\documentclass[11pt, reqno]{amsart}
\pdfoutput=1 
\usepackage[utf8x]{inputenc}
\usepackage[english]{babel}
\usepackage{graphicx}           
\usepackage{url}
\usepackage{eurosym}
\usepackage{ulem}
\usepackage{color} 
\usepackage[margin=3.3cm]{geometry}
\usepackage{setspace}
\usepackage{datetime}
\usepackage{pifont}
\usepackage{tikz}
\usepackage{natbib}
\usepackage{hyperref}
\usepackage{amsmath, amsthm,amsfonts,amssymb,latexsym,mathabx}

\newcommand{\C}{\mathcal{C}}			
\newcommand{\M}{\mathcal{M}}			
\newcommand{\X}{ {[0,M]} }

\newcommand{\DD}{\mathbb{D}}			
\newcommand{\EE}{\mathbb{E}}			
\newcommand{\PP}{\mathbb{P}}			
\newcommand{\RR}{\mathbb{R}}			

\newcommand{\taudiv}{b}							

\newcommand{\mmax}{M}    						
\newcommand{\rhog}{g}							
\newcommand{\Sin}{{\mathbf s}_{\textrm{\tiny\rm in}}} 
\newcommand{\gau}{G}

\renewcommand{\H}{\mathcal{H}}
\newcommand{\nub}{\widebar\nu}				
\newcommand{\intX}{\int_\X}

\newcommand{\dif}{\mathrm{d}}
\newcommand{\rmd}{{{\textrm{\upshape d}}}}

\newcommand{\norme}[1]{\left\Vert #1 \right\Vert }
\newcommand{\crochet}[1] {\langle #1 \rangle}
\newcommand{\trace}[1]{\crochet{\crochet{#1}}}

\newcommand{\ps}[2]{#1 \left(#2\right)}

\newtheorem{theorem}      {Theorem}[section]

\newtheorem{lemma}        [theorem]{Lemma}
\newtheorem{remark}       [theorem]{Remark}
\newtheorem{hypotheses}   [theorem]{Assumptions}


\usepackage{ifthen}    
\newboolean{showComments}        
\setboolean{showComments}{true}  

\begin{document}

\title[Gaussian approximations for chemostat models]{Gaussian approximations for chemostat models in finite and infinite dimensions}

\author[Bertrand Cloez \and Coralie Fritsch]{Bertrand Cloez$^{1}$ \and Coralie Fritsch$^{2,3,4}$}

\footnotetext[1]{INRA Montpellier 
UMR MISTEA , 
2 place Pierre Viala 
34060 Montpellier, France}


\footnotetext[2]{CMAP, \'Ecole Polytechnique, UMR CNRS 7641, route de Saclay, 91128 Palaiseau Cedex, France}

\footnotetext[3]{Universit\'e de Lorraine, Institut Elie Cartan de Lorraine,
    UMR CNRS 7502, 54506 Vand\oe uvre-l\`es-Nancy, France}

\footnotetext[4]{Inria, TOSCA, 54600 Villers-l\`es-Nancy, France\protect \\ 
				E-mail: {bertrand.cloez@supagro.inra.fr}, {coralie.fritsch@inria.fr}}

\begin{abstract}
In a chemostat, bacteria live in a growth container of constant volume in which liquid is injected continuously. Recently, Campillo and Fritsch introduced a mass-structured individual-based model to represent this dynamics and proved its convergence to a more classic partial differential equation. 

In this work, we are interested in the convergence of the fluctuation process. We consider this process in some Sobolev spaces and use central limit theorems on Hilbert space to prove its convergence in law to an infinite-dimensional Gaussian process.

As a consequence, we obtain a two-dimensional Gaussian approximation of the Crump-Young model for which the long time behavior is relatively misunderstood. For this approximation, we derive the invariant distribution and the convergence to it. We also present numerical simulations illustrating our results.\\

\paragraph{Keywords:}
chemostat model,
central limit theorem on Hilbert-space,
individual-based model,
weak convergence,
Crump-Young model,
stationary and quasi-stationary distributions

\paragraph{Mathematics Subject Classification (MSC2010):} 
60F05, 92D25, 60J25, 60G57, 60B10, 60H10
\end{abstract}

\maketitle


\section{Introduction}

The chemostat is a biotechnological process of continuous culture developed by \cite{monod1950a} and \cite{novick1950a} in which bacteria live in a growth container of constant volume in which liquid is continuously injected.

From a mathematical point of view, beyond classic models based on systems of ordinary differential equations  (see for instance \cite{smith1995a})  or integro-differential equations (see for instance \cite{fredrickson1967a,ramkrishna1979a,ramkrishna2000a}), several stochastic models were introduced in the literature. The first-one seems to be the one developed by  \cite{crump1979a} and is a birth and death process for the biomass growth coupled with a differential equation for the substrate evolution. This one is the main object of interest in Section \ref{sect:crump-young} below.  Recently,  \cite{campillo2011chemostat} and \cite{collet2013} studied some extensions of this model. In particular, \cite{campillo2011chemostat} propose some stochastic differential equations to model the demographic noise from the microscopic interactions.

Other stochastic models were introduced by \cite{stephanopoulos1979a,imhof2005a}. Let us also mention \cite{diekmann-odo2005a,mirrahimi2012a,mirrahimi2014a} or, for 
individual-based models, \cite{campillo2016a, campillo2016b, champagnat2014a, fritsch2016a} which model the evolutionary dynamics of the chemostat.

We focus here on the individual-based model  developed by \cite{campillo2014a} and \cite{fritsch2015a}. In this mass-structured model, the bacterial population is represented as a set of individuals growing in a perfectly mixed vessel of constant volume. This representation combines discrete mechanisms (birth and death events) as well as continuous mechanisms (mass growth and substrate dynamics).  \cite{campillo2014a} set the exact Monte Carlo simulation algorithm of this model and its mathematical representation as a stochastic process. They prove the convergence of this process to the solution of an integro-differential equation when the population size tends to infinity. 
In the present work, we investigate the study of the fluctuation process; namely the difference between the measure-valued stochastic process and its deterministic approximation. We first show that,
conveniently normalized, this fluctuation process converges to some superprocess. Our proof is based on a classic tightness-uniqueness argument in infinite dimension. In contrast with \cite{fournier2004a, campillo2014a, haskovec2011a}, one difficulty is that the main process is a signed measure and we have to find a suitable space in which
it, as well as its limit, are to be immersed (because the space of signed measures endowed with 
the weak convergence is not metrizable). Inspired by \cite{meleard1998} and \cite{vietchitran2006a}, we consider the fluctuation process as an element of some Sobolev space (see Section \ref{sect:spaces} for a description of this space). This type of spaces takes the advantage to be Hilbertian and one can use martingale techniques on Hilbert spaces to obtain the tightness (and then the convergence of this process); see for instance \cite{metivier1984a}. The limit object that we obtain is then an infinite dimensional degenerate Gaussian process. 

We illustrate the interest of this result applying it in finite dimension. More precisely, for particular parameters, the mass-structured model of \cite{campillo2014a} can be reduced to the two-dimensional Crump-Young model. As pointed out by \cite{collet2013}, the long time behavior of this model is complex and misunderstood; only few properties are known about the behavior before extinction. The convergence developed by \cite{campillo2014a} induces an approximation by an ordinary differential equation of the Crump-Young model, whereas our main result allows to obtain a stochastic differential approximation for which we are able to plainly describe the long-time behavior.

\medskip

Our main results are described in section that follows: Theorems \ref{th:tcl-intro} and \ref{th:tcl-CY} are the central limit theorems (convergence of the fluctuation processes) in infinite and finite dimensions. Theorem \ref{th:intro-CY} gives the long time behavior of a stochastic differential approximation of the Crump-Young model.
Section \ref{sect:CLT} is devoted to the proofs of the two central limit theorems. We first, introduce all the notations and preliminaries that we need from Section \ref{sect:spaces} to Section \ref{subsec.pre.res}, then Theorem \ref{th:tcl-intro} is proved in Section \ref{sec.proof.TCL}. The main steps of the proof of Theorem \ref{th:tcl-CY} are given in Section \ref{sec.proof.th:tcl-CY}.
The finite-dimensional case is studied in Section \ref{sect:crump-young}. We prove the convergence in time of the stochastic differential approximation of the Crump-Young model in Section \ref{sect:proofCY}. We present numerical simulations and discussion illustrating our results in Section \ref{subsec.num.sim}. In particular, we discuss about the validity of the approximation and introduce another diffusion process, obtained from Theorem \ref{th:tcl-CY}, whose numerical behavior seems to have a better mimic of the Crump-Young model in some particular situations.
The extinction time of this new process is studied in Section \ref{subsec.extin.newSDE}.

\subsection*{Main results}
Let us be more precise on our main results before to introduce all the machinery (notations, Sobolev spaces, ...) that we will use.

We consider the following mass-structured chemostat model : each individual is characterized by its mass $x\in \X$, where $\mmax$ is the maximal mass of a bacterium. At each time $t\geq 0$, the system is characterized by the random variable $(S^n_t,\, \nu^n_t)$, where $S^n_t$ is the substrate concentration and $\nu^n_t=\sum_{i=1}^{N_t^n}\delta_{X_t^i}$ is the population of the $N_t^n$ individuals with mass $X_t^1,\cdots,\,X_t^{N_t^n}$ in the chemostat at time $t$. The parameter $n$ represents a scaling parameter.

\medskip

We assume that one individual with mass $x\in \X$ 
\begin{itemize}
\item divides at rate $b(S,x)$ into two individuals with masses $\alpha\,x$ and $(1-\alpha)\,x$ where $\alpha$ is distributed according to a probability distribution $Q(\dif \alpha)$ on $[0,1]$;
\item is withdrawn from the chemostat at rate $D$, with $D$ the dilution rate of the chemostat;
\item grows at speed $g(S,x)$ : $\dot x_t = g(S^n_t,x_t)$,
\end{itemize}
where the substrate concentration $S^n$ evolves according to the following equation
$$
	\dot S^n_t = D\,(\Sin-S^n_t)-\frac{k}{n\,V}\,
	\sum_{i=1}^{N^n_t}g(S^n_t,X^i_t)\,,
	\qquad  S^n_0=S_0\,,
$$
where $\Sin$ is the input substrate concentration in the chemostat, $S_0$ is a deterministic initial substrate concentration, $n\,V$ is the volume of the chemostat and $k$ is a stoichiometric coefficient. Note that the scale parameter $n$ is only involved in front of the volume and the initial number of individuals. The approximations below then holds when the volume and the initial population become larger and larger. In this context, let us do a small remark on modelling. Parameter $D$ corresponds to a dilution rate, which is usually defined as the ratio between the flow and the volume. As we assume that the dilution rate is constant, approximations below only hold when also the flow became larger and larger.

 A more complete description of the stochastic process is given in Section \ref{sect:spaces} in term of martingale problem. To have a better understanding of the dynamics let also see \cite[Section 2.2]{campillo2014a}.

For every $n\geq 1$, we consider the renormalized process $(\widebar \nu^n_t)_{t\geq 0}$ defined by
\begin{align}
\label{def.nubar}
	\widebar \nu^n_t = \frac{1}{n}\,\nu^n_t, \quad \ t\geq 0
\end{align}
and we make the following assumptions.
\begin{hypotheses}[Regularity of the division rate and the growth speed]\ 
\label{hyp.model}
\begin{enumerate}
\item \label{hyp.lipch} The functions $ (s,x) \mapsto g(s,x)$ and $(s,x) \mapsto  \taudiv(s,x)$ are Lipschitz continuous w.r.t. $s$ uniformly in $x$ and differentiable in $s$ with derivative Lipschitz continuous w.r.t. $s$ uniformly in $x$: for all $s_1,s_2\geq 0, \ x\in [0,M]$,
$$ \begin{array}{cc}
              |g(s_1,x)-g(s_2,x)|\leq K_{gb} \, |s_1-s_2|; & |\taudiv(s_1,x)-\taudiv(s_2,x)|\leq K_{gb} \, |s_1-s_2|  ;       \\
              |\partial_s g(s_1,x)-\partial_s g(s_2,x)|\leq K_{gb} \, |s_1-s_2|;   &  |\partial_s \taudiv(s_1,x)-\partial_s \taudiv(s_2,x)|\leq K_{gb} \, |s_1-s_2 |.   
          \end{array}$$
\item \label{hyp.g} The function $g \in C^{1,1}(\RR_+\times \X)$ is such that $	g(s,0)=g(s,\mmax)=0\,.$
\item In absence of substrate the bacteria do not grow, i.e. $g(0,x)=0$ for all $x\in \X$.
\end{enumerate}
\end{hypotheses}

Note that due to the form of the differential equation satisfied by the substrate concentration $(S^n_t)_{t\geq 0}$, one can see that it remains in the compact set $[0, \max(S_0,\Sin)]$. As a consequence, the regularity of the functions $g$ and $b$, induced by Assumptions \ref{hyp.model}, implies that the division rate and the growth speed are bounded : 
$$0\leq \taudiv(s,x)\leq \widebar \taudiv, \qquad 0\leq g(s,x)\leq \widebar g, \qquad s\geq 0, \ x\in [0,M] .$$

With these assumptions, \cite{campillo2014a} show that if the sequence $(\widebar\nu_0^n)_n$ converges in distribution towards a deterministic, finite and positive measure $\xi_0$ 
then, under Assumptions \ref{hyp.model}, the following limit holds in distribution (see Section \ref{sect:spaces} for details about the topology),
\begin{equation}
\label{eq:cvEID}
\lim_{n \to \infty} (S_t^n,\nub_t^n)_{t\in [0,T]} = (S_t,\xi_t)_{t\in [0,T]}, 
\end{equation}
for any horizon time $T>0$, where $(S_t,\xi_t)_{t\in [0,T]}$ is the solution of the deterministic system of equations
\begin{equation}
\label{eq.limite.eid.faible}
     \begin{cases}
	S_t 	
	&=	S_{0}
	+
	\int_{0}^t\biggl[
	D\,(\Sin-S_u)-\frac kV \int_\X \rhog(S_u,x)\,\xi_u(\dif x)
	\biggr]\,\rmd u\,,
\\[1em]
 
	\ps{\xi_t}{f}
	&=
	\ps{\xi_0}{f}
	+
	\int_{0}^t \int_{\X} 
		\biggl[
			\taudiv(S_u, x)\,   \int_0^1
				\Bigl[f(\alpha\, x)+ f((1-\alpha)\,x)-f(x)\Bigr] \, 	 			
				Q(\dif \alpha)
\\
	&\qquad\qquad\qquad\qquad
	 - D\, f(x)+\rhog(S_u,x)\,f'(x)\biggr] \,	\xi_u(\dif x) 
	\,\rmd u\,,
     \end{cases}
\end{equation}
for any $f \in C^{1}(\X), t\geq 0$, with, for any $h\in \C(\X)$ and $\nu$ in the set $\M_F(\X)$ of finite (positive) measures,
\begin{align}
\label{def.ps}
	\ps{\nu}{h} := \int_\X h(x)\,\nu(\dif x)\,.
\end{align}

Let us finally introduce the main object of the present article, that is the fluctuation process $\widebar\eta^n_t = (\eta^n_t,R^n_t)$ defined by
\begin{align}
\label{eq:fluctuation}
\eta_t^n := \sqrt{n}\,\left(\nub_t^n-\xi_t \right), \qquad
R^n_t : = \sqrt{n} \, (S^n_t-S_t)\,.
\end{align}

Our main result is Theorem \ref{th:tcl-intro} below. 
For presentation convenience, we don't detail here the topology of $\DD([0,T],\H)$, $\H$ or $\H_0$ but all details are given in Section \ref{sect:spaces}, in particular $\H$ and $\H_0$ are defined in \eqref{def.H} (see also Remark \ref{remark.generalisation}). Briefly $\DD([0,T],\H)$ is the Skohorod space associated to an appropriately chosen Sobolev space $\H$ and $\H_0 \subsetneq \H$.

\begin{theorem}[Convergence of the fluctuation process]
\label{th:tcl-intro}
Under Assumption \ref{hyp.model} and if $\sup_{n \geq 1} \EE \left(\norme{\widebar \eta_0^n}^2_{\H_0} \right) < \infty$  and $(\widebar \eta^n_0)_{n \geq 1}$ converges to some $\widebar \eta_0 = (\eta_0,0)$ in $\mathcal{H}$ then, for any horizon time $T>0$, the sequence of process $(\widebar \eta^n)_{n \geq 1}$ converges in distribution in $\DD([0,T],\H)$ towards $\widebar \eta=(\eta,R)$ solution of the system

\begin{equation}
\label{eq.limite.tcl}
     \begin{cases}
	\ps{\eta_t}{f}
	&= 
	\ps{\eta_0}{f}
	+
	\int_0^t \intX \biggl[
	\taudiv(S_u,x) \, 
	 \int_0^1  
			  \bigl[ f(\alpha \, x)+f((1-\alpha)\,x)-f(x)\bigr] \, 
			  Q(\dif \alpha)
\\
	&\qquad	\qquad\qquad	\qquad 
		-D\, f(x) 
		+\rhog(S_u,x)\,f'(x) 
		\biggr]	  
		\,\eta_u(\dif x) \, \dif u
\\
	&\quad
	+
	\int_{0}^t R_u  \, \intX 
	\bigg[
	\partial_s\taudiv(S_u,x)\, 
	 \int_0^1
	 	\bigl[f(\alpha\, x)+ f((1-\alpha)\,x)-f(x)\bigr] \, 	 			
			Q(\dif \alpha) 
\\
	& \qquad \qquad \qquad \qquad
	+ 
   		\partial_s \rhog(S_u,x) \,f'(x) 
   		\biggr]
	 		\, \xi_u(\dif x) \,\dif u + G_t(f)
\\
R_t & = 
		-\int_{0}^t\biggl[
			D\,R_u+
			\frac kV \, \ps{\eta_u}{\rhog(S_u,\cdot)}
			+ R_u \, \frac kV \, \ps{\xi_u}{\partial_s\rhog(S_u,\cdot)} 
		\biggr]\,\rmd u
		\end{cases}
\end{equation}

where $\gau(f)$ is a centred Gaussian process with quadratic variation
\begin{align}
\nonumber
\crochet{\gau(f)}_t
	&=
		\int_0^t \intX \taudiv(S_u,x) \, 
	 		\int_0^1  
			  \bigl[ f(\alpha \, x)+f((1-\alpha)\,x)-f(x)\bigr]^2 \, 
			  Q(\dif \alpha)\,\xi_u(\dif x) \, \dif u
\\
\label{crochet.martin}
	&\quad
		+
		D\, \int_0^t \intX f^2(x) \, \xi_u(\dif x) \, \dif u,
\end{align}
for any $f\in C^1(\X)$ and $t\in [0,T]$.
\end{theorem}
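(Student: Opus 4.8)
The plan is to follow the classical tightness–uniqueness route for convergence in law on the Skorokhod space $\DD([0,T],\H)$, as indicated in the introduction. First I would write the semimartingale decomposition of the fluctuation process: for each test function $f$, apply the martingale problem satisfied by $(S^n,\nu^n)$ to $\ps{\nu^n_t}{f}$, subtract the deterministic equations \eqref{eq.limite.eid.faible} satisfied by $(S_t,\xi_t)$, and multiply by $\sqrt n$. This yields $\ps{\eta^n_t}{f}$ as the sum of a finite-variation drift term, a martingale term $M^n_t(f)$, and a remainder coming from the difference $\taudiv(S^n_u,\cdot)-\taudiv(S_u,\cdot)$ and $\rhog(S^n_u,\cdot)-\rhog(S_u,\cdot)$. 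Using Assumption \ref{hyp.model}\eqref{hyp.lipch} (the $C^{1,1}$ regularity in $s$), a first-order Taylor expansion in $s$ around $S_u$ turns the leading part of this remainder into the $R^n_u\,\partial_s\taudiv(S_u,x)$ and $R^n_u\,\partial_s\rhog(S_u,x)$ terms of \eqref{eq.limite.tcl}, the second-order term being $O(\sqrt n\,(S^n_u-S_u)^2)$ which is controlled because it carries an extra factor $|R^n_u|/\sqrt n$. The same manipulation applied to $S^n_t$ gives the (deterministic, integral) equation for $R^n_t$.

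Next I would establish the a priori moment bounds needed for tightness: $\sup_n \EE\sup_{t\le T}\bigl(\norme{\eta^n_t}^2_{\H}+|R^n_t|^2\bigr)<\infty$, and more precisely the analogue in the larger space $\H_0$ (one works in a rigging $\H_0\hookrightarrow\H\hookrightarrow\H_{-1}$ of Sobolev spaces, as in \cite{meleard1998,vietchitran2006a}, so that the embeddings are Hilbert–Schmidt). This uses the hypothesis $\sup_n\EE\norme{\widebar\eta^n_0}^2_{\H_0}<\infty$, the boundedness of $\taudiv$ and $\rhog$, Doob's inequality for the martingale part together with the explicit form of its bracket $\crochet{M^n(f)}_t$ — which is exactly the right-hand side of \eqref{crochet.martin} with $\xi_u$ replaced by $\widebar\nu^n_u$, divided by $1$ after the $\sqrt n$ scaling — and Gronwall's lemma. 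For $R^n$ one uses that the coefficients in its equation are bounded and Lipschitz, so $|R^n_t|$ is controlled by $\int_0^t\bigl(|R^n_u|+\norme{\eta^n_u}_{\H}\bigr)\dif u$. Then I would prove tightness of $(\widebar\eta^n)_n$ in $\DD([0,T],\H)$ by the Hilbert-space criteria of \cite{metivier1984a} (or Aldous–Rebolledo): the drift terms are equicontinuous in $t$ by the moment bounds, and for the martingale part one checks the Aldous condition on its increasing process, again via boundedness of the coefficients; the Hilbert–Schmidt embedding $\H_0\hookrightarrow\H$ upgrades weak tightness to tightness in the strong topology of $\H$.

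The identification of limit points is the next step: along a convergent subsequence, pass to the limit in the semimartingale decomposition. The drift terms converge by continuity (using \eqref{eq:cvEID} and the regularity of $\taudiv,\rhog$ and their $s$-derivatives, plus the a priori bounds to justify uniform integrability); the quadratic variation of the martingale part converges to \eqref{crochet.martin} because $\widebar\nu^n_u\to\xi_u$ and, crucially, the jumps of $M^n(f)$ are of order $1/\sqrt n$ so the limit martingale is continuous, hence Gaussian with the prescribed deterministic bracket — this is where one invokes a martingale central limit theorem on Hilbert space. Finally, uniqueness in law of the solution of the linear system \eqref{eq.limite.tcl}: given $G$, the system is an affine (inhomogeneous linear) evolution equation driven by $G$ in the dual Sobolev space, whose coefficients $x\mapsto\taudiv(S_u,x),\rhog(S_u,x),\partial_s\taudiv(S_u,x),\partial_s\rhog(S_u,x)$ and the kernel $Q$ are fixed and regular, so a Gronwall estimate on the difference of two solutions in the $\H_{-1}$-norm gives pathwise uniqueness, hence uniqueness in law; combined with tightness this yields the claimed convergence.

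I expect the main obstacle to be the functional-analytic bookkeeping of the Sobolev rigging: choosing $\H$ and $\H_0$ large enough (i.e.\ with enough derivatives and polynomial weights) that (i) the atomic initial data $\widebar\nu^n_0$ and the signed measures $\eta^n_t$ genuinely lie in $\H$ with controlled norms, (ii) the generator terms — especially the transport term $\rhog(S_u,x)f'(x)$ and the fragmentation term involving $f(\alpha x)$ — map $\H$ continuously into itself (or into $\H_{-1}$) with the estimates above, and (iii) the embeddings in the Gelfand triple are Hilbert–Schmidt so that the abstract Hilbert-space tightness and martingale CLT theorems apply. The Taylor-remainder control of the $S^n-S$ dependence and the continuity-of-jumps argument are comparatively routine once the right spaces are fixed.
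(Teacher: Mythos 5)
Your proposal is correct and follows essentially the same route as the paper: semimartingale decomposition with a first-order Taylor expansion in $s$ producing the $R_u\,\partial_s\taudiv$ and $R_u\,\partial_s\rhog$ terms (remainder of order $(R^n_u)^2/\sqrt{n}$), uniform second-moment bounds in the stronger space $\H_0$ via Doob's inequality on the trace of the martingale and Gronwall, tightness in $\DD([0,T],\H)$ from the Hilbert--Schmidt embedding and an Aldous-type criterion, identification of the limit martingale as the Gaussian process $G$ via convergence of the brackets and the $O(1/\sqrt{n})$ jump bound (Jacod--Shiryaev), and uniqueness by Gronwall. The only cosmetic difference is that the paper works with just the two unweighted spaces $W_2^*\times\RR\hookrightarrow W_3^*\times\RR$ (the compactness of $[0,M]$ making weights unnecessary), rather than a three-space rigging.
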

Recall that the notations 
$$\ps{\eta_u}{\rhog(S_u,\cdot)}=\int_{[0,M]} \rhog(S_u,x)\,\eta_u(\dif x)\,,\qquad \ps{\xi_u}{\partial_s\rhog(S_u,\cdot)}=\int_{[0,M]}\partial_s\rhog(S_u,x)\,\xi_u(\dif x)$$ have been defined in \eqref{def.ps}.

\medskip

This theorem may look complicated but let us illustrate the interest of this type of result with a finite dimensional application. Let us choose
\begin{equation}
\label{eq:parameterCY}
M= \infty, \quad g(s,x)=\mu(s) m, \quad b(s,x) = \mu(s),
\end{equation}
where $s \mapsto \mu(s)$ is  the specific growth rate of the population that we will assume to be Lipschitz. Even though the previous assumptions are not included in the set of assumptions of Theorem \ref{th:tcl-intro} (see however remark \ref{rq:extension}), we can obtain the same result for specific functions $f$, in particular, when $f\equiv 1$ (see Theorem \ref{th:tcl-CY} below and its proof in Section \ref{sec.proof.th:tcl-CY}).

For parameters \eqref{eq:parameterCY}, the substrate concentration of the stochastic model satisfies
$$
	\frac{\dif}{\dif t} S^n_t = D\,(\Sin-S^n_t)-\frac{k}{V\,n}\,m\,\mu(S^n_t)\,N^n_t
$$
where the process $(N^n_t)_t$, depicting the number of individuals, is a birth-death process with non-homogeneous birth rate $\mu(S^n_t)$ and death rate $D$. It is exactly the Crump-Young model as studied in \cite{campillo2011chemostat,collet2013, crump1979a}. In particular, the long time behavior of this process is investigated in \cite{collet2013}. It is shown that this process extincts after a random time and, under suitable assumptions ($\mu$ increasing,...), admits (at least) a quasi-stationary distribution. This distribution describes the behavior of the process before the extinction (when it is unique and there is convergence to it); see for instance \cite{CMS13}. 

Let us define 
$$
\widebar N^n_t = \widebar{\nu}^n_t(1)=\frac{N^n_t}{n}, \quad N_t= \xi_t(1), 
\quad Q^n_t =\sqrt{n}(\widebar N^n_t-N_t).
$$
 We have then the following result.

\begin{theorem}[Convergence of the Crump-Young fluctuation process]
\label{th:tcl-CY}
If $\sup_{n\geq 1} \EE(|\widebar N_0^n|^2 +|Q_0^n|^2 )<+\infty$ and the sequence of random variables $(\widebar N_0^n,Q_0^n)_{n\geq 1}$ converges in distribution towards $(N_0,Q_0)$ then the sequence of processes $((\widebar N_t^n,S^n_t, Q_t^n, R_t^n)_{t\geq 0})_{n \geq 1}$ converges in distribution in $\DD([0,T],\RR^{4})$ towards $(N,S,Q,R)$ solution of the following system of stochastic differential equations:
\begin{equation}
\label{eq:CYEDS}
     \begin{cases}
\dif N_t &= (\mu(S_t)-D)\,N_t\,\dif t\,, \\
\dif S_t &= \left[D\,(\Sin-S_t)-\frac{k}{V}\,m\,\mu(S_t)\, N_t \right] \dif t\,,\\
\dif Q_t 	&= \left[ (\mu(S_t)-D) Q_t + \mu'(S_t) R_t N_t \right] \dif t	+ 
	\sqrt{(\mu(S_t)+D)\,N_t}\,\dif B_t\,, \\
\dif R_t &= -\left[	D\,R_t+	\frac kV \, \mu(S_t)\,m\,Q_t	+ \frac kV \,R_t \, \mu'(S_t)\,m\,  N_t \right] \dif t\,.
     \end{cases}
\end{equation}
for all $t\geq 0$, where $(B_t)_{t\geq 0}$ is a classic Brownian motion. 
\end{theorem}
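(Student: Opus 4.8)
The plan is to run the usual tightness--identification--uniqueness scheme directly in $\DD([0,T],\RR^4)$; this parallels the proof of Theorem~\ref{th:tcl-intro} but is considerably lighter, the limit being finite-dimensional, and one may think of it as specialising the general argument to constant test functions (for $f\equiv 1$ one has $f(\alpha x)+f((1-\alpha)x)-f(x)=1$ and $f'\equiv 0$, which yields exactly the coefficients of \eqref{eq:CYEDS}; see also Remark~\ref{rq:extension} concerning the mismatch with Assumption~\ref{hyp.model}). First I would write the semimartingale decompositions. Since $S^n_t$ stays in the compact interval $[0,\max(S_0,\Sin)]$, the growth rate $\mu$ is bounded there by some $\widebar\mu$, so $N^n$ is stochastically dominated by a pure-birth (Yule) process of per-capita rate $\widebar\mu$, whence $\sup_n\EE[\sup_{t\le T}(\widebar N^n_t)^2]<\infty$ using the hypothesis $\sup_n\EE|\widebar N^n_0|^2<\infty$. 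Compensating the birth--death dynamics gives
\[
\widebar N^n_t=\widebar N^n_0+\int_0^t(\mu(S^n_u)-D)\,\widebar N^n_u\,\dif u+M^n_t,\qquad
\crochet{M^n}_t=\frac1n\int_0^t(\mu(S^n_u)+D)\,\widebar N^n_u\,\dif u,
\]
while $S^n$ solves the stated ODE and $R^n_0=\sqrt n(S^n_0-S_0)=0$. Subtracting the limit equations for $(N,S)$, multiplying by $\sqrt n$ and expanding $\mu$ to first order around $S_u$ yields
\[
\begin{cases}
Q^n_t=Q^n_0+\displaystyle\int_0^t\Bigl[(\mu(S_u)-D)\,Q^n_u+\mu'(S_u)\,N_u\,R^n_u+\varepsilon^n_u\Bigr]\dif u+\sqrt n\,M^n_t,\\[1em]
R^n_t=-\displaystyle\int_0^t\Bigl[D\,R^n_u+\frac kV\,m\bigl(\mu(S_u)\,Q^n_u+\mu'(S_u)\,N_u\,R^n_u+\widetilde\varepsilon^n_u\bigr)\Bigr]\dif u,
\end{cases}
\]
where the remainders $\varepsilon^n_u,\widetilde\varepsilon^n_u$ are, up to bounded factors, of the form $\bigl(\mu(S^n_u)-\mu(S_u)\bigr)Q^n_u+N_u\,\sqrt n\bigl(\mu(S^n_u)-\mu(S_u)-\mu'(S_u)(S^n_u-S_u)\bigr)$ and, using $\mu\in C^1$, are dominated by $C\,\|S^n-S\|_{\infty,T}\,|Q^n_u|+\omega(\|S^n-S\|_{\infty,T})\,|R^n_u|$ with $\omega(r)\to 0$ as $r\to 0$.

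Next I would establish a uniform moment bound and tightness. The law of large numbers \eqref{eq:cvEID} gives $\widebar N^n\to N$ and $S^n\to S$ uniformly on $[0,T]$ in probability, so $\|S^n-S\|_{\infty,T}\to 0$; combining Doob's maximal inequality (which gives $\EE[\sup_{s\le t}(\sqrt n M^n_s)^2]\le 4\,\EE\bigl[\int_0^t(\mu(S^n_u)+D)\widebar N^n_u\,\dif u\bigr]\le C$) with Gronwall's lemma applied to $t\mapsto\EE[\sup_{s\le t}(|Q^n_s|^2+|R^n_s|^2)]$, the remainders being absorbed by the Gronwall argument, yields $\sup_n\EE[\sup_{t\le T}(|Q^n_t|^2+|R^n_t|^2)]<\infty$. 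Tightness of $(\widebar N^n,S^n,Q^n,R^n)$ in $\DD([0,T],\RR^4)$ then follows from the Aldous--Rebolledo criterion: the finite-variation parts are Lipschitz in time up to bounded random constants, the bracket $\crochet{\sqrt n M^n}$ is uniformly controlled, and the jumps of $\sqrt n M^n$ are of size $1/\sqrt n\to 0$.

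It remains to identify the limit and prove uniqueness. Along any convergent subsequence the limit of $(\widebar N^n,S^n)$ is $(N,S)$, the unique solution of the closed ODE formed by the first two lines of \eqref{eq:CYEDS}. Passing to the limit in the equations for $(Q^n,R^n)$, the remainders vanish, the drifts converge by continuity, and $\sqrt n M^n$ converges to a continuous square-integrable martingale $\mathcal M$ with $\crochet{\mathcal M}_t=\int_0^t(\mu(S_u)+D)N_u\,\dif u$ --- continuous because the jumps vanish, and the bracket passing to the limit since $\crochet{\sqrt n M^n}_t=\int_0^t(\mu(S^n_u)+D)\widebar N^n_u\,\dif u\to\int_0^t(\mu(S_u)+D)N_u\,\dif u$. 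By the martingale representation theorem, $\mathcal M_t=\int_0^t\sqrt{(\mu(S_u)+D)N_u}\,\dif B_u$ for a Brownian motion $B$ on a possibly enlarged probability space, so every limit point solves \eqref{eq:CYEDS}. Finally \eqref{eq:CYEDS} is well posed: its first two equations admit a unique solution $(N,S)$, bounded on $[0,T]$, and given it, $(Q,R)$ solves a linear SDE with bounded continuous time-dependent coefficients and additive noise, hence is pathwise unique. Uniqueness in law of the limit upgrades subsequential convergence to convergence of the whole sequence.

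The hard part is the interplay between the \emph{a priori} uniform second-moment bound on $(Q^n,R^n)$ and the control of the linearisation remainders $\varepsilon^n,\widetilde\varepsilon^n$: one must bootstrap the estimate through Gronwall while simultaneously verifying that these remainders are negligible, which relies on both the $C^1$-regularity of $\mu$ (needed anyway to make sense of $\mu'$ in \eqref{eq:CYEDS}) and the preliminary uniform convergence $S^n\to S$ provided by the law of large numbers. The rest is the classical scheme, here genuinely simpler than in the proof of Theorem~\ref{th:tcl-intro} because the state space is merely $\RR^4$.
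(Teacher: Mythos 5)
Your proposal is correct and follows essentially the same route as the paper: semimartingale decomposition with $f\equiv 1$, uniform second moments via Doob and Gronwall, tightness by the Aldous--Rebolledo criterion, identification of the martingale part through the vanishing jumps of order $1/\sqrt n$ and the convergence of the predictable brackets, linearisation of $\mu$ for the drift, and uniqueness of the limiting system. The only cosmetic differences are your Yule-process domination for the moment bound (the paper cites its earlier estimates) and your use of the martingale representation theorem where the paper invokes the functional martingale CLT of Jacod--Shiryaev directly.
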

The previous theorem suggests, if $n$ is sufficiently large, that
\begin{align*}
N_t^n \approx \widehat N_t^n := n\, N_t + \sqrt{n}\, Q_t, \quad 
S^n_t \approx \widehat S_t^n := S_t + \frac{1}{\sqrt{n}}\, R_t\,,
\end{align*}
with $(\widehat N_t^n, \widehat S_t^n)_{t\geq 0}$ solution of
\begin{align}
\label{SDE}
\begin{cases}
\dif \widehat N^n_t 
	&= 
		\big[(\mu(S_t)-D)\,\widehat N^n_t
			+ \mu'(S_t) (\widehat S^n_t-S_t)\,n\, N_t\big] \dif t
		+ \sqrt{(\mu(S_t)+D)\,n\,N_t}\,\dif B_t
\\
\dif \widehat S^n_t 
	&=
		\big[D\,(\Sin-\widehat S^n_t)-\frac{k}{V\,n}\,m\,\mu(S_t)\, \widehat N^n_t
			- \frac{k}{V\,n}\,\mu'(S_t) (\widehat S^n_t-S_t)\,n\, N_t \big] \dif t.
\end{cases}
\end{align}

Note that another (SDE type) approximation is given in Section \ref{subsec.num.sim}. This is a Feller-diffusion type approximation (see \cite{BM15}) and it is closer to the SDE introduced in \cite{campillo2011chemostat}.

The two first equations of \eqref{eq:CYEDS} are, up to a factor $m/V$ in front of $N_t$, the classic differential equations for representing the chemostat (see \cite{smith1995a}).
The four-component process is a non-elliptic diffusion time-homogeneous process whose long time behavior is given by Theorem \ref{th:intro-CY} below.

\begin{theorem}[Long time behavior of the Crump-Young SDE]
\label{th:intro-CY}
Assume that $\mu$ is strictly increasing on $[0, \Sin]$, $\mu(0)=0$ and $\mu(\Sin)>D$. There exists a unique $(N^*,S^*)$ such that 
$$
\mu(S^*)=D \,, \qquad
N^* = \frac{V }{km} (\Sin -S^*),
$$
and for any initial condition in $\mathbb{R}_+^*\times  \mathbb{R}_+ \times \mathbb{R} \times \mathbb{R}$, the process $((N_t,S_t,Q_t,R_t)^T)_{t\geq 0}$ ($T$ designs the transpose of the vector) converges in distribution to a Gaussian random variable with mean $\mathrm{m}$ and variance $\Sigma$ defined by
\begin{align}
\label{invariant.mesure}
\mathrm{m}=\begin{pmatrix}
   		N^*\\
   		S^* \\
   		0 \\
   		0	
	\end{pmatrix}, \qquad
\Sigma=	
	\begin{pmatrix}
   		0 & 0 & 0 & 0 \\
   		0 & 0 & 0 & 0 \\
   		0 & 0 & \alpha & -\frac{\mu(S^*)}{\mu'(S^*)} \\
   		0 & 0 & -\frac{\mu(S^*)}{\mu'(S^*)} & \beta \\
	\end{pmatrix},
\end{align}
where
\begin{align}
\label{def.alpha}
	\alpha=
	\frac{\left(\frac{k}{V}\,m\,\mu'(S^*)\,N^*+\frac 32 \, D\right)^2- \frac 54 \, D^2}
   			{\frac{k}{V}\,m\,\mu'(S^*)\,(D+\frac{k}{V}\,m\,\mu'(S^*)\,N^*)}
\end{align}
and
\begin{align}
\label{def.beta}
	\beta=\frac{k}{V}\,m\, \frac{D^2}{\mu'(S^*)\,(D+\frac{k}{V}\,m\,\mu'(S^*)\,N^*)} \,.
\end{align}
\end{theorem}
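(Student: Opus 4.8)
The plan is to analyze the four-dimensional diffusion \eqref{eq:CYEDS} by splitting it into its deterministic part $(N_t,S_t)$ and the linear "fluctuation" part $(Q_t,R_t)$, which is driven by $(N_t,S_t)$ only through time-dependent coefficients. First I would establish that the chemostat ODE system formed by the first two equations of \eqref{eq:CYEDS} converges to the unique equilibrium $(N^*,S^*)$ under the stated hypotheses ($\mu$ strictly increasing, $\mu(0)=0$, $\mu(\Sin)>D$). Existence and uniqueness of $(N^*,S^*)$ is immediate: $\mu(S^*)=D$ has a unique root in $(0,\Sin)$ by strict monotonicity, and then $N^*$ is determined. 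Global convergence from any initial condition in $\mathbb{R}_+^*\times\mathbb{R}_+$ is the classical result for the chemostat with monotone growth (see \cite{smith1995a}); one can also give a direct Lyapunov argument. In particular $N_t>0$ stays bounded away from $0$ on finite horizons, and $(N_t,S_t)\to(N^*,S^*)$ as $t\to\infty$.

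Next, observe that $(Q_t,R_t)$ solves a linear (non-autonomous) SDE: writing $X_t=(Q_t,R_t)^T$ we have $\dif X_t = A(t)\,X_t\,\dif t + \Gamma(t)\,\dif B_t$ where $A(t)$ and the (degenerate) diffusion coefficient $\Gamma(t)$ depend continuously on $(N_t,S_t)$ and hence converge, as $t\to\infty$, to the constant matrices $A_\infty$ and $\Gamma_\infty$ obtained by substituting $(N^*,S^*)$ and using $\mu(S^*)=D$. Explicitly,
\begin{align*}
A_\infty = \begin{pmatrix}
0 & \mu'(S^*)\,N^* \\
-\frac kV\,m\,D & -D-\frac kV\,m\,\mu'(S^*)\,N^*
\end{pmatrix},
\end{align*}
and $\Gamma_\infty\Gamma_\infty^T = \mathrm{diag}(2D\,N^*,0)$. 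One checks that $A_\infty$ is Hurwitz: its trace is $-D-\frac kV m\mu'(S^*)N^*<0$ and its determinant is $\frac kV m\,D\,\mu'(S^*)N^*>0$. I would then invoke a standard result on asymptotically autonomous linear SDEs (or argue directly via the variation-of-constants formula together with the exponential decay of the fundamental solution of $\dot\Phi = A(t)\Phi$, which follows from $A_\infty$ Hurwitz and $A(t)\to A_\infty$) to conclude that the law of $X_t$ converges to the centred Gaussian $\mathcal N(0,\Sigma_\infty)$, where $\Sigma_\infty$ is the unique solution of the Lyapunov equation
\begin{align}
\label{eq:lyapunov-plan}
A_\infty\,\Sigma_\infty + \Sigma_\infty\,A_\infty^T + \Gamma_\infty\Gamma_\infty^T = 0.
\end{align}
Combined with $(N_t,S_t)\to(N^*,S^*)$ deterministically, this gives convergence in distribution of the full vector to $\mathcal N(\mathrm m,\Sigma)$ with $\Sigma$ having the claimed block structure (the $N,S$ components degenerate, the $Q,R$ block equal to $\Sigma_\infty$).

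The remaining task is to verify that the solution of \eqref{eq:lyapunov-plan} is exactly the $2\times 2$ block $\begin{pmatrix}\alpha & -\mu(S^*)/\mu'(S^*)\\ -\mu(S^*)/\mu'(S^*) & \beta\end{pmatrix}$ with $\alpha,\beta$ given by \eqref{def.alpha}–\eqref{def.beta}; this is a direct (if slightly tedious) linear-algebra computation: writing $\Sigma_\infty=\begin{pmatrix}\alpha & \gamma\\\gamma&\beta\end{pmatrix}$, equation \eqref{eq:lyapunov-plan} yields three scalar equations, and using $\mu(S^*)=D$ one solves for $\gamma$ first (getting $\gamma=-D/\mu'(S^*)=-\mu(S^*)/\mu'(S^*)$ after noting $N^*$-cancellations), then for $\beta$, then for $\alpha$. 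I expect the main obstacle to be not the Lyapunov computation but the justification of the asymptotic-autonomy step: one must rule out that the slowly decaying discrepancy $A(t)-A_\infty$ prevents convergence of the covariance, and handle the fact that $\Gamma_\infty$ is degenerate (the noise enters only the $Q$-coordinate), so that controllability of the pair $(A_\infty,\Gamma_\infty)$ — equivalently, that $\Sigma_\infty$ is the \emph{unique} and the relevant limiting covariance — needs checking; here the off-diagonal entry $\mu'(S^*)N^*\neq 0$ of $A_\infty$ is what propagates the noise from $Q$ to $R$, so the Kalman rank condition holds and $\Sigma_\infty$ is positive definite, consistent with $\alpha\beta-\gamma^2>0$ which can be verified directly from \eqref{def.alpha}–\eqref{def.beta}.
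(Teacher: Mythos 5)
Your proposal is correct and follows essentially the same route as the paper: deterministic convergence of the chemostat ODE to $(N^*,S^*)$, then treatment of $(Q_t,R_t)$ as a linear non-autonomous Gaussian SDE whose mean and covariance converge because $A(t)\to A_\infty$ with $A_\infty$ Hurwitz (the paper handles the asymptotic-autonomy step you rightly flag as the main obstacle by citing a Ces\`aro-type theorem of Amato). The only divergence is in the final computation, where the paper evaluates $\int_0^\infty e^{A_\infty s}\,C_\infty C_\infty^T\,e^{A_\infty^T s}\,\dif s$ explicitly while you solve the equivalent Lyapunov equation; both yield \eqref{def.alpha}--\eqref{def.beta}.
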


Some extensions of this Theorem are given in Section \ref{sect:crump-young} such as the rate of convergence and non-monotonic growth rate. 
The last Theorem gives the heuristic that, until extinction and if $n$ and $t$ are sufficiently large, the discrete model is almost distributed as a normal distribution:
\begin{align}
\label{invariant.mesure2}
(N^n_t, S^n_t) \approx \mathcal{N}\left( \begin{pmatrix}
   		 n\, N^*\\
   		 S^* 	
	\end{pmatrix} , 
		\begin{pmatrix}
   	  n\, \alpha & -\frac{\mu(S^*)}{\mu'(S^*)} \\
   	 -\frac{\mu(S^*)}{\mu'(S^*)} & \frac{\beta}{n} \\
	\end{pmatrix}
	\right).
\end{align} 

As one would expect, the number of bacteria is negatively correlated to the substrate rate: more individuals implies less food and vice versa. Recall that $(S_t^n + \frac{km}{V\,n} N^n_t)_{t\geq 0}$ is a martingale (\textit{i.e.} the total mass is in mean conserved in the container).

This theorem can be understood as a first step to fully describe the Crump-Young model such as in the case of the logistic model described in \cite{chazottes2015a}. Indeed, in Section \ref{subsec.num.sim}, we will see that, in large population, the quasi-stationary distribution of the Crump-Young model matches with the stationary distribution of its approximation. This is not trivial (and also not proved) because, for instance, the limits when $n\to \infty$ and $t\to \infty$ do not even commute! 
An example with a non-monotonic $\mu$ with different behaviors (several invariant measures, behavior depending on the initial conditions) is also presented.

\section{Central limit theorems}
\label{sect:CLT}

\subsection{Functional notations}
\label{sect:spaces}

For any $n\geq 1$ and $t\geq 0$, the population of bacteria is represented by the punctual measure $\nu^n_t= \sum_{i=1}^{N^n_t} \delta_{X^i_t}$. We denote by $\mathcal{M}(\X)$ the set of such measures (punctual measures), it is a subset of the set $\M_F(\X)$ of finite (positive) measures.

For any $n\geq 1$ and $T>0$, the process $(\nu^n_t)_{t\in [0,T]}$ is a càd-làg process. It (almost-surely) belongs to the space $\DD([0,T],\M_{F}(\X))$ of càd-làg functions of $[0,T]$ with values in $\M_{F}(\X)$, endowed with the (usual) Skohorod topology; see for instance \cite{billingsley1968a,ethier1986a} for an introduction. In contrast, $(S^n_t)_{t\in [0,T]}$ is (almost surely) a continuous function. We denote by $\C([0,T],\RR_{+})$ the set of continuous functions from $[0,T]$ to $\RR_+$ endowed with its usual topology.

The convergence \eqref{eq:cvEID} corresponds to a convergence in distribution in the product space $\C([0,T],\RR_{+}) \times \DD([0,T],\M_{F}(\X))$. Roughly this convergence is proved by a compactness/uniqueness argument. The compactness (or tightness) is proved by the (well-known) Aldous criterion which is a stochastic generalisation of the Arzel\`{a}-Ascoli Theorem. One of the key assumption of this theorem is to work in metric space. Considering the fluctuation process $(\eta^n_t)_{t \geq 0}$, such arguments cannot be used to establish any convergence. Indeed, in contrast to the measure $\bar \nu^n_t$, the measure $\eta^n_t$ is not a positive measure but it is a signed measure. The set of signed measures being not metrisable
\citep{Varadarajan1958}, one has to consider $\eta^n_t$ as an operator acting on a different space than those of continuous and bounded functions.
As \cite{meleard1998} and \cite{vietchitran2006a}, we will use some Sobolev spaces that are defined as follows: for every integer $j$, we let $\C^j(\X)$ be the set of functions being $j$ times continuously differentiable endowed with the norm $\Vert \cdot \Vert_{\C^j}$, defined for all $f \in \C^j(\X)$ by $\Vert f \Vert_{\C^j} = \sum_{i=0}^j \Vert f^{(i)} \Vert_\infty$ (with $\Vert \cdot \Vert_\infty$ the infinity norm). Let now $\norme{\cdot}_{W_j} $ be the norm defined, for any $f \in \C^j(\X)$, by 
\begin{align*}
	\norme{f}^2_{W_j} := \intX \sum_{i=0}^j (f^{(i)}(x))^2\,\dif x . 
\end{align*}
Let $W_j=W_j(\X)$ be the completion of $\C^j(\X)$ with respect to this norm (note that it is the classical Sobolev space associated to the classical norm $\norme{\cdot}_{L^2}$). Contrary to the Banach space $\C^j(\X)$, the Sobolev space $W_j$ is Hilbertian. 
Let $W_j^*$ be its dual space, classically endowed with the norm
\begin{align*}
\norme{\mu}_{W_j^*}=\sup_{\norme{f}_{W_j}\leq 1} |\ps{\mu}{f}|\,.
\end{align*}
Another useful property is given by the Sobolev-type inequalities: there exist universal constants $C_j,C'_j$ such that
\begin{align}
\label{ineg.normes}
\norme{f}_{W_j} \leq C_j\, \norme{f}_{\C_j}, \quad \norme{f}_{\C_j}  \leq C'_j\,\norme{f}_{W_{j+1}}.
\end{align}

See for instance \cite[Equations (3.5) and (3.6)]{meleard1998} or \cite[Theorem V-4]{Adams}. In particular, $W_{j+1}$ is continuously embedded in $W_{j}$. Moreover, this embedding is a Hilbert-Schmidt embedding (see \cite[Equation (3.7)]{meleard1998} 
or \cite[Theorem VI-53]{Adams}). Therefore bounded and closed sets of $W_{j+1}$ are compact for the $W_j$'s topology.

Let us illustrate an application of inequalities \eqref{ineg.normes} that will be useful in the proof of Theorem \ref{th:tcl-intro}. 

\begin{lemma}[Useful bound on the basis] Let $(e_k)_{k\geq 0}$ be an orthonormal basis of $W_2$, we have,
\label{lemma.somme.ek}
$$
K_1 := \sup_{x\in[0,\mmax]} \sum_{k \geq 0} e_k^2(x)<+ \infty\,.
$$
\end{lemma}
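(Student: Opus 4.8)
The plan is to exploit the Sobolev embedding inequalities \eqref{ineg.normes} to control the pointwise values of the basis functions, and then to use the fact that an orthonormal basis of a separable Hilbert space can be summed against itself in a controlled way when it embeds Hilbert-Schmidt into a space of continuous functions. More precisely, first I would observe that by the second inequality in \eqref{ineg.normes} applied with $j=1$, for any $f\in W_2$ we have $\norme{f}_\infty \leq \norme{f}_{\C^1} \leq C'_1 \norme{f}_{W_2}$, so evaluation at a point $x\in[0,M]$ is a bounded linear functional on $W_2$: there is $\delta_x \in W_2^*$ with $\ps{\delta_x}{f} = f(x)$ and $\norme{\delta_x}_{W_2^*} \leq C'_1$.

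Next, for a fixed $x$, since $(e_k)_{k\geq 0}$ is an orthonormal basis of $W_2$ and $f\mapsto f(x)$ is a bounded functional, Parseval's identity applied to the Riesz representative of $\delta_x$ gives
\begin{align*}
\sum_{k\geq 0} e_k^2(x) = \sum_{k\geq 0} \ps{\delta_x}{e_k}^2 = \norme{\delta_x}_{W_2^*}^2 \leq (C'_1)^2.
\end{align*}
Taking the supremum over $x\in[0,M]$ yields $K_1 \leq (C'_1)^2 < +\infty$, which is the claim. I would note here that this argument does not even need the Hilbert-Schmidt refinement of the embedding; the boundedness of point evaluations on $W_2$ (equivalently, $W_2 \hookrightarrow \C^0$ with bounded — indeed continuous — embedding) suffices, because the quantity $\sum_k \ps{\delta_x}{e_k}^2$ is just the squared $W_2^*$-norm of $\delta_x$ and is therefore independent of the choice of orthonormal basis.

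The only point requiring a little care is the interchange of sum and the identification $\sum_k \ps{\delta_x}{e_k}^2 = \norme{\delta_x}_{W_2^*}^2$: this is simply Bessel/Parseval for the vector in $W_2$ representing $\delta_x$ via the Riesz isomorphism, so it is legitimate for each fixed $x$, and the resulting bound $(C'_1)^2$ is uniform in $x$ since it comes from the embedding constant. Thus there is no real obstacle; the statement is essentially a restatement of the continuity of the embedding $W_2 \hookrightarrow \C^0(\X)$ together with basis-independence of the dual norm. If one instead wanted an explicit constant or a self-contained argument avoiding Riesz representation, one could bound $\sum_{k\leq K} e_k^2(x) = \ps{\delta_x}{\sum_{k\leq K} e_k(x) e_k}$ using Cauchy-Schwarz by $\norme{\delta_x}_{W_2^*} \cdot \big(\sum_{k\leq K} e_k^2(x)\big)^{1/2}$, divide through, and let $K\to\infty$ — this is the cleanest way to present it and is the route I would write up.
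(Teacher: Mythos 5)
Your argument is correct and is essentially identical to the paper's own proof: both bound $\norme{\delta_x}_{W_2^*}$ uniformly via the Sobolev embedding $\norme{f}_\infty \leq C\,\norme{f}_{W_2}$ and then identify $\sum_{k\geq 0} e_k^2(x)$ with $\norme{\delta_x}_{W_2^*}^2$ by Parseval. The extra remarks (basis-independence, the Cauchy--Schwarz variant) are fine but not needed.
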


\begin{proof}
By definition, for any $x\in[0,\mmax]$,
$$
	\norme{\delta_x}_{W^*_2} 
	= \sup_{\norme{f}_{W_2}\leq1}|\ps{\delta_x}{f}|
	= \sup_{\norme{f}_{W_2}\leq1}|f(x)|
	\leq \sup_{\norme{f}_{W_2}\leq1}\norme{f}_\infty\,.
$$
Moreover, we have the Sobolev-type inequalities, $\norme{f}_\infty= \norme{f}_{\C^0}\leq C\,\norme{f}_{W_2}$, for some $C>0$. Hence, by the Parseval identity,
\begin{align*}
C^2 & \geq
	\sup_{x\in[0,\mmax]}\norme{\delta_x}^2_{W^*_2} 
	= \sup_{x\in[0,\mmax]} \sum_{k\geq 0} e_k^2(x)\,.
\qedhere
\end{align*}
\end{proof}

Finally, contrary to the models of \cite{meleard1998,vietchitran2006a}, the fluctuation process (as the empirical measure) is not here a Markov process by itself. We have to consider the couple population/substrate to have a homogeneous dynamics. As a consequence, we will use a slightly larger space than those of \cite{meleard1998,vietchitran2006a}. Let
\begin{align}
\label{def.H}
\H_0 = W_2^*\times \RR, \quad \H = W_3^*\times \RR,
\end{align}
be the Hilbert spaces endowed with the following norms : for  $(\mu,R)\in\H_0$ or $(\mu,R)\in\H$,
$$\norme{(\mu,R)}_{\H_0} = \sqrt{\norme{\mu}_{W_2^*}^2+|R|^2}\,,\quad \norme{(\mu,R)}_\H = \sqrt{\norme{\mu}_{W_3^*}^2+|R|^2}\,.$$

\begin{remark}[Weaker assumptions on $(\bar \eta_0^n)_{n\geq 1}$]
\label{remark.generalisation}
More generally, Theorem \ref{th:tcl-intro} holds for $\H_0 = W_j^*\times \RR$ and $\H = W_{j+1}^*\times \RR$ with $j\geq 2$ (the entire proof holds replacing $W_2$, $W_2^*$, $W_3$ and $W_3^*$ by $W_j$, $W_j^*$, $W_{j+1}$ and $W_{j+1}^*$). For $j>2$, the assumptions on $(\bar \eta_0^n)_{n\geq 1}$ are weaker than for $\H_0$ and $\H$ defined by \eqref{def.H}, however the convergence result is also weaker.
\end{remark}

\subsection{Martingale properties}
\label{sec.martingale.properties}

The sequence of processes $((\widebar \nu_t^n)_{t\geq 0})_{n\geq 1}$, defined by \eqref{def.nubar}, can be rigorously defined as a solution of stochastic differential equations involving Poisson point processes; see \cite[Section 4]{campillo2014a}. Instead of using this characterisation, we only need that it is solution to the following martingale problem.
\begin{lemma}[Semi-martingale decomposition, \cite{campillo2014a}]
\label{lem:martinagle}
We assume that  $\EE(\ps{\nub_0^n}{1}^2)<\infty$. Let $f \in \C^1(\X)$,
then, under Assumptions \ref{hyp.model}, for all  $t>0$:
\begin{align}
\nonumber
  \ps{\nub_t^n}{f}
  &=
  \ps{\nub_0^n}{f}
  + \int_0^t \int_{\X} \biggl[
  	\taudiv(S^n_u,x) \, 
	 \int_0^1  
			  \bigl[ f(\alpha \, x)+f((1-\alpha)\,x)-f(x)\bigr] \, 
			  Q(\dif \alpha)		  
\\		
\label{renormalisation}
  &\qquad  \qquad\qquad\qquad\quad
  - D\,f(x) +\rhog(S_u^n,x)\,f'(x) 
	 \biggr]\nub_u^n(\dif x) \, \dif u	  
  + \ps{Z_t^n}{f} 
\end{align}
where
\begin{align*}
  \frac{\dif}{\dif t} S_t^n 
  &=
  D(\Sin-S_t^n)-\frac {k}{V} \, 
    \int_{\X} \rhog(S_t^n,x)\,\nub_t^n(\dif x)
\end{align*}
and $(\ps{Z_t^n}{f})_{t\geq 0}$ is a martingale with the following predictable quadratic variation:
\begin{align*} 
\nonumber
  \crochet {\ps{Z^n}{f}}_t
  &=
  \frac 1n  
  \int_0^t \int_{\X} \taudiv(S^n_u, x) \, 
     \int_0^1 
        \left[ f(\alpha \, x)+f((1-\alpha) \, x)-f(x) \right]^2 \, 
			  Q(\dif \alpha) \,\nub_u^n(\dif x) \, \dif u  
\\
	&\qquad\qquad\qquad
	 + \frac 1n \, D\, \int_0^t \int_{\X} 
			f(x)^2 \, \nub_u^n(\dif x) \, \dif u\,. 
\end{align*}
\end{lemma}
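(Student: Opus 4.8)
The plan is to read off the decomposition from the pathwise construction of $(\nub^n_t)_{t\ge 0}$ as the solution of a stochastic differential equation driven by Poisson point processes, as in \cite[Section 4]{campillo2014a}. In that representation, divisions are governed by a Poisson random measure whose stochastic intensity puts, at time $u$, the mass $\taudiv(S^n_u,X^i_u)\,Q(\dif\alpha)\,\dif u$ on a division of the $i$-th individual into masses $\alpha\,X^i_u$ and $(1-\alpha)\,X^i_u$; washouts are governed by an independent Poisson measure of intensity $D\,\dif u$ per individual; and between these jumps every mass follows the flow $\dot X^i_u=\rhog(S^n_u,X^i_u)$ while $S^n$ solves its ODE. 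Since $\rhog(s,0)=\rhog(s,\mmax)=0$, the flow leaves each $X^i_u$ in $\X$, and the substrate equation confines $S^n_u$ to $[0,\max(S_0,\Sin)]$; together with Assumptions \ref{hyp.model} this makes all the rates below bounded (by $\taudivb$, $\widebar{\rhog}$, $D$). Equivalently, one could compute the infinitesimal generator of the Markov process $(S^n,\nub^n)$ on cylinder functionals $(\mu,s)\mapsto\ps{\mu}{f}$, invoke Dynkin's formula, and identify the bracket via the carr\'e du champ (the compensator of $(\ps{\nub^n}{f})^2$); both routes lead to the same expressions.

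\textbf{The semimartingale decomposition.} Applying the change-of-variables formula for this jump SDE to $\nu\mapsto\ps{\nu}{f}=\frac1n\sum_i f(X^i)$ splits the increment of $\ps{\nub^n_t}{f}$ into three pieces. The absolutely continuous part comes from the growth flow and equals $\frac1n\sum_i \rhog(S^n_u,X^i_u)\,f'(X^i_u)\,\dif u=\intX\rhog(S^n_u,x)\,f'(x)\,\nub^n_u(\dif x)\,\dif u$. A division of the individual of mass $x$ with ratio $\alpha$ changes $\ps{\nub^n}{f}$ by $\frac1n\bigl[f(\alpha\,x)+f((1-\alpha)\,x)-f(x)\bigr]$, and a washout of mass $x$ by $-\frac1n f(x)$. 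The key bookkeeping point is that, because $\nu^n=n\,\nub^n$ carries $n$ times as many atoms as $\nub^n$, the compensators of these two sums of jumps have total intensities $n\,\taudiv(S^n_u,x)\,Q(\dif\alpha)\,\nub^n_u(\dif x)\,\dif u$ and $n\,D\,\nub^n_u(\dif x)\,\dif u$ respectively; multiplying these intensities by the corresponding $1/n$ jump sizes, the factors $n$ cancel and one recovers exactly the drift in \eqref{renormalisation}, the remainder being a purely discontinuous local martingale $\ps{Z^n_t}{f}$.

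\textbf{The bracket.} The predictable quadratic variation of $\ps{Z^n}{f}$ is the integral of the squared jump sizes against the same compensators. The division contribution is $\frac1{n^2}\bigl[f(\alpha\,x)+f((1-\alpha)\,x)-f(x)\bigr]^2$ integrated against $n\,\taudiv(S^n_u,x)\,Q(\dif\alpha)\,\nub^n_u(\dif x)\,\dif u$, and the washout contribution is $\frac1{n^2}f(x)^2$ integrated against $n\,D\,\nub^n_u(\dif x)\,\dif u$; since the two driving Poisson measures are independent there is no cross term. After the $n/n^2=1/n$ simplification this is precisely $\crochet{\ps{Z^n}{f}}_t$ as stated.

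\textbf{Main obstacle.} The main technical point is to pass from a local to a genuine square-integrable martingale and to ensure all time integrals are a.s. finite; both reduce to an $L^2$ control of the population size $N^n_t=n\,\ps{\nub^n_t}{1}$. Since each individual divides at per-capita rate at most $\taudivb$, $(N^n_t)_{t\ge 0}$ is stochastically dominated by a Yule process of rate $\taudivb$ started at $N^n_0$, whence $\sup_{t\le T}\EE\bigl[(N^n_t)^2\bigr]\le C(T)\,\bigl(1+\EE[(N^n_0)^2]\bigr)<\infty$ under the hypothesis $\EE(\ps{\nub^n_0}{1}^2)<\infty$. Combined with $\norme{f}_\infty,\norme{f'}_\infty<\infty$ and the boundedness of $\taudiv,\rhog$, a routine localisation (stop at the times $N^n$ reaches level $m$, apply the decomposition to the stopped process, then let $m\to\infty$ using the uniform-in-$m$ $L^2$ bound and dominated convergence) removes the word ``local'' and finishes the proof.
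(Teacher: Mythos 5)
Your proposal is correct and follows exactly the route the paper points to: it does not prove this lemma itself but quotes it from \cite{campillo2014a}, whose Section 4 constructs $(\nu^n_t)_{t\geq 0}$ as the solution of an SDE driven by Poisson point processes and derives the drift, martingale part and bracket by the same compensation argument you give (with the same $n/n^2$ bookkeeping and the same $L^2$ population bound, cf.\ their Lemma 5.4, to upgrade the local martingale to a genuine one). Nothing to object to.
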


Let us recall that $\widebar\eta^n_t = (\eta^n_t,R^n_t)$ is defined by \eqref{eq:fluctuation}. From the last lemma, we deduce that $\widebar\eta^n_t = \widebar A_t^n + \widebar M_t^n$ with
$$
	 \ps{\widebar A_t^n}{f} := 
		\left(
			\begin{tabular}{c}
				$\ps{A_t^n}{f}$\\
				$R_t^n$	
			\end{tabular}	 
		\right)
\qquad 	
	 \ps{\widebar M_t^n}{f} := 
		\left(
			\begin{tabular}{c}
				$\ps{M_t^n}{f}$\\
				0	
			\end{tabular} 
		\right)\,,
$$
where the processes $(A_t^n)_{t\in[0,T]}$ and $(R_t^n)_{t\in[0,T]}$ have finite variations and are defined by
\begin{align}
\nonumber
\ps{A_t^n}{f}
	&:= 
	\ps{\eta^n_0}{f}
	+
	\int_0^t \biggl[ 
	\intX \taudiv(S^n_u,x) \, 
	 \int_0^1  
			  \bigl[ f(\alpha \, x)+f((1-\alpha)\,x)-f(x)\bigr] \, 
			  Q(\dif \alpha)\,\eta_u^n(\dif x)
\\
\nonumber 
  &\quad
	 	-D\, \ps{\eta_u^n}{f}
	 	+\ps{\eta_u^n}{\rhog(S_u^n,.)\,f'} 
    \biggr]\, \dif u
	+ \sqrt{n} \,
   \int_0^t \biggl\{
	 		\ps{\xi_u}{\left(\rhog(S_u^n,.)-\rhog(S_u,.)\right)\,f'} 
\\
\label{def.Ant}
	&\quad
	+
	 \intX \left(\taudiv(S^n_u,x)-\taudiv(S_u, x)\right)\,  
	 \int_0^1
	 	\Bigl[f(\alpha\, x)+ f((1-\alpha)\,x)-f(x)\Bigr] \, 	 			
			Q(\dif \alpha) \,\xi_u(\dif x) \biggr\} \,\dif u 
\end{align}
and
\begin{align}
\label{def.Rnt}
R_t^n & =
	\int_{0}^t\biggl[
			- D \, R^n_u
			- \frac kV \, \ps{\eta^n_u}{\rhog(S^n_u,.)}
			- \sqrt{n} \, \frac kV \, \ps{\xi_u}{g(S^n_u,.)-g(S_u,.)}
	\biggr]\,\rmd u.
\end{align}
The process $(\ps{M_t^n}{f})_{[0,T]}$, defined by
\begin{align}
\label{def.Mn}
\ps{M_t^n}{f} := \sqrt{n}\, \ps{Z_t^n}{f}\,,
\end{align}
is a martingale with predictable quadratic variation
\begin{align} 
\nonumber
  \crochet{\ps{M^n}{f}}_t
  &= 
  \int_0^t \intX \taudiv(S^n_u, x) \, 
     \int_0^1 
        \left[ f(\alpha \, x)+f((1-\alpha) \, x)-f(x) \right]^2 \, 
			  Q(\dif \alpha) \,\nub_u^n(\dif x) \, \dif u  
\\
\label{var_qua}
	&\quad
	 +D\, \int_0^t \intX
			f(x)^2 \, \nub_u^n(\dif x) \, \dif u\,. 
\end{align}

\subsection{Preliminary estimates}
\label{subsec.pre.res}

For any $n\geq 1$, let $(\trace{M^n}_t)_{t\in [0,T]}$ be the trace of the process $(M^n_t)_{t \in [0,T]}$, that is the process such that $(\norme{M^n_t}_{W^*_2}^2-\trace{M^n}_t)_{t\in[0,T]}$ is a local martingale \citep{joffe1986a, metivier1982a, metivier1984a}. Let $(e_k)_{k\geq 0}$ be an orthonormal basis of $W_2$, we have
\begin{equation}
\label{eq:trace}
\trace{M^n}_t = \sum_{k\geq 0} \crochet{M^n(e_k)}_t\,.
\end{equation}
Indeed, on the one hand, the Parseval's identity entails that
\begin{align*}
	\norme{M^n_t}^2_{W_2^*} = \sum_{k\geq 0} (M_t^{n}(e_k))^2\, .
\end{align*}
On the other hand, by definition of the predictable quadratic variation, $(M^n_t(e_k)^2-\crochet{M^n(e_k)}_t)_{t \geq 0}$ is a martingale for any $k\geq 0$. Therefore, the uniqueness of the trace implies \eqref{eq:trace}.

\begin{lemma}[Uniform moment]
\label{esp.fini}
If $\sup_{n \geq 1} \EE \left(\norme{\widebar \eta_0^n}^2_{\H_0} \right) < \infty$ then for any $T>0$
$$
	\sup_{n \geq 1} \EE \left(\sup_{u \leq T} \norme{\widebar \eta_u^n}^2_{\H_0} \right) < \infty \,.
$$
\end{lemma}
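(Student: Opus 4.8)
The plan is to bound $\norme{\widebar\eta^n_u}_{\H_0}^2 = \norme{\eta^n_u}_{W_2^*}^2 + |R^n_u|^2$ by going back to the semi-martingale decomposition $\widebar\eta^n = \widebar A^n + \widebar M^n$ from Section \ref{sec.martingale.properties}, estimating the finite-variation part $\widebar A^n$ pathwise and the martingale part $\widebar M^n$ through its trace, and closing the argument with Gr\"onwall's lemma. First I would fix $t\le T$ and write, using $(a+b)^2\le 2a^2+2b^2$,
$$
\norme{\widebar\eta^n_t}_{\H_0}^2 \le 2\norme{\widebar A^n_t}_{\H_0}^2 + 2\norme{\widebar M^n_t}_{\H_0}^2 .
$$
For the drift term, I would use the explicit formulas \eqref{def.Ant}–\eqref{def.Rnt}. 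The point is that every term in $\ps{A^n_t}{f}$ and in $R^n_t$ is either linear in $(\eta^n_u,R^n_u)$ with a bounded coefficient — here one invokes the uniform bounds $0\le\taudiv\le\taudivb$, $0\le g\le\widebar g$, and boundedness of $\partial_s\taudiv,\partial_s g$ coming from Assumption \ref{hyp.model} together with the fact that $S^n_u,S_u\in[0,\max(S_0,\Sin)]$ — or it is one of the ``$\sqrt n$'' discrepancy terms $\sqrt n\,\bigl(\taudiv(S^n_u,\cdot)-\taudiv(S_u,\cdot)\bigr)$, $\sqrt n\,\bigl(g(S^n_u,\cdot)-g(S_u,\cdot)\bigr)$, which by Lipschitz continuity in $s$ are bounded by $K_{gb}\,|\sqrt n(S^n_u-S_u)| = K_{gb}\,|R^n_u|$, i.e. again linear in $R^n_u$ (the measure $\xi_u$ being finite). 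Taking the supremum over $\norme{f}_{W_2}\le1$ of the resulting pointwise bounds, and using the Sobolev inequalities \eqref{ineg.normes} to control $\norme{f}_\infty$, $\norme{f'}_\infty$ by $\norme{f}_{W_2}$, I obtain a constant $C=C(T)$ with
$$
\norme{\widebar A^n_t}_{\H_0}^2 \le C\left(\norme{\widebar\eta^n_0}_{\H_0}^2 + \int_0^t \sup_{u\le s}\norme{\widebar\eta^n_u}_{\H_0}^2\,\dif s\right),
$$
after one more application of Cauchy–Schwarz in time to pass from $\bigl(\int_0^t \norme{\widebar\eta^n_u}_{\H_0}\,\dif u\bigr)^2$ to $T\int_0^t \norme{\widebar\eta^n_u}_{\H_0}^2\,\dif u$.

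For the martingale part, $\norme{\widebar M^n_t}_{\H_0}^2 = \norme{M^n_t}_{W_2^*}^2$, and by the discussion preceding the lemma $\norme{M^n_t}_{W_2^*}^2 - \trace{M^n}_t$ is a local martingale with $\trace{M^n}_t = \sum_{k\ge0}\crochet{M^n(e_k)}_t$. Summing \eqref{var_qua} over the orthonormal basis and exchanging sum and integral, the bracket $[f(\alpha x)+f((1-\alpha)x)-f(x)]^2 \le 9\norme{f}_\infty^2$ and $f^2(x)\le\norme{f}_\infty^2$, so Lemma \ref{lemma.somme.ek} (via $\sum_k e_k^2(x)\le K_1$, after the Sobolev step $\norme{e_k}_\infty\le C\norme{e_k}_{W_2}$ is absorbed into the constant) gives
$$
\trace{M^n}_t \le K_1(9\taudivb + D)\int_0^t \ps{\nub^n_u}{1}\,\dif u .
$$
Thus I need an a priori bound on $\EE\bigl(\sup_{u\le T}\ps{\nub^n_u}{1}\bigr)$; this is the total mass, which is controlled uniformly in $n$ by the same kind of Gr\"onwall argument applied to \eqref{renormalisation} with $f\equiv1$ (this is essentially already in \cite{campillo2014a} and I would cite it, or reprove it in two lines). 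With that in hand, $\EE(\trace{M^n}_t)\le C(T)$ uniformly in $n$, and a localization argument (stopping times $\tau_R = \inf\{t:\norme{\widebar\eta^n_t}_{\H_0}>R\}$, monotone convergence) upgrades the local-martingale property to $\EE\bigl(\norme{M^n_{t\wedge\tau_R}}_{W_2^*}^2\bigr) = \EE\bigl(\trace{M^n}_{t\wedge\tau_R}\bigr)\le C(T)$, then Doob's $L^2$ inequality gives $\EE\bigl(\sup_{u\le t}\norme{M^n_u}_{W_2^*}^2\bigr)\le 4C(T)$.

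Putting the two pieces together and writing $\phi_n(t) := \EE\bigl(\sup_{u\le t}\norme{\widebar\eta^n_u}_{\H_0}^2\bigr)$ (finite for each $n$ by the localization, since a.s. the paths are bounded on $[0,T]$), I get
$$
\phi_n(t) \le C(T)\left(\sup_{n}\EE\norme{\widebar\eta^n_0}_{\H_0}^2 + 1 + \int_0^t \phi_n(s)\,\dif s\right),
$$
and Gr\"onwall's lemma yields $\phi_n(T)\le C(T)\bigl(\sup_n\EE\norme{\widebar\eta^n_0}_{\H_0}^2 + 1\bigr)e^{C(T)T}$, a bound independent of $n$, which is the claim. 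The main obstacle I anticipate is purely bookkeeping: making sure the $\sqrt n$ prefactors genuinely cancel (they do, because each such term pairs a factor $\sqrt n$ with a difference $S^n_u-S_u = R^n_u/\sqrt n$ or with $\nub^n_u - \xi_u = \eta^n_u/\sqrt n$), and checking that the passage to $\H_0 = W_2^*\times\RR$ is legitimate — the dual norm $\norme{\cdot}_{W_2^*}$ is handled throughout by the variational characterization $\sup_{\norme{f}_{W_2}\le1}$, which is exactly what lets the Sobolev inequalities \eqref{ineg.normes} and Lemma \ref{lemma.somme.ek} enter. No extra integrability beyond the $W_2$-level Sobolev embedding (not even the Hilbert–Schmidt embedding into $W_3^*$) is needed for this particular estimate.
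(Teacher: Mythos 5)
Your proposal follows the paper's proof essentially step for step: the same decomposition $\widebar\eta^n=\widebar A^n+\widebar M^n$, the same control of the martingale part through its trace, Doob's inequality, Lemma~\ref{lemma.somme.ek} and the uniform bound on $\sup_{u\le T}\ps{\nub^n_u}{1}$ from \cite{campillo2014a}, the same linearization of the $\sqrt n$-discrepancy terms into quantities proportional to $|R^n_u|$ and $\norme{\eta^n_u}_{W_2^*}$ via the Lipschitz hypotheses and the Sobolev inequalities, and the same Gr\"onwall closure. The one step to tighten is the trace bound: you must expand $\bigl(e_k(\alpha x)+e_k((1-\alpha)x)-e_k(x)\bigr)^2\le 3\bigl(e_k^2(\alpha x)+e_k^2((1-\alpha)x)+e_k^2(x)\bigr)$ termwise and only then invoke the pointwise estimate $\sum_k e_k^2(x)\le K_1$ of Lemma~\ref{lemma.somme.ek} --- the alternative route you parenthetically mention, absorbing $\norme{e_k}_\infty\le C\norme{e_k}_{W_2}$ into the constant, would leave you with the divergent series $\sum_k\norme{e_k}_\infty^2$ --- but your stated conclusion for $\trace{M^n}_t$ is exactly the paper's, so this is a matter of wording rather than a gap.
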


\begin{proof}
By definition of $\norme{.}_{\H_0}$ and the triangular inequality, we have
\begin{align*}
\norme{\widebar\eta_t^n}^2_{\H_0}
	&= 
		\norme{\eta_t^n}^2_{W_2^*} + |R_t^n|^2
 \leq
		2\,(\norme{A_t^n}^2_{W_2^*} + \norme{M_t^n}^2_{W_2^*}) + |R_t^n|^2\,.
\end{align*}
By the Fubini-Tonelli theorem, 
\begin{align*}
\EE \left[ \sup_{u\leq t} \norme{M_u^n}^2_{W_2^*}\right]
	&=
		\EE \left[ \sup_{u\leq t} \sum_{k\geq 0} M_u^n(e_k)^2\right]
	\leq
		\sum_{k\geq 0}\EE \left[ \sup_{u\leq t} M_u^n(e_k)^2\right]\,.
\end{align*}
Applying the Doob inequality (see for instance \cite[Theorem 1.43]{jacod2003a}) to the martingale $(M^n_u(e_k))_{u\geq 0}$ for any $k \geq 0$, we have
\begin{align*}
\EE \left[ \sup_{u\leq t} \norme{M_u^n}^2_{W_2^*}\right]
	&\leq 
		4\, \sum_{k\geq 0}\EE \left[M_t^n(e_k)^2\right]
	=
		4\, \sum_{k\geq 0}\EE \left[\crochet{M^n(e_k)}_t\right]
	=
		4\, \EE \left[\sum_{k\geq 0} \crochet{M^n(e_k)}_t\right]\,.
\end{align*}
By \eqref{var_qua} and by Lemma \ref{lemma.somme.ek},
\begin{align*}
\sum_{k\geq 0} \crochet{M^n(e_k)}_t
	&=
		\sum_{k \geq 0}
		\int_0^t \intX \Big\{
			\taudiv(S_u^n,x)\, \int_0^1 
				\left[e_k(\alpha\,x)+e_k((1-\alpha)\,x)-e_k(x)\right]^2
				\,Q(\dif \alpha)  
\\
	& \qquad \qquad \qquad \quad
			+  D\,e^2_k(x)
				\Big\} \, \widebar\nu_u^n(\dif x)\,\dif u
\\
	& \leq
		\left(5\,\widebar \taudiv+D \right)\,K_1 \, T\, \sup_{u\leq t} \widebar \nu_u^n(1)\,.
\end{align*}
By assumption $\sup_n \EE \left(\norme{\widebar \eta_0^n}_{\H_0} \right) < \infty$, which implies that  $\sup_n \EE\left(\widebar\nu_0^n(1) \right)<+\infty$. Then, from \cite[Lemma 5.4]{campillo2014a}
$\sup_n \EE\left[\sup_{u\leq T} \widebar\nu_u^n(1) \right]<+\infty$. Hence
\begin{align}
\label{eq:borne-martinagle}
K_2:= \sup_n \EE \left[ \sup_{u\leq T} \norme{M_u^n}^2_{W_2^*}\right]
	<+\infty\,.
\end{align}
Using the Gronwall inequality, we easily check that 
\begin{align}
\label{sup.xi}
	 C_T^\xi := \sup_{t\in[0,T]} \ps{\xi_t}{1} < + \infty \,;
\end{align}
it is however proved in \cite[Proof of Theorem 5.2]{campillo2014a}.
Therefore, from Assumptions \ref{hyp.model} and the Sobolev-type inequalities,
\begin{align*}
\norme{A_t^n}_{W_2^*}
	& \leq
		\norme{\eta^n_0}_{W_2^*}
		+
		C\,\int_0^t \norme{\eta_u^n}_{W_2^*}\,\left(\widebar \rhog + 3\widebar \taudiv + D\right) \, \dif u
		+
		4\,C\, \int_0^t |R^n_u|\,K_{gb}\,C_T^\xi\,\dif u
\\
	& \leq
		\norme{\eta^n_0}_{W_2^*}
		+
		K_3\,\int_0^t \left(\norme{\eta_u^n}_{W_2^*} + |R^n_u| \right) \, \dif u \, ,
\end{align*}
with $K_3:=C\,\max\big\{\widebar \rhog + 3\widebar \taudiv + D ; 4\,K_{gb}\,C_T^\xi\big\}$ and $C$ depends on the constants $C_1$ and $C'_1$ of the Sobolev-type inequalities.
Hence
\begin{align*}
\norme{A^n_t}^2_{W_2^*}
	 \leq 2\,\left(
	 			\norme{\eta^n_0}_{W_2^*}^2
	 			+K_3^2\,T\,\int_0^t \norme{\widebar \eta_u^n}_{\H_0}^2\,\dif u
	 		\right)\,. 		
\end{align*}
Moreover,
\begin{align*}
\left|R_t^n \right|
	& \leq
		\int_0^t\left(
			D\, |R_u^n|
			+ C\,\frac{k}{V}\,\widebar \rhog \, \norme{\eta_u^n}_{W_2^*}	
			+\frac{k}{V}\, |R_u^n|\, K_{gb}\,C_T^\xi	
		 \right)\,\dif u\,
\end{align*}
hence
\begin{align*}
\left|R_t^n \right|^2
	& \leq
		 K_4^2\, T\,\int_0^t \norme{\widebar \eta_u^n}^2_{\H_0}	\,\dif u\, ,
\end{align*}
with $K_4=\max\left\{C\,\frac{k}{V}\,\widebar \rhog\,;\, D+ \frac{k}{V}\, K_{gb}\,C_T^\xi\right\}$.
Therefore, by the Fubini-Tonelli theorem
\begin{align*}
\EE \left[\sup_{s\leq t}\norme{\widebar\eta_s^n}^2_{\H_0}\right]
	& \leq
		(4\,K^2_3+K^2_4)\,T\, \int_0^t \EE \left[\sup_{s\leq u}\norme{\widebar\eta_s^n}^2_{\H_0}\right]\,\dif u
		 + 4\,\EE \norme{\widebar \eta_0^n}^2_{\H_0} + 2\,K_2\,.
\end{align*}
By Gronwall lemma, we finally get
\begin{align*}
	\sup_{n\geq 1}\EE \left[\sup_{s\leq t}\norme{\widebar\eta_s^n}^2_{\H_0}\right]
		 \leq
			e^{(4\,K^2_3+K^2_4)\,T\,t}\,(2\,K_2 + 4\,\sup_{n\geq 1}\EE \norme{\widebar\eta_0^n}^2_{\H_0})
		< + \infty\,.
\end{align*}
\end{proof}

\subsection{Proof of the Theorem \ref{th:tcl-intro}}
\label{sec.proof.TCL}

We divide the proof in two steps. The first one is devoted to the proof of the tightness of the sequence of processes $(\widebar\eta^n)_{n\geq 1}$ in $\DD([0,T],\H)$. In the second one, we prove that the limit of the process is unique and given by (\ref{eq.limite.tcl}-\ref{crochet.martin}).

\subsubsection*{Step 1 : Tightness of $(\widebar\eta^n)$ in $\DD([0,T],\H)$}
From \cite[Lemma C]{meleard1998} or \cite{joffe1986a}, the sequence of processes $((\widebar \eta^n)_{t\in{[0,T]}})_{n\geq 1}$ is tight in $\DD([0,T],\H)$ if the two following conditions hold:
\begin{itemize}
\item[${[T]}$] for all $t\leq T$,
$$
\sup_{n\geq 1} \mathbb{E}\left[\Vert \widebar \eta^n_t \Vert^2_{\mathcal{H}_0}  \right] <+ \infty;
$$
\item[${[A]}$] for any $\varepsilon>0$, $\alpha>0$, there exist $\theta>0$ and $n_0$ such that for any sequence $(\sigma_n,\tau_n)_n$ of pairs of stopping times with $\sigma_n \leq \tau_n \leq \sigma_n+\theta$,
\begin{align*}
	\sup_{n\geq n_0} 
		\PP(
			\norme{\widebar A^n_{\tau_n}-\widebar A^n_{\sigma_n}}_\H\geq \alpha
			)\leq \varepsilon \, ,
\end{align*}
\begin{align*}
	\sup_{n\geq n_0} 
		\PP(
			\left|\trace{M^n}_{\tau_n}-\trace{M^n}_{\sigma_n}\right|\geq \alpha
			)\leq \varepsilon \, .
\end{align*}
\end{itemize}
Indeed recall that the embedding $\H_0 \subset \H$ is Hilbert-Schmidt and then, using Markov inequality, $[T]$ implies that the sequence $(\widebar \eta^n_t)_n$ almost surely belongs to a bounded set of $\H_0$ (which is compact in $\H$).
In short, $[T]$ implies the tightness of $(\widebar \eta^n_t)_{n\geq 0}$ for every $t\geq 0$ in $\H$. 

In order to prove the tightness of $(\widebar\eta^n)$ in $\DD([0,T],\H)$, we have to prove the conditions $[T]$ and ${[A]}$. Condition $[T]$ is a direct consequence of Lemma \ref{esp.fini}. Let us now prove ${[A]}$. By the Markov inequality,
$$
\PP\left(\norme{\widebar A_{\tau_n}^n-\widebar A_{\sigma_n}^n}_\H \geq \alpha \right)
	\leq
	\frac{\EE\norme{\widebar A_{\tau_n}^n-\widebar A_{\sigma_n}^n}_\H}{\alpha}\, .
$$
By \eqref{def.Ant}, we have for any $f\in W_3 \subset W_2$ such that $\norme{f}_{W_3}\leq 1$,
\begin{align*}
|\ps{(A_{\tau_n}^n-A_{\sigma_n}^n)}{f}|
	& \leq
		C\,\int_{\sigma_n}^{\sigma_n+\theta} \norme{\eta_u^n}_{W_2^*}\,(\widebar \rhog + 3\,\widebar \taudiv+D)\,\dif u
		+ C\,\int_{\sigma_n}^{\sigma_n+\theta} |R^n_u|\,4\,K_{gb}\,C_T^\xi\,\dif u
\\
	& \leq
		C\,\int_{\sigma_n}^{\sigma_n+\theta} (\norme{\eta_u^n}_{W_2^*}+|R^n_u|)\,\dif u
\\
	& \leq
		C\, \theta \sup_{u\leq T} \norme{\widebar \eta_u^n}_{\H_0}
\end{align*}
where the constant $C$ can be different from a line to another.

By the same way,
$|R_{\tau_n}^n-R_{\sigma_n}^n|
	\leq
		C\, \theta \sup_{u\leq T} \norme{\widebar \eta_u^n}_{\H_0}$
then
$$
	\EE(\norme{\widebar A_{\tau_n}^n-\widebar A_{\sigma_n}^n}_\H)
		\leq 
		C\,\theta \, \sup_{n\geq 1} \EE \left(\sup_{u \leq T} \norme{\widebar \eta_u^n}_{\H_0} \right)\,.
$$
By Lemma \ref{esp.fini}, the first condition of ${[A]}$ is then satisfied.

\medskip

In the same way,
\begin{align*}
\left|\trace{M^n}_{\tau_n}-\trace{M^n}_{\sigma_n}\right|
	&= 
		\Bigg|
			\sum_{k\geq 0} 
				\big(
					\crochet{M^n(e_k)}_{\tau_n}
					-
					\crochet{M^n(e_k)}_{\sigma_n}
				\big)
		\Bigg|
\\
	&\leq
		\sum_{k\geq 0} 
		\left|
		\crochet{M^n(e_k)}_{\tau_n}
					-
					\crochet{M^n(e_k)}_{\sigma_n}
		\right|
\end{align*}
By \eqref{var_qua} and Lemma \ref{lemma.somme.ek}, we then get
$$
	\left|\trace{M^n}_{\tau_n}-\trace{M^n}_{\sigma_n}\right|
	\leq
		C\, K_1\, \int_{\sigma_n}^{\sigma_n+\theta} \ps{\nub_u^n}{1}\,\dif u\,
$$
therefore
$$
	\EE\left|\trace{M^n}_{\tau_n}-\trace{M^n}_{\sigma_n}\right|
	\leq
		C\, K_1\, \theta \, 
		\sup_{n\geq 1} \EE \left(\sup_{u \leq T} \norme{\widebar \eta_u^n}_{\H_0} \right)
$$
and by Lemma \ref{esp.fini}, the condition ${[A]}$ holds.

\subsubsection*{Step 2 : Identification of the accumulation points}

From Step 1, the sequence $(\widebar\eta^n)_{n\geq 1}$ is tight in $\DD([0,T],\H)$. Therefore, by Prokhorov's theorem, it is relatively compact and then we can extract, from $(\widebar\eta^n)_{n\geq 1}$, a subsequence that converges weakly to a limit $(\widebar\eta_t)_{t\in [0,T]} = (\eta_t, R_t)_{t\in [0,T]} \in\DD([0,T],\H)$. We want to prove, in this step, that this limit is unique and defined by (\ref{eq.limite.tcl}). Then, the theorem will follow (see for example \cite[Corollary p.59]{billingsley1968a}).
For a better simplicity in the notations, we assume, without loss of generality that the entire sequence $(\widebar\eta^n)_{n\geq 1}$ converges towards the limit $\widebar \eta=(\eta,R)$.

\bigskip

\begin{lemma}[Convergence of the martingale part]
\label{cv.suite.martingales}
The sequence of martingale processes $(M^n)_{n}$ converges in distribution in $\DD([0,T],W_3^*)$ towards a process $\gau$ with values in $\C([0,T],\C^{0,*}(\X)) \subset \DD([0,T],W_3^*)$, where $\C^{0,*}(\X)$ is the dual of $\C^{0}(\X)$. For any $f \in \C^0(\X)$, the process $\ps{\gau}{f}$ is a continuous centred Gaussian martingale with values in $\RR$ with quadratic variation defined by \eqref{crochet.martin}.
\end{lemma}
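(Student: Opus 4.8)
## Proof Plan for Lemma \ref{cv.suite.martingales}

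The plan is to apply a martingale central limit theorem on the Hilbert space $W_3^*$ (as found, e.g., in \cite{joffe1986a} or \cite{metivier1984a}), which requires three ingredients: tightness of the sequence $(M^n)_n$ in $\DD([0,T],W_3^*)$, convergence of the bracket processes $\crochet{M^n(f)}$ to a deterministic limit for each fixed test function, and asymptotic negligibility of the jumps. First I would establish tightness of $(M^n)_n$ in $\DD([0,T],W_3^*)$: since $M^n = \widebar\eta^n{}_{(\cdot)} - \widebar A^n$ restricted to the first coordinate (more precisely $\ps{\widebar M^n_t}{f}=(\ps{M^n_t}{f},0)$), tightness of $(\widebar\eta^n)$ in $\DD([0,T],\H)$ from Step 1, together with the tightness of the finite-variation parts $(\widebar A^n)$ already shown via condition $[A]$, yields tightness of $(M^n)$ in $\DD([0,T],W_3^*)$. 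Alternatively one re-runs the Aldous-type criterion directly on $\trace{M^n}$ using \eqref{var_qua} and Lemma \ref{lemma.somme.ek}, exactly as in Step 1; the same bounds give $\EE|\trace{M^n}_{\tau_n}-\trace{M^n}_{\sigma_n}| \leq C K_1 \theta \sup_n\EE(\sup_{u\leq T}\norme{\widebar\eta^n_u}_{\H_0})$, which controls the modulus of continuity of the brackets and hence, by the theorems of \cite{metivier1984a}, the tightness of the martingales themselves.

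Next I would identify the limiting bracket. Fix $f\in\C^1(\X)$. By \eqref{var_qua},
\begin{align*}
\crochet{\ps{M^n}{f}}_t
&= \int_0^t \intX \taudiv(S^n_u,x)\int_0^1\bigl[f(\alpha x)+f((1-\alpha)x)-f(x)\bigr]^2 Q(\dif\alpha)\,\widebar\nu^n_u(\dif x)\,\dif u \\
&\quad + D\int_0^t\intX f^2(x)\,\widebar\nu^n_u(\dif x)\,\dif u.
\end{align*}
Using the convergence $(S^n,\widebar\nu^n)\to(S,\xi)$ from \eqref{eq:cvEID}, the continuity and boundedness of $\taudiv$, $g$ and of the integrand in $f$, together with the Skorokhod representation theorem and the uniform moment bound $\sup_n\EE\sup_{u\leq T}\widebar\nu^n_u(1)<\infty$ (from \cite[Lemma 5.4]{campillo2014a}), one passes to the limit to get that $\crochet{\ps{M^n}{f}}_t$ converges to the deterministic, continuous, increasing function $\crochet{\gau(f)}_t$ of \eqref{crochet.martin}. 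The deterministic limit of the brackets forces, by the martingale CLT, the limit process $\ps{\gau}{f}$ to be a continuous centred Gaussian martingale with that variance; the Gaussianity and continuity of paths come for free from the theorem once the jumps are shown to vanish. For the jump condition, note each jump of $\nu^n$ has size $1$ in mass, so each jump of $\ps{M^n_t}{f}=\sqrt n\ps{Z^n_t}{f}$ is $O(1/\sqrt n)\cdot\norme{f}_\infty$ uniformly, hence $\EE\sup_{t\leq T}\norme{\Delta M^n_t}^2_{W_3^*}\to 0$ using $\norme{f}_\infty\leq C'_2\norme{f}_{W_3}$ from \eqref{ineg.normes}; this gives both the negligibility needed for the CLT and the fact that the limit lives in $\C([0,T],W_3^*)$.

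Finally I would upgrade the conclusion from "$\gau$ takes values in $W_3^*$" to "$\gau$ takes values in $\C^{0,*}(\X)$''. The point is that the limiting covariance functional $f\mapsto\crochet{\gau(f)}_t$ is continuous for the $\C^0$-norm (it is bounded by $(5\widebar\taudiv+D)\,C^\xi_T\,T\,\norme{f}^2_\infty$), so $\gau_t$ extends to a bounded linear functional on $\C^0(\X)$; combined with path continuity this places $\gau$ in $\C([0,T],\C^{0,*}(\X))$, which embeds in $\DD([0,T],W_3^*)$ via \eqref{ineg.normes}. I expect the main obstacle to be the careful verification that the martingale CLT of \cite{joffe1986a, metivier1984a} applies in the dual Sobolev setting — in particular checking the nuclear/Hilbert-Schmidt conditions on the covariance operator, which is where the choice of the larger space $W_3^*$ (rather than $W_2^*$) is essential: the covariance operator of $\gau_t$, while not trace-class on $W_2^*$, becomes trace-class on $W_3^*$ because the embedding $W_3^*\hookrightarrow W_2^*$... no, the relevant direction is that $W_2^*$ embeds Hilbert-Schmidt into $W_3^*$, so the bracket bound in $W_2^*$ transfers to a nuclear bound in $W_3^*$. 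Threading this correctly, and making sure the evaluation maps $\mu\mapsto\ps{\mu}{f}$ are continuous on $W_3^*$ for all $f\in\C^1$, is the delicate technical heart of the argument.
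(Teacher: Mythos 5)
Your proposal is correct and follows essentially the same route as the paper: convergence of the brackets $\crochet{\ps{M^n}{f}}$ via \eqref{eq:cvEID}, the uniform jump bound $|\Delta\ps{M^n_t}{f}|\leq \norme{f}_\infty/\sqrt{n}$, an invocation of the martingale central limit theorem (the paper cites Jacod--Shiryaev, Theorem 3.11 p.~473, for each fixed $f$), and tightness in $\DD([0,T],W_3^*)$ obtained from \eqref{eq:borne-martinagle} and the trace estimates of Step~1. The only differences are cosmetic (order of the steps, and your framing via a Hilbert-space CLT rather than scalar CLTs plus tightness), and your self-corrected remark on the Hilbert--Schmidt embedding lands on the right direction ($W_2^*\hookrightarrow W_3^*$).
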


\begin{proof}
Let $f\in \C^{0}(\X)$ and $\crochet{\ps{\gau}{f}}_t$ be the quadratic variation defined by \eqref{crochet.martin}, then by \eqref{var_qua},
\begin{align*}
&|\crochet{\ps{M^n}{f}}_t-\crochet{\ps{\gau}{f}}_t|
\\
	&\quad \leq
		\int_0^t \intX \biggl[
		 \taudiv(S^n_u, x) \, 
    		 \int_0^1 
        		\left[ f(\alpha \, x)+f((1-\alpha) \, x)-f(x) \right]^2 \, 
			  Q(\dif \alpha)
			  +D\, f(x)^2 \biggr]\,
\\
	&\qquad \qquad\qquad\qquad
		 \times |\nub_u^n(\dif x)-\xi_u(\dif x)| \, \dif u
\\
	& \qquad +
		\int_0^t \intX |\taudiv(S^n_u, x)-\taudiv(S_u, x)| \, 
     	\int_0^1 
        	\left[ f(\alpha \, x)+f((1-\alpha) \, x)-f(x) \right]^2 \, 
			  Q(\dif \alpha) \,\xi_u(\dif x) \, \dif u  
\\
	& \quad \leq
		(9\,\widebar \taudiv +D)\,\norme{f}^2_\infty \,
		 \int_0^t \intX |\nub_u^n(\dif x)-\xi_u(\dif x)| \, \dif u
		+ 9\,K_{gb}\,\norme{f}^2_\infty C_T^\xi \,
			\int_0^t |S_u^n-S_u|\,\dif u
\end{align*}
with $C_T^\xi$ defined by \eqref{sup.xi}. Therefore, by \eqref{eq:cvEID} and the dominated convergence theorem, $\crochet{\ps{M^n}{f}}$ converges in distribution towards $\crochet{\ps{G}{f}}$.

Moreover, a discontinuity of $t \mapsto \nu_t$ only happens during a birth or death event and the jump of the population number is $\pm 1$. Then from \eqref{def.nubar} and \eqref{renormalisation},  for any $f\in \C^0(\X)$, $|\Delta Z^n_t(f)|\leq \frac{\norme{f}_\infty}{n}$. Therefore, from \eqref{def.Mn}
\begin{align}
\label{maj.delta.M}
	\sup_{t\in[0,T]} |\Delta \ps{M_t^n}{f}|\leq \frac{\Vert f \Vert_\infty}{\sqrt{n}}\,,
\end{align}
and then $\sup_{t\in[0,T]} |\Delta \ps{M_t^n}{f}|$ converges in probability towards 0. Hence, according to \cite[Theorem 3.11 page 473]{jacod2003a}, for each $f$, the sequence of processes $((\ps{M^n_t}{f})_{t \in [0,T]})_{n\geq 1}$ converges to $(\ps{G_t}{f})_{t \in [0,T]}$. To have an (infinite dimensional) convergence of the sequence of (operator valued) processes $((M^n_t)_{t \in [0,T]})_{n\geq 1}$, it then suffices to prove its tightness in $\DD([0,T],W_3^*)$. To do it, it is enough to use \eqref{eq:borne-martinagle} and arguments of the step 1. 
\end{proof}

\begin{lemma}[Limit equation for $(R_t)$]
\label{prop.identification.Rt}
The limit process $(R_t)_{t\in[0,T]}$ satisfies
\begin{align*}
R_t & = 
		-\int_{0}^t\biggl[
			D\,R_u+
			\frac kV \, \ps{\eta_u}{\rhog(S_u,.)}
			+ R_u \, \frac kV \, \ps{\xi_u}{\partial_s\rhog(S_u,.)} 
		\biggr]\,\rmd u\,.
\end{align*}
\end{lemma}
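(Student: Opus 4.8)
The plan is to pass to the limit in the semi-martingale equation \eqref{def.Rnt} for $R^n_t$ along the convergent subsequence, treating each of the three integrand terms separately. Since $t\mapsto R^n_t$ is continuous (indeed Lipschitz in $t$ by the uniform moment bound of Lemma \ref{esp.fini}) and the limit process lives in $\C([0,T],\RR)$, convergence in $\DD([0,T],\H)$ of $\widebar\eta^n=(\eta^n,R^n)$ transfers to convergence of $R^n$ uniformly on $[0,T]$; by the Skorokhod representation theorem I may assume all these convergences hold almost surely, which reduces everything to an analysis term by term of the finite-variation integral.

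First I would handle the easy term $D\,R^n_u$: since $R^n_u\to R_u$ uniformly on $[0,T]$, dominated convergence (the uniform moment bound gives the domination) yields $\int_0^t D\,R^n_u\,\rmd u\to\int_0^t D\,R_u\,\rmd u$. For the second term $\frac kV\ps{\eta^n_u}{g(S^n_u,\cdot)}$, I would split it as $\ps{\eta^n_u}{g(S_u,\cdot)}+\ps{\eta^n_u}{g(S^n_u,\cdot)-g(S_u,\cdot)}$; the first piece passes to the limit because $\eta^n_u\to\eta_u$ in $W_3^*$ and $x\mapsto g(S_u,x)$ belongs to $W_3$ (here I use the regularity Assumption \ref{hyp.model} and the Sobolev embedding $\C^3\hookrightarrow W_3$, or rather that $g(S_u,\cdot)$ is smooth enough to be tested against $W_3^*$), while the second piece is bounded by $\norme{\eta^n_u}_{W_2^*}\,K_{gb}\,|S^n_u-S_u|\,C$ which vanishes since $S^n\to S$ uniformly (by \eqref{eq:cvEID}) and $\sup_u\norme{\eta^n_u}_{W_2^*}$ is tight by Lemma \ref{esp.fini}. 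For the third term I would Taylor-expand: $\sqrt n\,(g(S^n_u,x)-g(S_u,x)) = R^n_u\,\partial_s g(S_u,x) + \sqrt n\,\big(g(S^n_u,x)-g(S_u,x)-(S^n_u-S_u)\partial_s g(S_u,x)\big)$, where the remainder is $O(\sqrt n\,|S^n_u-S_u|^2) = O(|R^n_u|\,|S^n_u-S_u|)$ by the Lipschitz continuity of $\partial_s g$ in Assumption \ref{hyp.model}\eqref{hyp.lipch}; integrating against $\xi_u$ and using $\sup_u\ps{\xi_u}{1}<\infty$ (see \eqref{sup.xi}) together with $S^n\to S$ uniformly kills the remainder, and the leading term converges to $R_u\,\ps{\xi_u}{\partial_s g(S_u,\cdot)}$.

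Collecting the three limits and integrating in $u$ (again via dominated convergence, with domination from Lemma \ref{esp.fini} and \eqref{sup.xi}) gives exactly the claimed identity for $(R_t)_{t\in[0,T]}$. The one subtlety deserving care is the interchange of the limit with the time integral in the term involving $\eta^n_u$: one needs that $u\mapsto\ps{\eta^n_u}{g(S_u,\cdot)}$ converges in, say, $L^1([0,T])$ to $u\mapsto\ps{\eta_u}{g(S_u,\cdot)}$, which follows from the a.s.\ convergence $\eta^n\to\eta$ in $\DD([0,T],W_3^*)$ at every continuity point of the limit together with the uniform bound $\sup_n\EE\sup_{u\le T}\norme{\eta^n_u}_{W_2^*}^2<\infty$; since the limit $\eta$ has a.s.\ at most countably many discontinuities, Lebesgue-a.e.\ convergence on $[0,T]$ holds and dominated convergence applies. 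I expect this measurability/interchange bookkeeping, rather than any genuinely hard estimate, to be the main technical point; all the quantitative bounds are already supplied by Lemma \ref{esp.fini}, \eqref{sup.xi}, and Assumption \ref{hyp.model}.
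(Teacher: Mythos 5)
Your proposal is correct and follows essentially the same route as the paper: pass to the limit term by term in \eqref{def.Rnt}, with the first two terms handled by the convergence of $\widebar\eta^n$ together with the uniform bound of Lemma \ref{esp.fini}, and the third term via a first-order expansion of $g$ in $s$ whose remainder is $O\bigl((R^n_u)^2/\sqrt{n}\bigr)$ (the paper writes this expansion as $\int_0^1 \partial_s g\bigl(S_u+\tfrac{\alpha}{\sqrt n}R^n_u,\cdot\bigr)R^n_u\,\dif\alpha$ and bounds the discrepancy by $|\partial_s g(S_u,\cdot)(R^n_u-R_u)|+\tfrac{K_{gb}}{2\sqrt n}(R^n_u)^2$, which is exactly your Taylor remainder). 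The only cosmetic differences are your use of the Skorokhod representation to work almost surely and your more explicit splitting of the term $\ps{\eta^n_u}{g(S^n_u,\cdot)}$; neither changes the substance of the argument.
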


\begin{proof}
By definition, $(R_t)_{t\in[0,T]}$ is a limit point of the sequence of processes $((R^n_t)_{t\in[0,T]})_n$, then by \eqref{def.Rnt}, we have the following limit in distribution : for any $t\in [0,T]$,
\begin{align}
\label{def.R.as.lim.int}
R_t & =
	-\lim_{n \to \infty}
	\int_{0}^t\biggl[
			D \, R^n_u
			+ \frac kV \, \ps{\eta^n_u}{\rhog(S^n_u,.)}
			+ \sqrt{n} \, \frac kV \, \ps{\xi_u}{g(S^n_u,.)-g(S_u,.)}
	\biggr]\,\rmd u\,.
\end{align}
By definition of $\widebar \eta=(\eta,R)$ as a limit of $(\widebar \eta^n)_n$ and as the function $s \mapsto g(s,.)$ is continuous, from Lemma \ref{esp.fini}, $D \, R^n_u+ \frac kV \, \ps{\eta^n_u}{\rhog(S^n_u,.)}$ converges in distribution towards $D \, R_u+ \frac kV \, \ps{\eta_u}{\rhog(S_u,.)}$.
Moreover, for any $x\in\X$, by definition of $R_u^n$ (see \eqref{eq:fluctuation}),
\begin{align*}
\sqrt{n}\,(g(S_u^n,x)-g(S_u,x))
	&=
		\sqrt{n}\,\left(g\left(S_u+\frac{1}{\sqrt{n}}\,R_u^n,x\right)-g(S_u,x)\right)
\\
	&=
		\int_0^1 
			\partial_s g\left(S_u+\frac{\alpha}{\sqrt{n}}\,R_u^n,x\right)\,
			R_u^n\,\dif \alpha\,.
\end{align*}
Therefore, by Assumption \ref{hyp.model}
\begin{align*}
& \left|\sqrt{n} \,\left(\rhog(S_u^n,.)-\rhog(S_u,.)\right)
					-R_u\,\partial_s\rhog(S_u,.)\right|
\\
&\qquad=
	\left|\partial_s\rhog(S_u,.) (R_u^n-R_u)
	+
	R_u^n\,\int_0^1 
			\left[
			\partial_s 
			g\left(S_u+\frac{\alpha}{\sqrt{n}}\,R_u^n,.\right)
			-\partial_s\rhog(S_u,.)
			\right]\,\dif \alpha\right|
\\
&\qquad\leq
	\left|\partial_s\rhog(S_u,.) (R_u^n-R_u)\right|
	+
	\frac{K_{gb}}{2\,\sqrt{n}}\,(R_u^n)^2
\end{align*}
which converges towards 0, in distribution, by Lemma \ref{esp.fini}.
Hence, from Lemma \ref{esp.fini} again and the dominated convergence theorem in \eqref{def.R.as.lim.int}, the conclusion follows.
\end{proof}

\begin{lemma}[Semi-martingale decomposition]
The process $(M_t)_{t\in[0,T]}$ defined for any $f\in W_3$ by
\begin{align}
\nonumber
\ps{M_t}{f}
	&=
	\ps{\eta_t}{f}-\ps{\eta_0}{f}
	-\int_0^t \int_\X
		\biggl[
			\taudiv(S_u,x)\,
					\int_0^1[f(\alpha\,x)+f((1-\alpha)\,x)-f(x)]
					\,Q(\dif \alpha)
\\
\nonumber
	& \qquad \qquad \qquad \qquad\qquad\qquad
		-D\,f(x)
		+g(S_u,x)\,f'(x)
		\biggr]\,\eta_u(\dif x)\,\dif u	
\\
\nonumber
	&\quad 
	-
	 \int_0^t R_u\,
	 	\int_\X
	 		\biggl[
					\partial_s\taudiv(S_u,x)\,
					\int_0^1[f(\alpha\,x)+f((1-\alpha)\,x)-f(x)]
					\,Q(\dif \alpha)
\\
\label{def.Mt}
	& \qquad\qquad \qquad\quad
		+\partial_s\rhog(S_u,x)\,f'(x)
		\biggr]\,\xi_u(\dif x)\,\dif u
\end{align}
has the same law as the process $\gau$ defined in Lemma \ref{cv.suite.martingales}.
\end{lemma}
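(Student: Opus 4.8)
The plan is to show that the process $(\langle M_t, f\rangle)_{t \in [0,T]}$ defined by \eqref{def.Mt} is obtained as the limit in distribution of the martingales $(\langle M^n_t, f\rangle)_{t\in[0,T]}$ defined in \eqref{def.Mn}--\eqref{var_qua}, and then to invoke Lemma \ref{cv.suite.martingales} for the identification of its law. Concretely, recall from \eqref{renormalisation}--\eqref{def.Mn} that $\langle M^n_t, f\rangle = \langle \eta^n_t, f\rangle - \langle A^n_t, f\rangle$, where $\langle A^n_t, f\rangle$ is given by \eqref{def.Ant}. First I would rewrite $\langle A^n_t, f\rangle$ by performing the same Taylor expansion of $g$ and $b$ around $S_u$ that was used in the proof of Lemma \ref{prop.identification.Rt}: for the terms carrying the $\sqrt n$ prefactor, write $\sqrt n\,(g(S^n_u,x)-g(S_u,x)) = R^n_u\int_0^1 \partial_s g(S_u + \tfrac{\alpha}{\sqrt n}R^n_u, x)\,\dif\alpha$ and similarly for $b$, so that these terms converge (using the uniform moment bound of Lemma \ref{esp.fini} to control the $O(1/\sqrt n)$ remainder and Assumption \ref{hyp.model} for the Lipschitz bound on $\partial_s g, \partial_s b$) towards $R_u\,\partial_s g(S_u,x)$ and $R_u\,\partial_s b(S_u,x)$ respectively. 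The remaining (non-$\sqrt n$) integrand in \eqref{def.Ant} is an affine continuous functional of $(\eta^n_u, S^n_u)$, so by the convergence $(\widebar\eta^n, S^n) \Rightarrow (\widebar\eta, S)$ and the uniform integrability provided by Lemma \ref{esp.fini}, $\langle A^n_t, f\rangle$ converges in distribution to exactly the finite-variation part appearing in \eqref{def.Mt}.

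Next I would combine this with the convergence $\langle \eta^n_t, f\rangle \to \langle \eta_t, f\rangle$ (a coordinate of the already-established limit) to conclude that $\langle M^n_t, f\rangle \to \langle M_t, f\rangle$ in distribution, where $\langle M_t,f\rangle$ is the right-hand side of \eqref{def.Mt}; this should be promoted to a functional (process-level) convergence on $\DD([0,T],\RR)$ using the tightness estimates of Step 1 together with the uniform jump bound \eqref{maj.delta.M}, exactly as in the proof of Lemma \ref{cv.suite.martingales}. On the other hand, Lemma \ref{cv.suite.martingales} already gives $\langle M^n, f\rangle \Rightarrow \langle G, f\rangle$ for every $f \in \C^0(\X) \supset W_3$. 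By uniqueness of the limit in distribution, $(\langle M_t, f\rangle)_{t\in[0,T]}$ and $(\langle G_t, f\rangle)_{t\in[0,T]}$ have the same law for each fixed $f$; to upgrade this to equality in law as $W_3^*$-valued (indeed $\C^{0,*}(\X)$-valued) processes, I would note that both $M$ and $G$ are limits in $\DD([0,T],W_3^*)$ of the same tight sequence $(M^n)_n$ — for $G$ this is Lemma \ref{cv.suite.martingales}, and for $M$ it follows since $\langle M^n, f\rangle \Rightarrow \langle M, f\rangle$ for all $f$ in a dense subset together with the tightness of $(M^n)$ in $\DD([0,T],W_3^*)$ forces $M^n \Rightarrow M$ in $\DD([0,T],W_3^*)$. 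Hence $M$ and $G$ have the same law.

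The main subtlety — more bookkeeping than genuine obstacle — is the handling of the $\sqrt n$ terms: one must check that the subtraction $\sqrt n(\langle \xi_u, (g(S^n_u,\cdot)-g(S_u,\cdot))f'\rangle)$ and its $b$-analogue really do converge to the stated limit jointly with the rest, i.e. that the $O((R^n_u)^2/\sqrt n)$ error vanishes in the relevant (process-level, uniform-in-$u\le t$) sense; Lemma \ref{esp.fini} gives $\sup_n \EE[\sup_{u\le T}\|\widebar\eta^n_u\|^2_{\H_0}] < \infty$, which bounds $\sup_{u\le T}|R^n_u|^2$ in $L^1$ uniformly and so makes the error term tend to $0$ in $L^1$, hence in probability, uniformly in $t$. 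The other point requiring a line of care is that the convergence of $\langle A^n,f\rangle$ and the convergence of $\langle M^n,f\rangle$ must be coordinated with the already-fixed limit $\eta$ so that the identity \eqref{def.Mt} holds for the \emph{same} limiting object; since we assumed (without loss of generality, after extracting a subsequence) that the whole sequence $\widebar\eta^n$ converges to $\widebar\eta$, all these limits live on a common probability space by Skorokhod representation, and passing to the limit in $\langle M^n_t,f\rangle = \langle \eta^n_t,f\rangle - \langle A^n_t,f\rangle$ is then immediate.
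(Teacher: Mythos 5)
Your proposal is correct and follows essentially the same route as the paper: both identify the limit of $\ps{M^n}{f}$ simultaneously as $\ps{\gau}{f}$ (via Lemma \ref{cv.suite.martingales}) and as the right-hand side of \eqref{def.Mt}, using the Taylor expansion of $g$ and $\taudiv$ in $s$ together with the moment bound of Lemma \ref{esp.fini} to kill the $O((R^n_u)^2/\sqrt n)$ remainders. The only organizational difference is that the paper avoids the Skorokhod representation by writing the candidate limit as a functional $\Psi^f_t(\zeta)$ of $\zeta=\eta^n$ alone (with $R^\zeta$ defined by the integral equation \eqref{eq.R.zeta}), proving its continuity at continuous points of $\DD([0,T],W_3^*)$ and showing $\Psi^f_t(\eta^n)-\ps{M^n_t}{f}\to 0$ in probability, which requires an extra Gronwall comparison of $R^n$ with $R^{\eta^n}$ that your joint-convergence argument sidesteps.
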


\begin{proof}
We define, for any $\zeta \in \DD([0,T],W_3^*)$, $f\in W_3$, $t\in[0,T]$,
\begin{align}
\nonumber
\Psi^f_t(\zeta)
	&=
	\ps{\zeta_t}{f}-\ps{\zeta_0}{f}
	-\int_0^t \int_\X
		\biggl[ 
		\taudiv(S_u,x)\,
			\int_0^1[f(\alpha\,x)+f((1-\alpha)\,x)-f(x)]
					\,Q(\dif \alpha)
\\
\nonumber
	& \qquad \qquad \qquad \qquad \qquad \qquad
			-D\,f(x)
			+g(S_u,x)\,f'(x)
			\biggr]\,\zeta_u(\dif x)\,\dif u
\\
\nonumber
	&\quad
	 	- \int_0^t R^\zeta_u\, 
		\int_\X\biggl[
					\partial_s\taudiv(S_u,x)\,
					\int_0^1[f(\alpha\,x)+f((1-\alpha)\,x)-f(x)]
					\,Q(\dif \alpha)
\\
\label{def.psi.f.t}
	&\qquad\qquad\qquad\quad
	+\partial_s\rhog(S_u,x)\,f'(x)
		\biggr]\,\xi_u(\dif x)\,\dif u
\end{align}
where
\begin{align}
\label{eq.R.zeta}
R^\zeta_t = 
	\int_{0}^t\biggl[
		- D \, R^\zeta_u
		- \frac kV \, \ps{\zeta_u}{\rhog(S_u,.)}
		- R^\zeta_u \, \frac kV \, \ps{\xi_u}{\partial_s g(S_u,.)}
	\biggr]\,\rmd u\,.
\end{align}

Following, for example, the approach of \cite[Lemma 5.8]{campillo2014a}, we can prove that $\zeta \mapsto \Psi^f_t(\zeta)$ is continuous from $\mathbb{D}([0,T],W_3^*)$ to $\mathbb{R}$ in any point $\zeta \in \C([0,T], W_3^*)$.
Indeed, using some rough bounds, we have the existence of a constant $C>0$, such that 
$$
|\Psi^f_t(\zeta) - \Psi^f_t(\widetilde{\zeta})| \leq C \sup_{s \in [0,T]} \left\| \zeta_s - \widetilde{\zeta}_s \right\|_{W_3^*}
$$
and on continuous points, the Skorohod topology coincides with the uniform topology. However, as for \eqref{maj.delta.M},
$$
	|\ps{\eta_t^n}{f}-\ps{\eta^n_{t-}}{f}|\leq \frac{\norme{f}_\infty}{\sqrt{n}}
$$
therefore, $\eta$ is a continuous process and then $\lim_{n \to \infty} \Psi_t^f(\eta^n)=\Psi_t^f(\eta)$ in distribution.
By Lemma \ref{cv.suite.martingales}, it is sufficient to prove the proposition that $(\Psi^f_t(\eta^n))_t$ and $\ps{M^n_t}{f}$ converge in distribution towards the same limit.

As $\ps{\eta^n_t}{f} = \ps{A^n_t}{f} + \ps{M^n_t}{f}$, by \eqref{def.Ant} and \eqref{def.psi.f.t},
\begin{align*}
\Psi_t^f(\eta^n)-{M_t^n}{(f)}
	&= B_t^n(f) + C_t^n(f)
\end{align*}
with
\begin{align*}
B_t^n(f)
	&=	 
   	\int_0^t 
	 		\ps{\xi_u}{ 
	 			\left[
	 				\sqrt{n} \,\left(\rhog(S_u^n,.)-\rhog(S_u,.)\right)
					-R_u^{\eta^n}\,\partial_s\rhog(S_u,.)
	 			\right]\,f'} 
	 		\, \dif u 
\\
	&\quad
	+
	\int_{0}^t
	 \intX
	 \left[
	 	\sqrt{n} \,\left(\taudiv(S^n_u,x)-\taudiv(S_u, x)\right)
	 	- R_u^{\eta^n}\,\partial_s\taudiv(S_u, x)
	 \right]\,
\\
	&\qquad \qquad  \qquad
	 \int_0^1
	 	\Bigl[f(\alpha\, x)+ f((1-\alpha)\,x)-f(x)\Bigr] \, 	 			
			Q(\dif \alpha) \,\xi_u(\dif x) \,\dif u 
\end{align*}
and
\begin{align*}
C_t^n(f)
	&=	
		\int_0^t 
		\biggl[
	 		\ps{\eta_u^n}{(\rhog(S_u^n,.)-\rhog(S_u,.))\,f'} 
\\
\nonumber 
  &\quad
  + \intX
  	(\taudiv(S^n_u,x)-\taudiv(S_u,x)) \, 
	 \int_0^1  
			  \bigl[ f(\alpha \, x)+f((1-\alpha)\,x)-f(x)\bigr] \, 
			  Q(\dif \alpha)\,\eta_u^n(\dif x) \, \biggr] \dif u \,.
\end{align*}
By the same approach using in the proof of Lemma \ref{prop.identification.Rt}, we get
\begin{align}
\label{maj.sqrtng}
\left|\sqrt{n} \,\left(\rhog(S_u^n,.)-\rhog(S_u,.)\right)
					-R_u^{\eta^n}\,\partial_s\rhog(S_u,.)\right|
& \leq
	\left|\partial_s\rhog(S_u,.) (R_u^n-R_u^{\eta^n})\right|
	+
	\frac{K_{gb}}{2\,\sqrt{n}}\,(R_u^n)^2
\end{align}
and
\begin{align*}
 \left|\sqrt{n} \,\left(\taudiv(S^n_u,x)-\taudiv(S_u, x)\right)
	 	- R_u^{\eta^n}\,\partial_s\taudiv(S_u, x)\right|
\leq
	\left|\partial_s\taudiv(S_u,x) (R_u^n-R_u^{\eta^n})\right|
	+
	\frac{K_{gb}}{2\,\sqrt{n}}\,(R_u^n)^2\,.
\end{align*}
Hence,

\begin{align*}
\sup_{t\leq T}|B_t^n(f)|
&\leq
	 (\norme{f'}_\infty\,\norme{\partial_s g}_\infty
	 +3\,\norme{f}_\infty\,\norme{\partial_s b}_\infty)\,C_T^\xi\,T\,
	 \sup_{t\leq T}|R_t^n-R_t^{\eta^n}|
\\
	&\qquad
	+(\norme{f'}_\infty+3\,\norme{f}_\infty)\,K_{gb}\,
	\frac{C_T^\xi\,T}{2\,\sqrt{n}}\,\sup_{t\leq T}(R_t^n)^2,
\end{align*}
where $C^\xi_T$ was defined in \eqref{sup.xi}.
From Lemma \ref{esp.fini}, the second term converges towards 0 in probability. 
By \eqref{def.Rnt}, \eqref{eq.R.zeta} and \eqref{maj.sqrtng}

\begin{align*}
|R_t^n-R_t^{\eta^n}|
	&\leq	
	\left(D + \frac{k}{V}\,\norme{\partial_s g}_\infty\,C^\xi_T \right)\, 
	\int_0^t |R_u^n-R_u^{\eta^n}|
	+ K_{gb}\,\frac{k}{V}\,\int_0^t |S_u^n-S_u|\,\ps{\eta_u^n}{1}\,\dif u
\\
	&\quad
	+ \frac{k}{V}\,\frac{K_{gb}}{2\,\sqrt{n}}\, T\,C^\xi_T\,\sup_{t\leq T}|R_t^n|^2  \,.
\end{align*}

From Lemma \ref{esp.fini} and by the Gronwall lemma, we deduce that $(R_t^n-R_t^{\eta^n})_{t\leq T}$ converges uniformly towards 0 in probability and then that $(B_t^n(f))_{t\leq T}$ converges uniformly towards 0 in probability. Furthermore,
\begin{align*}
\sup_{t\leq T}|C_t^n(f)|
	& \leq
		(\norme{f'}_{\infty}+3\,\norme{f}_\infty)\,\frac{K_{gb}\,T}{C_1'}\,
		\sup_{t\leq T}\norme{\eta_t^n}_{W_2^*} 
		\,\sup_{t\leq T}\left|S_t^n-S_t\right|\,
\end{align*}
where that $C_1'$ was defined in \eqref{ineg.normes}.
The sequence $S^n$ converges in distribution towards $S$ then by Lemma \ref{esp.fini}, we deduce that $\sup_{t\leq T}|C_t^n(f)|$ converges in probability towards $0$. Finally, $(\Psi^f_t(\eta^n))_t$ and $\ps{M^n_t}{f}$ have the same limit $\ps{G}{f}$ in distribution. 
\end{proof}

To conclude the proof of Theorem \ref{th:tcl-intro}, it rests to prove the uniqueness of the solution of \eqref{def.Mt} but it can be easily proved via the classic argument involving Gronwall lemma.

\subsection{Proof of Theorem \ref{th:tcl-CY}}
\label{sec.proof.th:tcl-CY}

The proof is quite similar to the proof of Theorem \ref{th:tcl-intro} so we do not provide all details. 

Firstly, similarly to Lemma \ref{esp.fini}, we can use Lemma \ref{lem:martinagle} (with $f\equiv 1$), Doob's inequality, Gronwall lemma and some rough bounds to show that
\begin{equation}
\label{eq:CYmoment}
	\sup_{n\geq 1}\EE\left(\sup_{t\leq T} |N^n_t|^2 + |S^n_t|^2 + |Q_t^n|^2+|R^n_t|^2 \right) < +\infty\,.
\end{equation}
Indeed, note that one can bound $\mu(S^n_s)$ by $\widebar \mu = \max_{0\leq s \leq \Sin \vee S_0} \mu(s)$ for every $s\geq 0$, because $S^n$ remains in $[0, \Sin \vee S_0]$; see for instance \cite[Proposition 2.1]{collet2013}.

Using Equation \eqref{eq:CYmoment} and Markov inequality, we obtain, as in the proof of Theorem \ref{th:tcl-intro} that $(N^n, S^n, Q^n, R^n)_{n\geq 1}$  satisfies the Aldous Robolledo criterion \cite[Corollary 2.3.3.]{joffe1986a} and then that $(N^n, S^n, Q^n, R^n)_{n\geq 1}$ is tight in $\DD([0,T],\RR_+ \times [0, \Sin \vee S_0] \times \RR^2)$.

It remains to show the uniqueness of the limit point. 
Using Lemma \ref{lem:martinagle}, we can see that each limit point of the sequence $(N^n,S^n)$ is solution to the classic chemostat ODE ($i.e.$ the two first equations of \eqref{eq:CYEDS}) and then by uniqueness of the solution, it converges to $(N,S)$. Let now study a limit of a convergent subsequence of $(Q^n,R^n)_n$. Following the way of Lemma \ref{cv.suite.martingales} with $f\equiv 1$, we obtain that 
$
  \sup_{t\in[0,T]} |\Delta \ps{M_t^n}{1}|\leq \frac{1}{\sqrt{n}}\,.
$
Moreover, we have the following convergence in distribution,
$$
	\lim_{n\to\infty}\crochet{\ps{M^n}{1}}_{t\in[0,T]}
	=
	\left(\int_0^t (\mu(S_u)+D) \, N_u \, \dif u\right)_{t\in[0,T]}\,.
$$ 
Then, by \cite[Theorem 3.11 page 473]{jacod2003a}, we deduce the convergence, in distribution, of $((\ps{M_t^n}{1})_{t\in[0,T]})_{n\geq 1}$ towards $(\int_0^t \sqrt{(\mu(S_u)+D)\,N_u}\,\dif B_u)_{t\in[0,T]}$.
The end of the proof is then as in the proof of Theorem \ref{th:tcl-intro}.

\begin{remark}[Infinite dimensional case when $M= \infty$]
\label{rq:extension}
According to the proof of Theorem \ref{th:tcl-CY}, we see that to obtain the convergence from finite $M$ to infinite $M$ ($i.e.$ non compact support for the mass) for the finite-dimensional process then it is enough to prove that the uniform bound of Lemma \ref{esp.fini} remains valid (that is equation \ref{eq:CYmoment}).

The situation is more tricky in infinite dimension. Indeed, firstly, we crucially need the convergence of $\widebar{\nu}^n$ to $\xi$ in the space of positive measure endowed with the weak topology in the proof of Theorem \ref{th:tcl-intro}. \cite{campillo2014a} proved the convergence in the space of positive measure endowed with the vague topology. Although vague and weak topologies coincide on a compact space, it is not valid anymore on non-compact sets. Extending the convergence to the weak topology is not trivial; see for instance \cite{C11,MT12}. Also, note that Inequalities \eqref{ineg.normes} are also no longer valid in infinite dimension.
\end{remark}

\section{The Crump-Young model}
\label{sect:crump-young}

In this section, we propose an application of the previously demonstrated central limit theorem (Theorem \ref{th:tcl-CY}) to understand the Crump-Young model. In particular, Section \ref{sect:proofCY} contains the proof of Theorem \ref{th:intro-CY}, which gives an approximation of the long-time behavior of the Crump-Young model (see \eqref{invariant.mesure2}). The process $( S^n_t,N^n_t)_{t\geq 0}$ satisfies the Markov property and is generated by the following infinitesimal generator
\begin{align*}
\mathcal{L} f(s,\ell) 
&= \left[D(\Sin-s) - \frac{km}{nV} \mu(s)\,\ell\right] \partial_s f(s,\ell) + \mu(s)\, \ell\, \left(f(s,\ell+1)-f(s,\ell)\right)\\
&\qquad + D \,\ell\,\left(f(s,\ell-1)-f(s,\ell)\right),
\end{align*}
for all $\ell\geq 0, s\geq 0$ and smooth $f$. This model is a particular case of the general model of \cite{campillo2014a}, where we suppose that division rate and the growth rate (\textit{per capita}) do not depend on the mass of the bacteria. This is a rough assumption which enables us to considerably weaken the dimension of the problem (from an infinite dimension to two dimensions). Our main result implies that it can be approximated by \eqref{SDE} and this diffusion process will be the main object of interest. Note that, through the function $\mu$, we introduce a parameter $m$ which can be understood as the mean size of one bacterium induced by the mass-structured model. Indeed, if we consider the integro-differential equation \eqref{eq.limite.eid.faible} with parameters given by \eqref{eq:parameterCY} and we set 
$$
\forall t\geq0, \ N_t= \int_\X \xi_t(dx), \quad Y_t= \int_\X  x \ \xi_t(dx),
$$
where $N_t$ represents the number of individuals at time $t$ and $Y_t$ the biomass. As pointed out by \cite[Section 5.4]{campillo2014a}, one can prove that these two quantities can be described as a solution to the classic chemostat equations. Moreover 
\begin{align*}
\frac{\dif}{\dif t} \frac{Y_t}{N_t}
	&=
		\mu(S_t)\,\left(m-\frac{Y_t}{N_t}\right)\,,
\end{align*}
and then $\frac{Y_t}{N_t}$  converges to $m$ when $t$ tends to infinity. Before studying rigorously the behavior of the system \eqref{eq:CYEDS}, let us end this section by a remark on the modelling.

\begin{remark}[Reinforced process for indirect interactions]
Consider the system \eqref{eq:CYEDS}, with starting points $N_0 = N^*$, $S_0=S^*$, $R_0=0$, $Q_0\in \mathbb{R}$, where $(N^*,S^*)\neq (0,\Sin)$ is some equilibrium of the two first equations. As $\mu(S^*)=D$, the system then reduces to
\begin{equation*}
     \begin{cases}
\dif Q_t 	&= \mu'(S^*)\, R_t \, N^* \, \dif t	+ \sqrt{2\,D\,N^*}\,\dif B_t, \\
\dif R_t &= -\left[	D\,R_t+	\frac kV \, \mu(S^*)\,m\,Q_t	+ \frac kV \,R_t \, \mu'(S^*)\,m\,  N^* \right] \dif t.
     \end{cases}
\end{equation*}
In particular the second equation became a simple linear (ordinary) differential equation and then by the variation of constants method, we have
$$
	R_t = 	- \frac{k}{V}\,m\,\mu(S^*)
		\int_0^t e^{-(D+\frac{k}{V}\,m\,\mu'(S^*)\,N^*)\,(t-s)}\,Q_s\,\dif s.
$$
Hence
$$
\dif Q_t
	= -\mu'(S^*)\,N^*\,\frac{k}{V}\,m\,\mu(S^*)
		\int_0^t e^{-(D+\frac{k}{V}\,m\,\mu'(S^*)\,N^*)\,(t-s)}\,Q_s\,\dif s
		 \,\dif t
	+\sqrt{2\,D\,N^*}\,\dif B_t\,.
$$ 

The solution of this equation then represents the evolution of the population around an equilibrium under an indirect competition (presence of substrate). This process belongs to the large class of self-interacting diffusions; see \cite{Gadat2014,Gadat2015} and reference therein. These processes are not Markov and if, more generally,
$$
\dif Q_t
	= -\int_0^t \kappa(t-s) \,Q_s\,\dif s
		 \,\dif t
	+\sqrt{2\,D\,N^*}\,\dif B_t\,,
$$
for some function $\kappa$, then $\kappa$ represents the memory of the substrate consumption. In a different context than the chemostat, one can imagine a different function $\kappa$ to model an indirect interaction which can influence the size of the population.
 \end{remark}
 
\subsection{Proof of theorem \ref{th:intro-CY}}
\label{sect:proofCY}

In this section, we study the solution of the system of equations \eqref{eq:CYEDS} under the assumptions of Theorem \ref{th:intro-CY}.
 
Firstly, let us see that the two first equations of \eqref{eq:CYEDS} forms a homogeneous system of ODE. It is the classic chemostat equations; see \cite{smith1995a}. In particular, as the specific growth rate $\mu$ is supposed to be increasing, the couple $(N_t,S_t)_{t\geq 0}$ admits only two equilibria that are $(0,\Sin)$, which is usually called the washout and corresponds to the extinction of the population, and another $(N^*,S^*)$ corresponding to the unique solution of 
$$
\mu(S^*)=D \qquad \text{and} \qquad
N^* = \frac{V }{km} (\Sin -S^*).
$$
Moreover, we have
$$
\dif \left(S_t + \frac{mk}{V} N_t\right) = D \left(\Sin -\left(S_t + \frac{mk}{V} N_t\right)\right) \dif t,
$$
and then
$$
\lim_{t \to \infty} S_t + \frac{mk}{V} N_t = \Sin.
$$
Also a calculus of the Jacobian at these two points shows that $(N^*,S^*)$ is stable while $(0,\Sin)$ is unstable. As a consequence, the Poincaré-Bendixson  theorem (see for instance \cite[Page 9]{smith1995a}) entails that, whatever the initial condition $(N_0,S_0) \in \mathbb{R}_+^* \times \mathbb{R}_+$, the following deterministic convergence holds:
$$
\lim_{t \to \infty} (N_t,S_t)=(N^*,S^*).
$$
Now, let us study the dynamics of $(Q_t,R_t)_{t\geq 0}$. We set $Z_t=(Q_t,R_t)^T$, then
\begin{equation}
\label{eq:OU}
	\dif Z_t = A_t\,Z_t\,\dif t + C_t\, \dif B_t,
\end{equation}
where
$$
A_t =
	\begin{pmatrix}
   		\mu(S_t)-D & \mu'(S_t)\,N_t \\
   		-\frac{k}{V}\,\mu(S_t)\,m	& -(D+\frac{k}{V}\,\mu'(S_t)\,N_t\,m)
	\end{pmatrix},
 \qquad
C_t =
	\begin{pmatrix}
   		\sqrt{(\mu(S_t)+D)\,N_t}\\
   		0
	\end{pmatrix}.
$$
In particular, one can think $(N_t,S_t,Q_t,R_t)_{t\geq 0}$ as a homogeneous-time Markov process or only $(Q_t,R_t)_{t\geq 0}$ as an inhomogeneous one.
Equation \eqref{eq:OU} is now linear, and classically we set $\widetilde Z_t = e^{-\int_0^t A_s\,\dif s}\, Z_t$, to obtain
$\dif \widetilde Z_t =  e^{-\int_0^t A_s\,\dif s}\, C_t\, \dif B_t$ and
\begin{align}
\label{eq.exp.Z}
	Z_t = e^{\int_0^t A_s\,\dif s}\,Z_0
	+
	\int_0^t e^{\int_s^t A_u\,\dif u}\, C_s\,\dif B_s\,.
\end{align}
Therefore, for all $t\geq 0$, the law of $Z_t$ is a Gaussian distribution of mean $e^{\int_0^t A_s\,\dif s}\,\EE(Z_0)$ and variance matrix $\Sigma_t$ given by
$$
	\Sigma_t:= 
		\int_0^t
			e^{\int_s^t A_u\,\dif u}\, C_s\,
			C_s^T\,e^{\int_s^t A^T_u\,\dif u}\, \dif s.
$$
To prove the convergence in law of $Z_t$ to a Gaussian variable, it is then enough to study the convergence of its mean and its variance. Note that the eigenvalues of $A_s$ are
$$
	\lambda^1_s= \mu(S_s)-D-\frac{k}{V}\,m\,\mu'(S_s)\,N_s, \quad \lambda^2_s= -D,
$$
which are (at least for large $s$ because $(N_s,S_s) \to (N^*,S^*)$) negative because $\mu(S^*)=D$ and $\mu'>0$. Nevertheless, it does not directly imply the convergence of the mean; see for instance \cite[Example 2.2]{Amato2006}. However, we have 
$$
A_\infty := \lim_{t \to \infty} A_t = 
\begin{pmatrix}
   		0 & \mu'(S^*)\,N^* \\
   		-\frac{k}{V}\,\mu(S^*)\,m	& -(D+\frac{k}{V}\,\mu'(S^*)\,N^*\,m)
	\end{pmatrix},
$$
whose eigenvalues are $\lambda_+= -\frac{k}{V}\,m\,\mu'(S^*)\,N^*$ and $\lambda_-= -D$, and then, by a Cesàro-type theorem, \cite[Theorem 2.9]{Amato2006}, we have
$$
\lim_{t \to \infty} e^{\int_0^t A_s\,\dif s}=0.
$$
We have then obtained the convergence of the mean, it rests to prove the convergence of the variance matrix to
$$
\Sigma_\infty = \int_0^\infty e^{A_\infty \,u}\,C_\infty \,C_\infty^T\,e^{A^T_\infty\,u}\,\dif u,
$$
where
$$
C_\infty :=
	\begin{pmatrix}
   		\sqrt{2D N^*} \\
   		0
	\end{pmatrix} = \lim_{t \to \infty} C_t\,.
$$

 Again by \cite[Theorem 2.9]{Amato2006}, there exist $C,\alpha>0$ such that
\begin{equation*}
\forall t\geq 0, \ \left\Vert e^{\int_0^t A_s\,\dif s} \right\Vert + \left\Vert e^{\int_0^t A_\infty\,\dif s} \right\Vert \leq C e^{- \alpha t}\,,
\end{equation*}
where $\norme{.}$ is the standard matrix norm.
Hence, there exists a constant $K$ such that for any $\tau$ and $t \geq \tau$,
\begin{align*}
\Vert \Sigma_t - \Sigma_\infty \Vert
&= \left\Vert \int_0^t
			e^{\int_{t-s}^{t} A_u\,\dif u}\, C_{t-s}\,
			C_{t-s}^T\,e^{\int_{t-s}^{t} A^T_u\,\dif u}\, \dif s - \int_0^\infty e^{A_\infty \,s}\,C_\infty \,C_\infty^T\,e^{A^T_\infty\,s}\,\dif s \right\Vert  \\
&\leq  \int_0^\tau  \left\Vert e^{\int_{t-s}^{t} A_u\,\dif u}\, C_{t-s}\,
			C_{t-s}^T\,e^{\int_{t-s}^{t} A^T_u\,\dif u} - e^{A_\infty \,s}\,C_\infty \,C_\infty^T\,e^{-A_\infty\,s} \right\Vert \dif s\\
&\quad + \int_\tau^\infty 
\left\Vert e^{\int_{t-s}^{t} A_u\,\dif u}\, C_{t-s}\,
			C_{t-s}^T\,e^{\int_{t-s}^{t} A^T_u\,\dif u} \right\Vert \dif s \\	
&\quad + \int_\tau^\infty \left\Vert e^{A_\infty \,s}\,C_\infty \,C_\infty^T\,e^{A^T_\infty\,s} \right\Vert \dif s\\
&\leq  \int_0^\tau  \left\Vert e^{\int_{t-s}^{t} A_u\,\dif u}\, C_{t-s}\,
			C_{t-s}^T\,e^{\int_{t-s}^{t} A^T_u\,\dif u} - e^{A_\infty \,s}\,C_\infty \,C_\infty^T\,e^{A^T_\infty\,s} \right\Vert \dif s\\
&\quad + K e^{- 2\,\alpha \,\tau}.
\end{align*}
The introduction of the variable $\tau$ allows us to obtain an integral whose integration interval does not depend on $t$.
As the second term of the last member is negligible for large $\tau$, by dominated convergence, it then remains to prove that the last integrand vanishes when $t\to \infty$. This is a direct application of the convergences of $(A_t)_{t\geq0}$, $(C_t)_{t \geq 0}$ and the continuity of the various applications (exponential, product...). Also note that $\int_{t-s}^{t} A_u \dif u = \int_{0}^{s} A_{t-u} \dif u$. This concludes the proof of the convergence of $(\Sigma_t)_{t\geq 0}$ and then of the convergence of $(N_t,S_t,Q_t, R_t)_{t\geq 0}$. Let us finally express the calculus of $\Sigma_\infty$. 
We have 
$$
e^{A_\infty\,s}
	=
	\frac{1}{(D-L)}\,
	\begin{pmatrix}
   		(D\,e^{-L\,s}-L\,e^{-D\,s})
   		& \mu'(S^*)\,N^*\,(e^{-L\,s}-e^{-D\,s}) \\
   		\frac{k}{V}\,m\,D\,(e^{-D\,s}-e^{-L\,s})	& 
   		D\, e^{-D\,s}-L\,e^{-L\,s}
	\end{pmatrix}
$$
where $L=\frac{k}{V}\,m\,\mu'(S^*)\,N^*=(\Sin-S^*)\,\mu'(S^*)$, and then 
\begin{align*}
&e^{A_\infty\,s}\,C_\infty\,C_\infty^T\,e^{A_\infty^T\,s} 
= \frac{2\,D\,N^*}{(D-L)^2}\, \times 
\\
	&\begin{pmatrix}
   		(D\,e^{-L\,s}-L\,e^{-D\,s})^2
   		& \frac{k}{V	}\,m\,D\,(D\,e^{-L\,s}-L\,e^{-D\,s})(e^{-D\,s}-e^{-L\,s})  \\
   		\frac{k}{V	}\,m\,D\,(D\,e^{-L\,s}-L\,e^{-D\,s})(e^{-D\,s}-e^{-L\,s}) 
   		& \left(\frac{k}{V}\,m\,D\right)^2\,(e^{-D\,s}-e^{-L\,s})^2
	\end{pmatrix}\,.
\end{align*}
As a consequence, the term $\int_0^\infty e^{A_\infty\,s}\,C_\infty\,C_\infty^T\,e^{A_\infty^T\,s} \dif s$ is equal to
\begin{align*}
\frac{2\,D\,N^*}{(D-L)^2}\, \times 
	\begin{pmatrix}
   		\frac{D^2}{2\,L} + \frac{L^2}{2\,D} - \frac{2\,D\,L}{D+L}
   		& \frac{k}{V}\,m\,D
   			\left[1-\frac{D}{2\,L}-\frac{L}{2\,D}\right] \\
   		\frac{k}{V}\,m\,D\,
   			\left[1-\frac{D}{2\,L}-\frac{L}{2\,D}\right]
   		& (\frac{k}{V}\,m\,D)^2\,
   		\left[\frac{1}{2\,D}+\frac{1}{2\,L} - \frac{2}{D+L}\right]
	\end{pmatrix}\,.
\end{align*}
Finally
\begin{align*}
\Sigma_\infty
	&=
	\frac{D\,N^*}{L}\, \times 
	\begin{pmatrix}
   		\frac{L^2+3\,L\,D + D^2}{D\,(D+L)}
   		& - \frac{k}{V}\,m \\
   		- \frac{k}{V}\,m
   		& (\frac{k}{V}\,m)^2\, \frac{D}{(D+L)}
	\end{pmatrix}\\
	&= 
	\begin{pmatrix}
   		\frac{\left(\frac{k}{V}\,m\,\mu'(S^*)\,N^*+\frac 32 \, D\right)^2- \frac 54 \, D^2}
   			{\frac{k}{V}\,m\,\mu'(S^*)\,(D+\frac{k}{V}\,m\,\mu'(S^*)\,N^*)}
   		& - \frac{D}{\mu'(S^*)} \\
   		- \frac{D}{\mu'(S^*)}
   		& \frac{k}{V}\,m\, \frac{D^2}{\mu'(S^*)\,(D+\frac{k}{V}\,m\,\mu'(S^*)\,N^*)}
	\end{pmatrix}.
\end{align*}

\begin{remark}[Rate of convergence]
Due to the simple form of \eqref{eq:OU}, one can give some estimates on the rate of convergence. Let $\mathcal{W}_2$ be the (second order) Wasserstein distance, defined for any probability measure $\mu,\nu$ by
$$
\mathcal{W}_2(\mu,\nu)= \inf \mathbb{E}[\Vert X- Y \Vert^2_2]^{1/2},
$$
where $\Vert \cdot \Vert_2$ is the classic Euclidean norm in $\RR^2$, and the infimum runs over all random vectors $(X,Y)$ with $X\sim \mu$ and $Y\sim \nu$. Using \cite[Proposition 7]{givens1984a}, we find, for any $t\geq 0$,
$$
\mathcal{W}_2\left(\mathcal{L}(Z_t), \mathcal{N}(0,\Sigma_\infty)\right)= \left\Vert e^{\int_0^t A_s ds} \right\Vert_2^2 + \textrm{Tr}\left(\Sigma_t + \Sigma_\infty - 2\,\Sigma_t^{1/2}\, \Sigma_\infty\, \Sigma_t^{1/2} \right),
$$
where $\textrm{Tr}$ is the classic trace operator. The decay of the right-hand side depends on the rate of convergence of the two-component ODE towards $(N^*,S^*)$. However, even if we assume that $(N_0,S_0)=(N^*,S^*)$, one can not simplify this expression because, even in this case, $\Sigma_t= \int_0^t e^{A_\infty\,s}\,C_\infty\,C_\infty^T\,e^{A_\infty^T\,s} \dif s$ does not necessary commute with $\Sigma_\infty$. The bound of \cite[Proposition 7]{givens1984a} also induces a bound in Wasserstein distance for the four-component process $(S_t,N_t,Q_t,R_t)_{t\geq 0}$. This is not trivial because it is not the case, for example, in total variation in contrast to $(Z_t)_{t \geq 0}$.
\end{remark}

\begin{remark}[An example of non-increasing growth rate]

Let us consider the following growth rate:
$$
\mu: s\mapsto \frac{\mu_{\max}\, s}{K +s +s^2/C},
$$
where $\mu_{\max},K,C$ are some positive constants. This rate is often called Haldane kinetics in the literature and can sometimes be more realistic in application; see for instance \cite{mailleret2004a}. We assume that $D> \mu(\Sin)$ and $\sup_{s \in [0,\Sin]} \mu(s)> D$. 

In this case there are two solutions of 
$$
\mu(S)=D\,, \qquad 
N = \frac{V }{km} (\Sin -S)\,,
$$
(see Figure \ref{fig.haldane}). Let us denote by $(0,\Sin)$, $(N^*,S^*)$, $(N^{ue},S^{ue})$ the tree equilibria for $(N_t,S_t)_{t\geq 0}$, with $\mu'(S^*)>0$ and $\mu'(S^{ue})<0$. The study of the Jacobian matrix and the Poincaré-Bendixson theorem implies here that, if the ODE system does not start from the unstable equilibrium $(N^{ue}, S^{ue})$ then it necessary converges to one of the two stable equilibria $(N^*,S^*)$ or $(0,\Sin)$, depending on the initial condition. As a consequence, the set of invariant distributions of the process $(N_t,S_t,Q_t,R_t)_{t\geq 0}$ is the convex hull of the Gaussian distributions \eqref{invariant.mesure} (with the stable equilibrium $(N^*,S^*)$ for the Haldane growth) and $\delta_{(0,\Sin,0,0)}$. Indeed, for the stable equilibrium $(0,\Sin)$, the matrix $A_\infty$ is zero as well as the vector $C_\infty$ (when we replace $(N^*,S^*)$ by $(0,\Sin)$) while $\Sigma_t$ explodes for $(N^{ue},S^{ue})$ because $A_\infty$ admits as positive eigenvalue $-\frac{k}{V} m \mu'(S^{ue})N^{ue}$ (again when we replace $(N^*,S^*)$ by $(N^{ue},S^{ue})$).
Also, mimicking the previous proof gives the convergence to one of them according to the starting distribution.
\end{remark}

\begin{figure}
\begin{center}
\includegraphics[width=7cm]{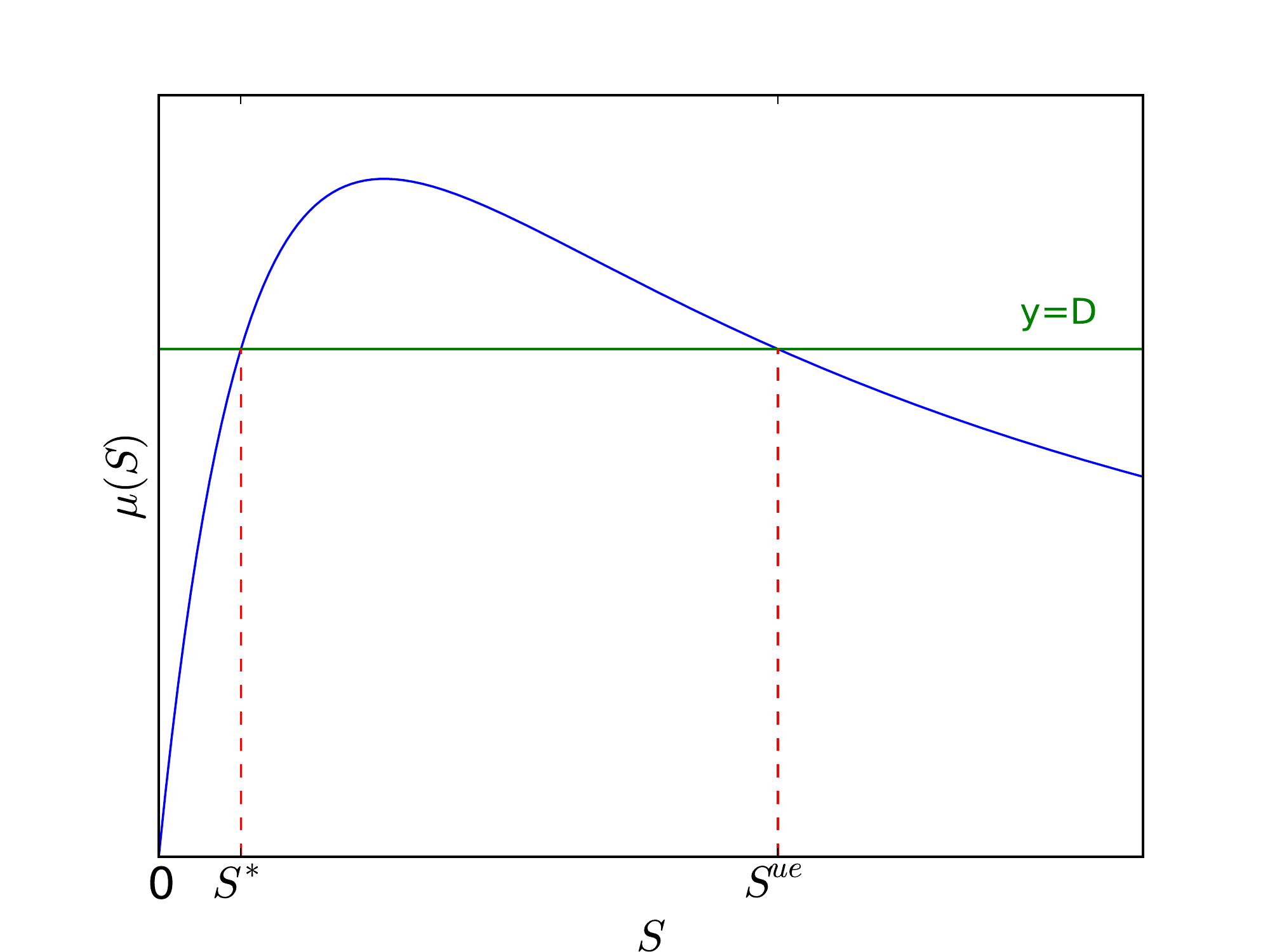}
\end{center}
\caption{Equilibria of the substrate concentration with respect to the Haldane specific growth rate.}
\label{fig.haldane}
\end{figure}

\subsection{Numerical simulations and discussion}
\label{subsec.num.sim}

We use a Gillespie algorithm for the simulation of the Crump-Young model (C-Y) (see Algorithm 1 of \cite{fritsch2015a}) and an Euler method for the simulation of the stochastic differential equations (SDE) \eqref{SDE}. The system of ordinary differential equations (ODE) (two first equations of \eqref{eq:CYEDS}) is solved by the \texttt{odeint} function of the \texttt{scipy.integrate} module of \texttt{Python}. 

\medskip

In general, a chemostat is described by the substrate and the biomass concentrations rather than the substrate concentration and the number of individuals. The biomass concentration is obtained by multiplying the number of individuals by $\frac{m}{V}$, therefore, the graphs are the same up to the multiplicative constant $\frac{m}{V}$.

\subsubsection{Monod growth}
We use the Monod growth parameters of the Escherichia coli bacteria in glucose with a temperature equals to 30 degrees Celsius \citep{monod1942a}, i.e. 
$$
	\mu(S)=1.35\,\frac{S}{0.004+S}\,, \qquad k = 0.23\,,
$$
and with $m = 7\,. 10^{-13}\, g$, $D=0.5$ h$^{-1}$, $S_0=\Sin=0.003$ g.l$^{-1}$

\medskip

\begin{figure*}
\begin{center}
\begin{tabular}{c@{\hskip-0.5em}c@{\hskip-0.5em}c}
\includegraphics[width=5.5cm]{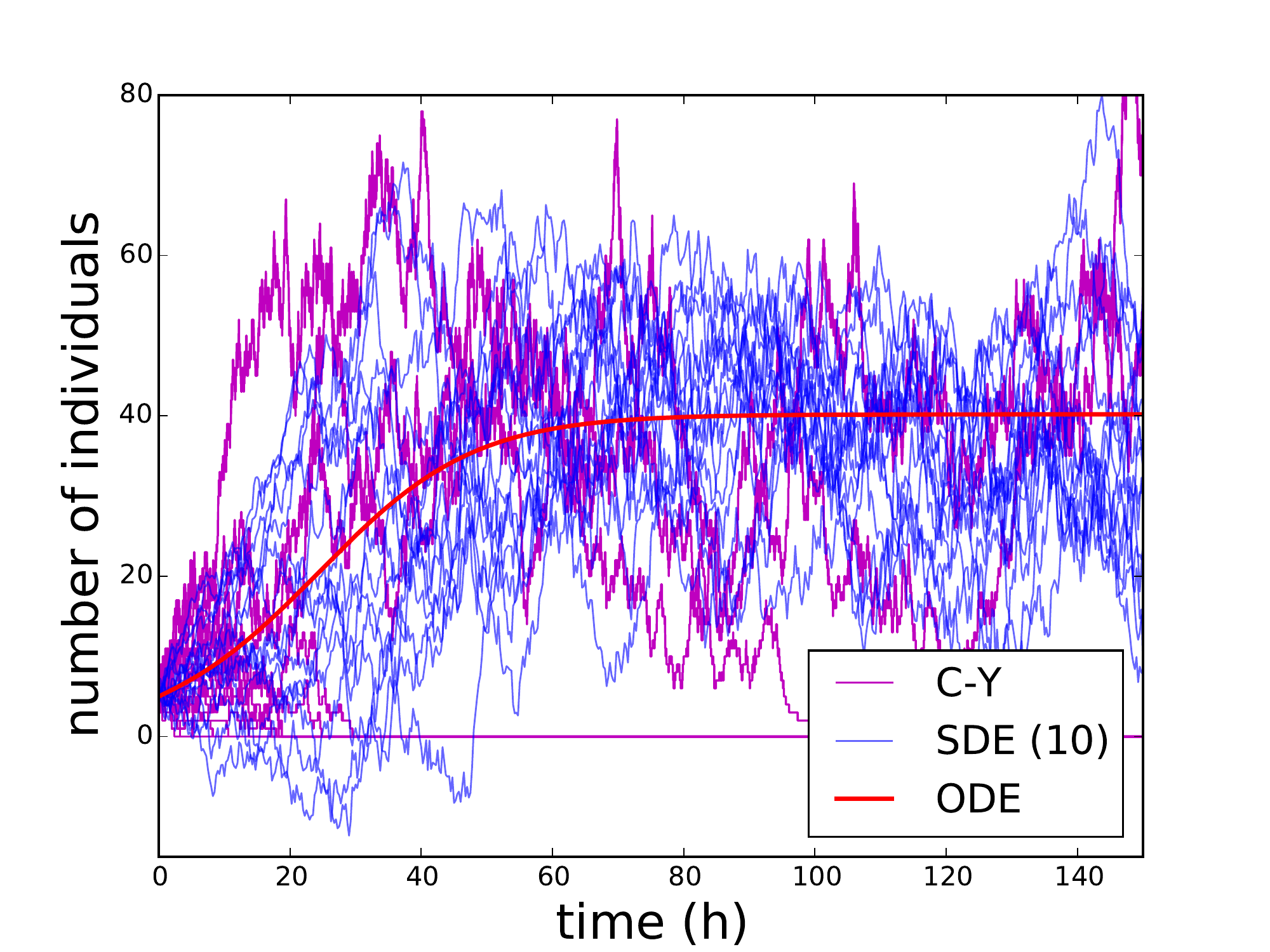}
&
\includegraphics[width=5.5cm]{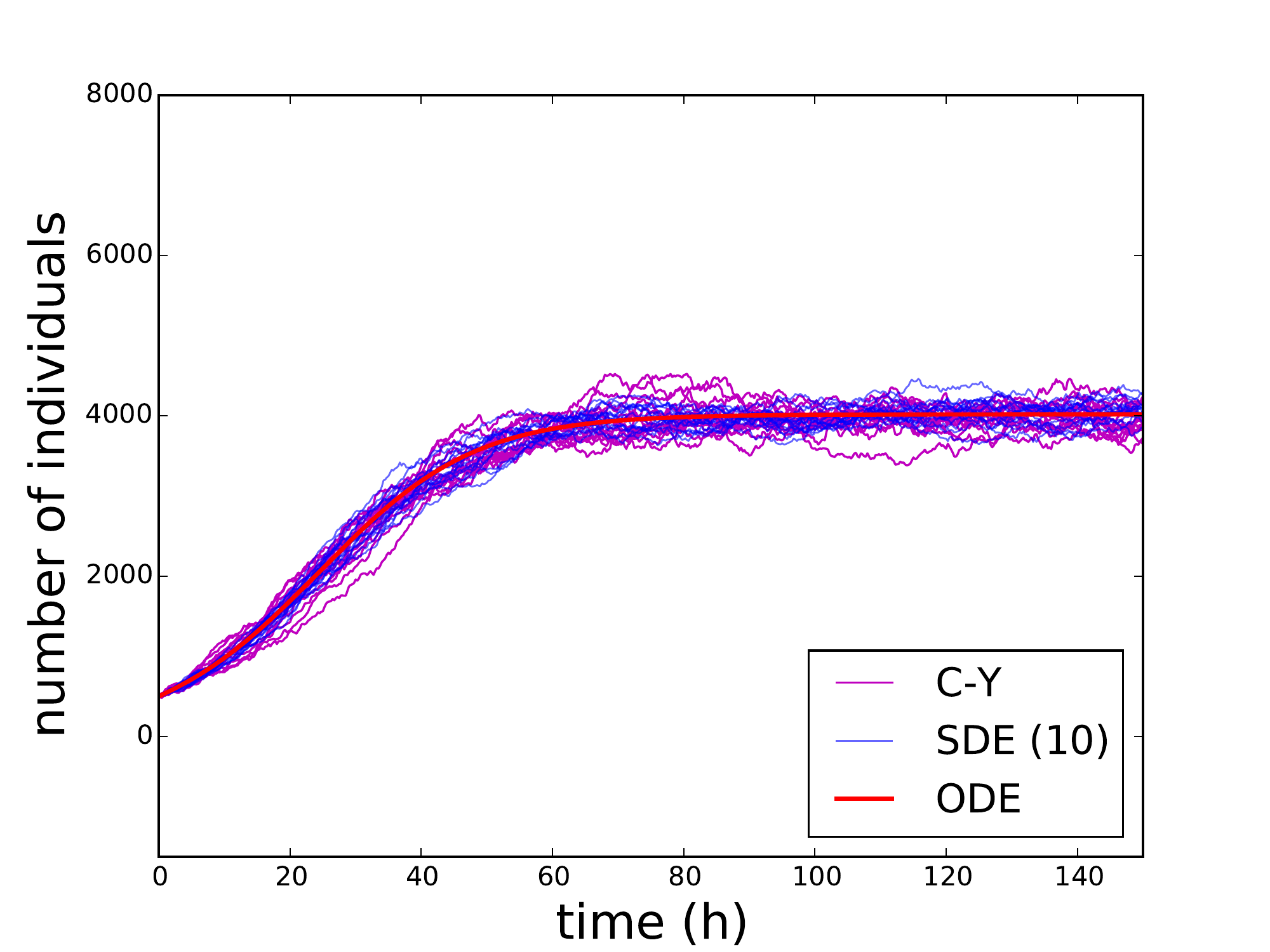}
&
\includegraphics[width=5.5cm]{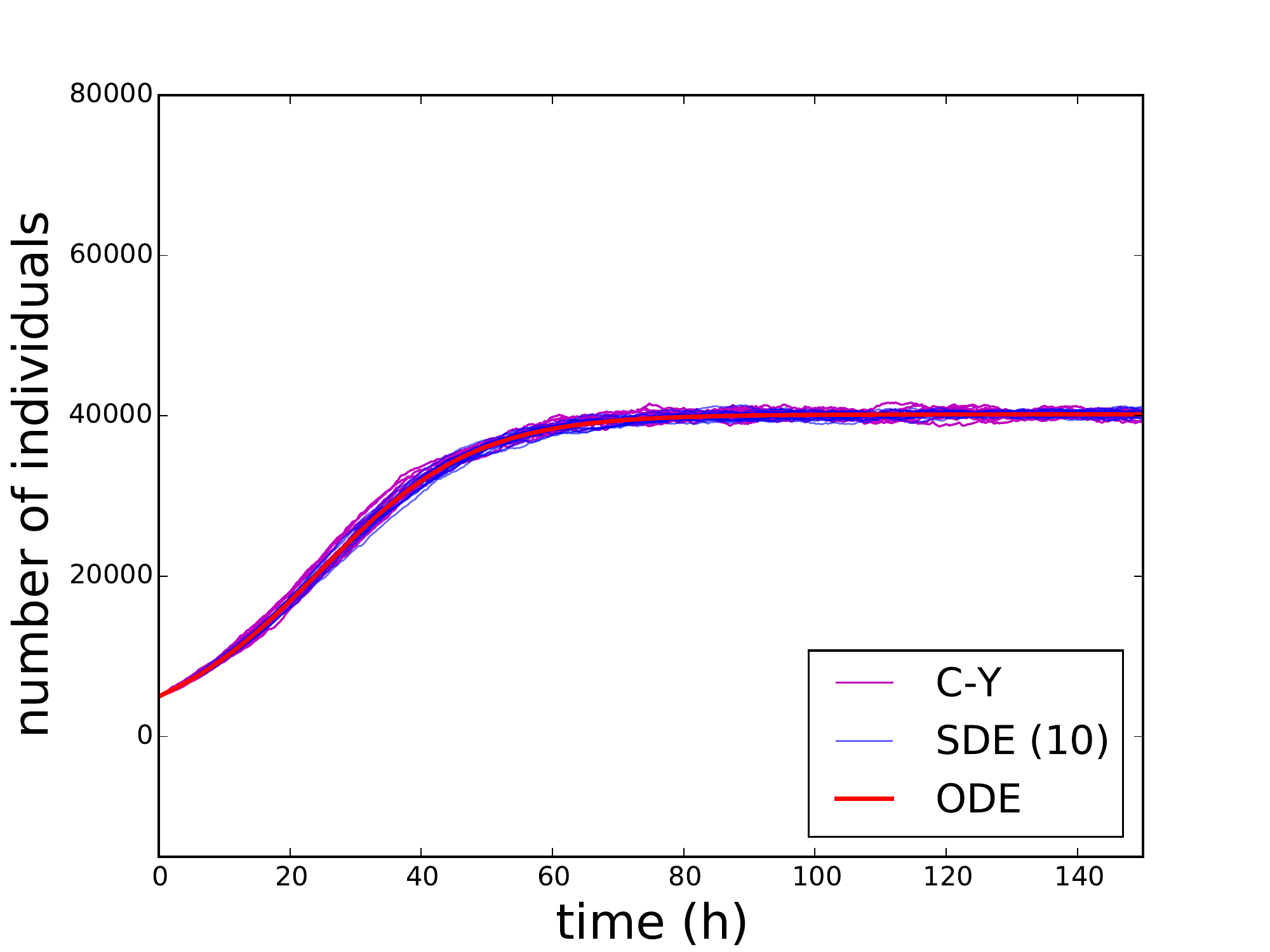}
\\ 
\includegraphics[width=5.5cm]{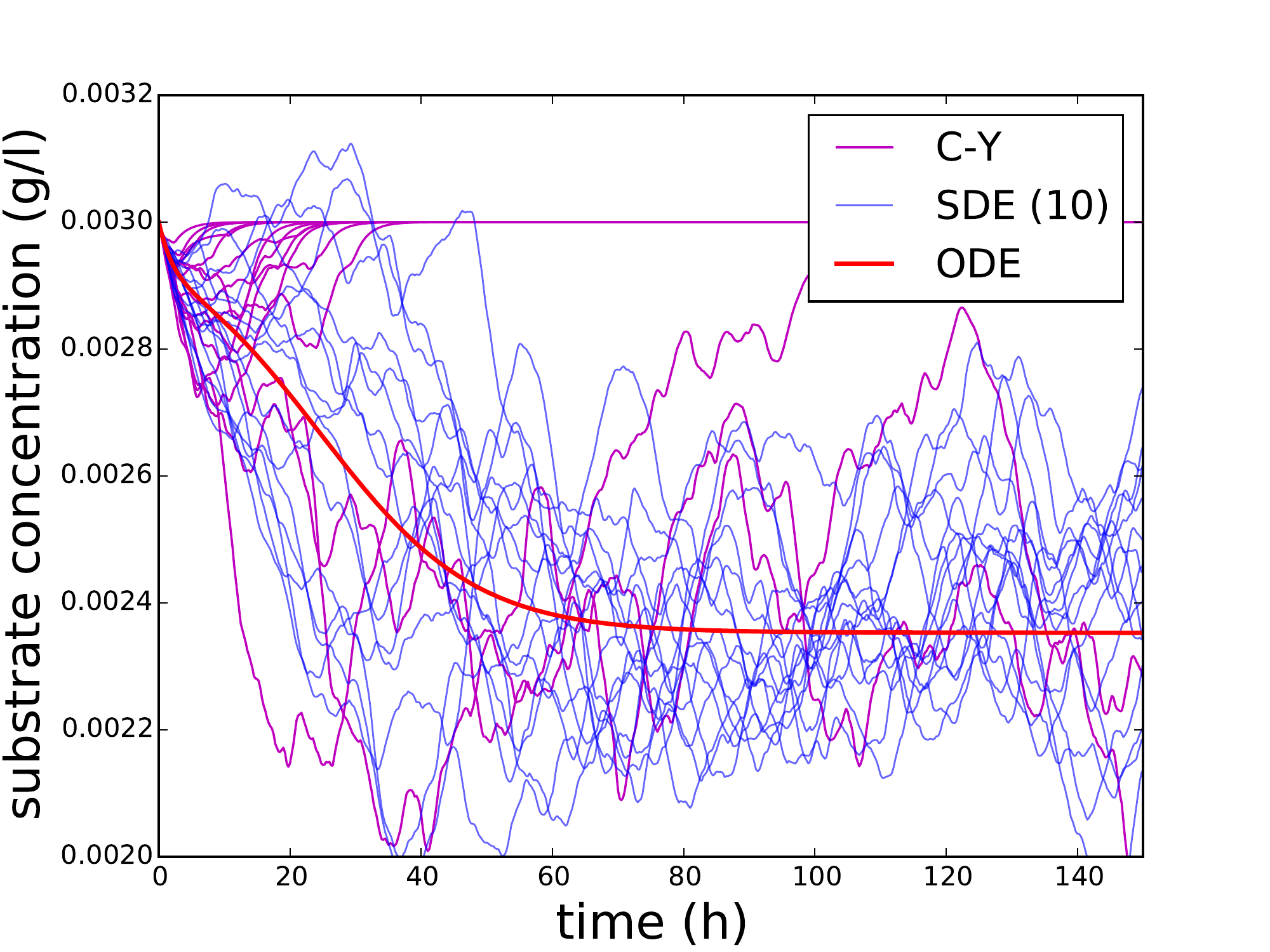}
&
\includegraphics[width=5.5cm]{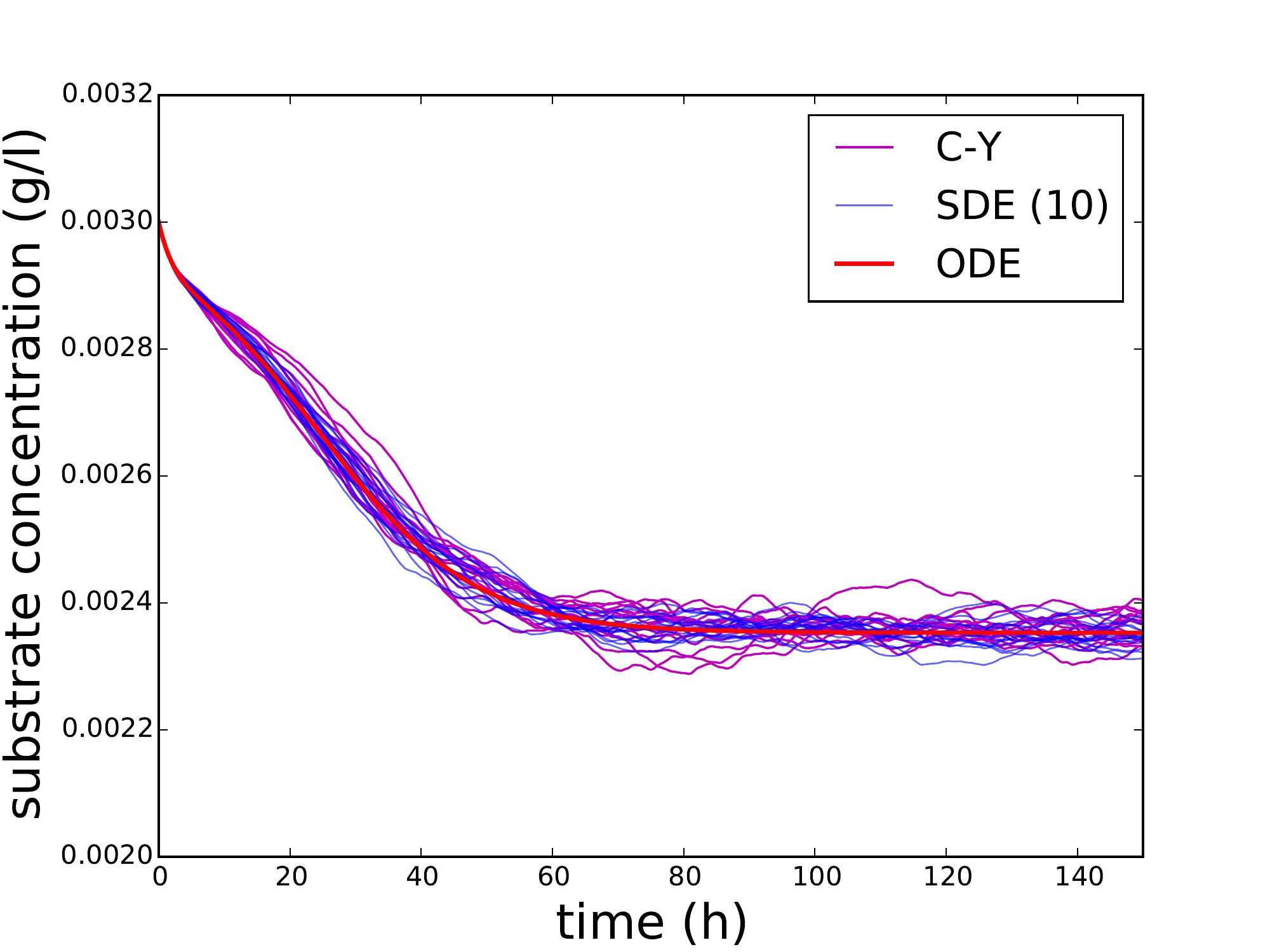}
&
\includegraphics[width=5.5cm]{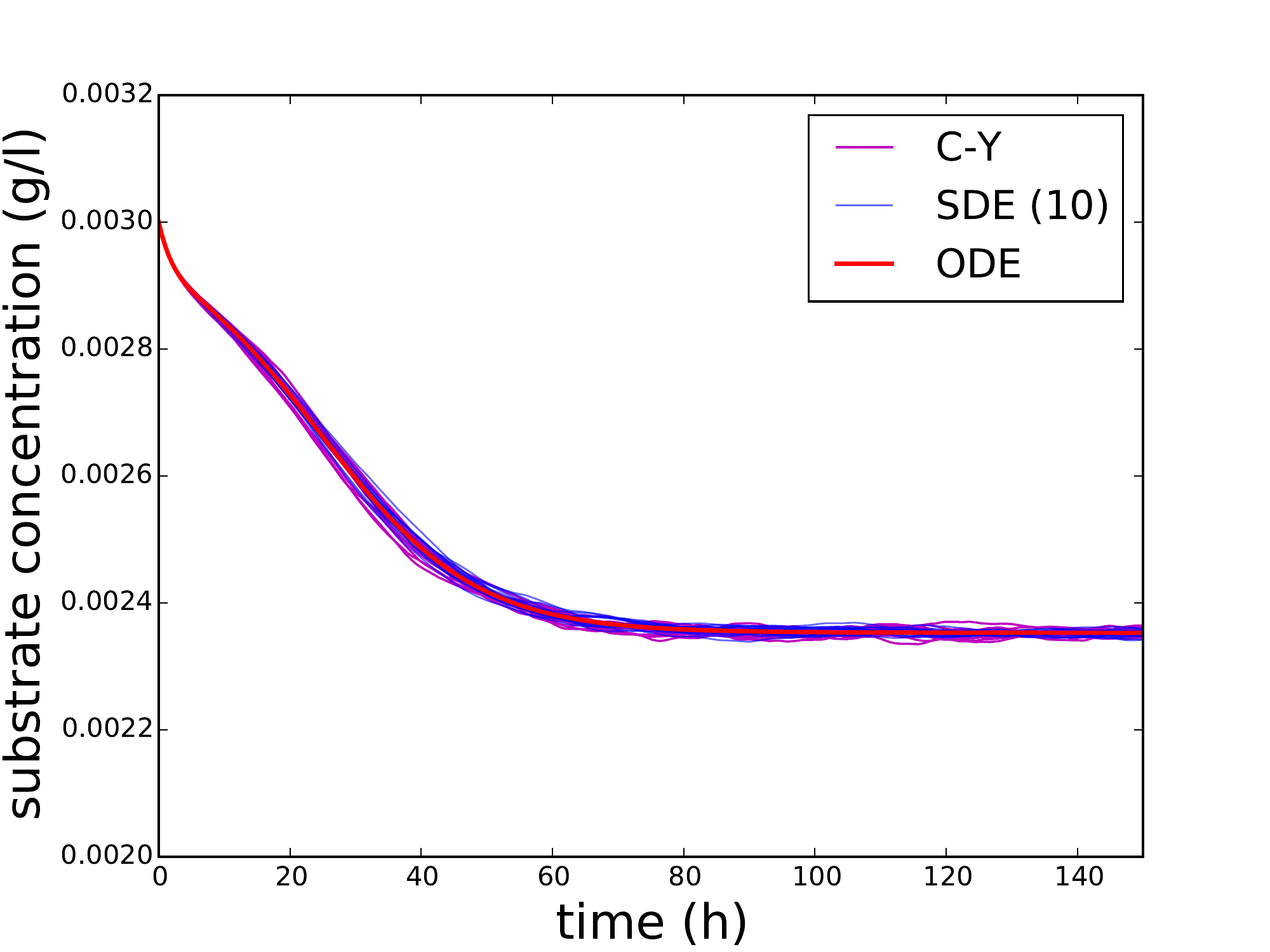}
\\ 
\scriptsize small population size
&
\scriptsize medium population size
&
\scriptsize large population size
\\
\scriptsize $V=10^{-8}$ l , $N_0 = 5$
&
\scriptsize $V=10^{-6}$ l , $N_0 = 500$
&
\scriptsize $V=10^{-5}$ l , $N_0 = 5000$
\end{tabular}
\end{center}
\vskip-1em
\caption{Time evolution of the number of individuals (top) and the substrate concentration (bottom) for small, medium and large population sizes for the Monod growth model. The magenta lines are simulations of 15 independent runs of the Crump-Young model, the blue lines are simulations of 15 independent runs of the system of stochastic differential equations \eqref{SDE} and the large red line is the simulation of the system of ordinary differential equations given by the two first equations of \eqref{eq:CYEDS}.
\label{fig.CV.CY}}
\end{figure*}

\begin{table}
\caption{Simulation times, in seconds, of the simulations of Figure \ref{fig.CV.CY} (15 runs of the Crump-young model and the SDE in small, medium and large population sizes).
Simulations were carried out on a laptop computer with 2.5 GHz i5 (2 cores) processor and 4 GB memory.}
\label{table.CPU.time}
\begin{center}
\begin{tabular}{|c|c|c|c|c|}
	\hline
    Population size & Small & Medium & Large  \\
     \hline
    15 Crump-young	runs		&	3.197	& 578.235	& 4797.546 \\
    15 runs of the SDE		&	4.631	& 4.573		& 4.272 \\	
    \hline
\end{tabular}
\end{center}
\end{table}

\medskip

The convergence, in large population size, of the Crump-Young model towards the SDE \eqref{SDE} is illustrated in Figure \ref{fig.CV.CY}. 
In small population size, the behavior of the Crump-Young model is different from the one of the SDE. In particular, contrary to the Crump-Young model, the SDE can not depict the population extinction. Moreover, in small population size, we observe that the number of individuals can be negative for the SDE, therefore this model is not satisfactory in this situation. Also note that the Crump-Young model is a jump model, whereas the SDE is a continuous model. 
However, in large population size, the jumps of the number of individuals ($\pm 1$) in the Crump-Young model become negligible with respect to the population size, then this model can be approximated by a continuous one. According to Figure \ref{fig.CV.CY}, the SDE seems to be a good approximation of the Crump-Young model from medium population size. 
Moreover it is much faster to compute than the Crump-Young model (see Table \ref{table.CPU.time}). In very large population, both models converge to the deterministic system of ODE, given by the two first equations of \eqref{eq:CYEDS}, then the ODE model is sufficient to describe the behavior of the chemostat in this context.

\begin{figure*}
\begin{center}
\begin{tabular}{c@{\hskip2em}c}
\includegraphics[width=7cm]{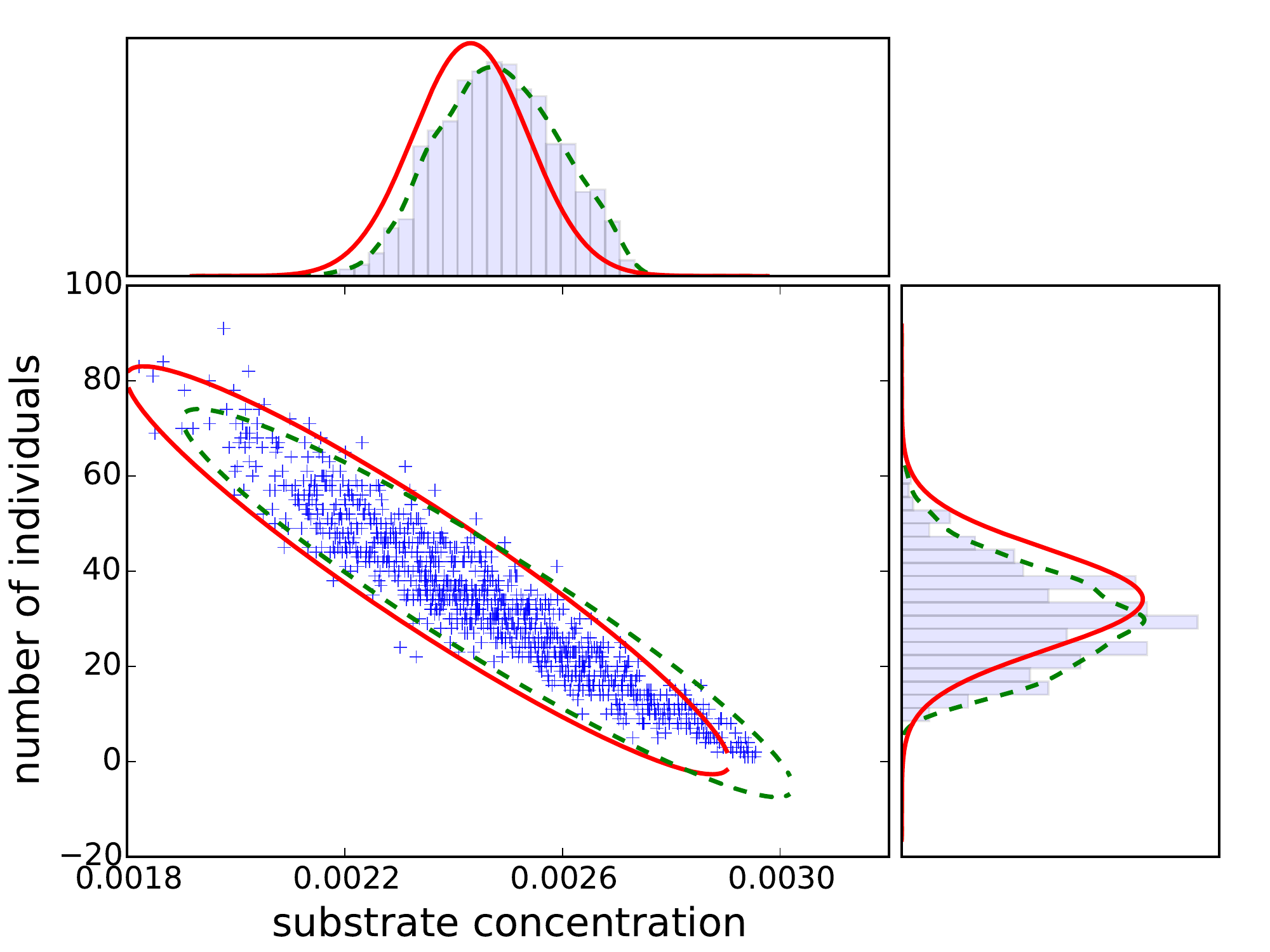}
&
\includegraphics[width=7cm]{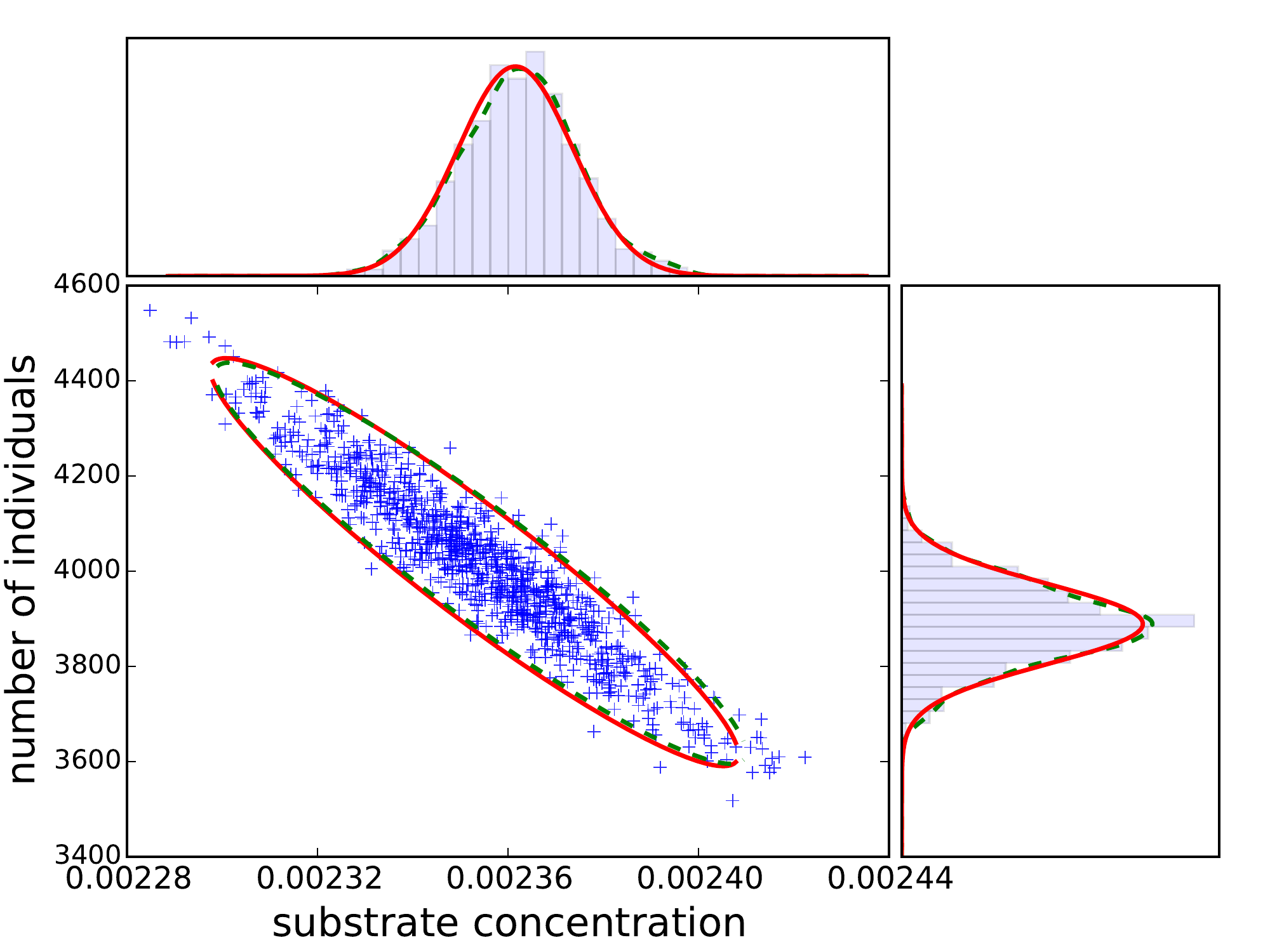}
\\ 
\scriptsize small population size : $V=10^{-8}$ l , $N_0 = 5$
&
\scriptsize medium population size : $V=10^{-6}$ l , $N_0 = 500$
\\
\scriptsize sample correlation : -0.947344
&
\scriptsize sample correlation : -0.942670
\end{tabular}
\includegraphics[width=7cm]{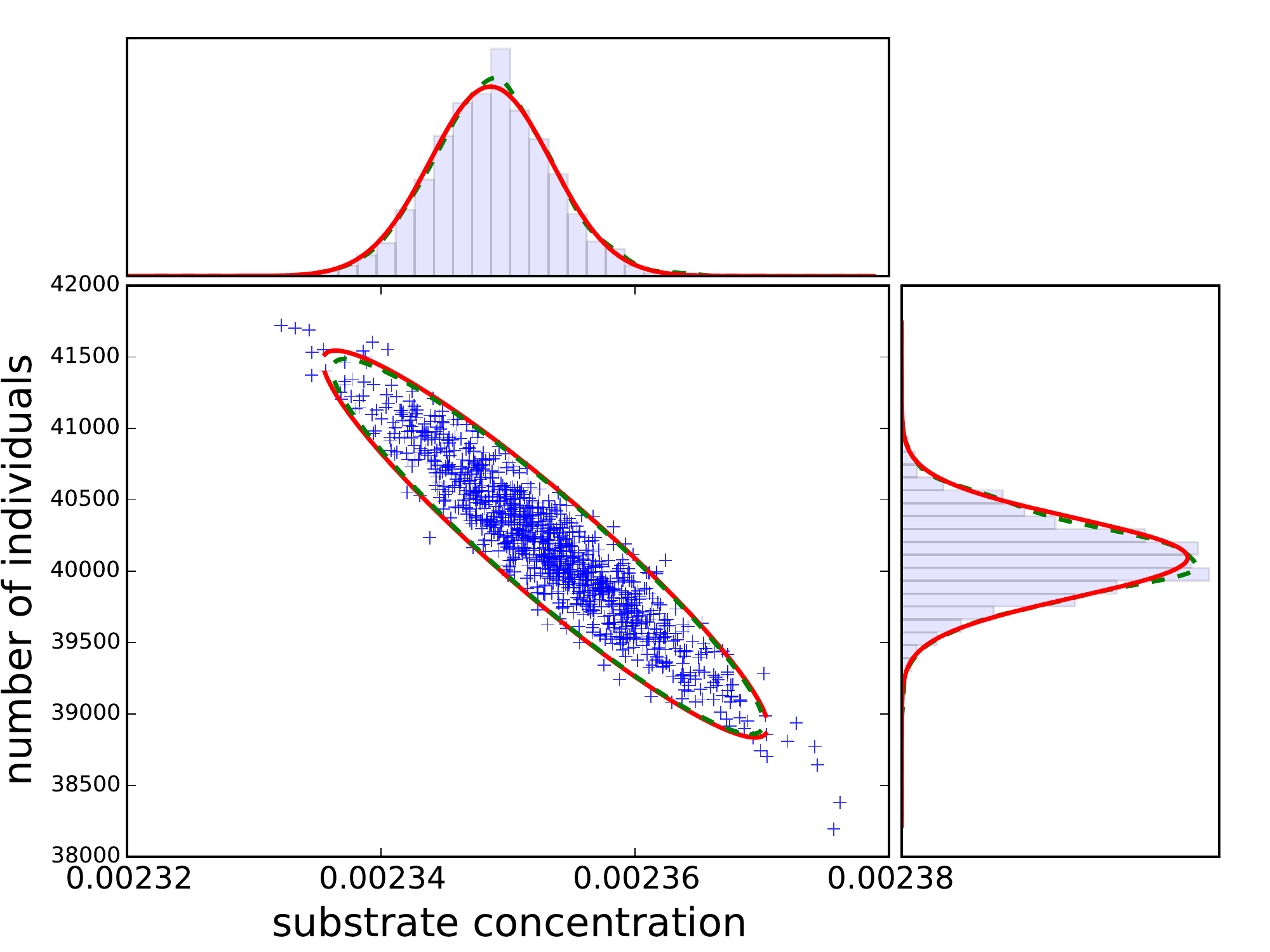}\\
\scriptsize large population size : $V=10^{-5}$ l , $N_0 = 5000$\\
\scriptsize \scriptsize sample correlation : -0.940845
\end{center}
\vskip-1em
\caption{\label{fig.QSD}
Distribution of the number of individuals and the substrate concentration, at time $T=1000$ h, for small (top left), medium (top right) and large (bottom) population sizes for the Monod growth model.
For each population size, the blue crosses represent the states of 1000 independent runs of the Crump-Young model. 
The green dashed ellipse is the 95\% confidence ellipse of a bidimensional normal variable where the mean and the covariance matrix are estimated on the 1000 Crump-Young model simulations (the sample correlation between the substrate concentration and the number of individuals is indicated under each graph). 
The red ellipse is the 95\% confidence ellipse of a normal variable with law \eqref{invariant.mesure2} (the theoretical correlation equals $-\frac{\mu(S^*)}{\mu'(S^*)\,\sqrt{\alpha\,\beta}}=-0.942470$).
On the top (resp. right) of each graph, the blue histogram represents the empirical distribution of the number of individuals (resp. the substrate concentration) of the Crump-Young model, the dashed green line is this distribution regularized with a Gaussian kernel and the red curve represents the probability density function of the Gaussian law $\mathcal{N}(N^*,\alpha)$ (resp. $\mathcal{N}(S^*, \beta)$), with $\alpha$ defined by \eqref{def.alpha} (resp. $\beta$ defined by \eqref{def.beta}), where $(N^*, S^*)$ is the non-trivial ($\neq (0,\Sin)$) equilibrium of the ODE system (see \eqref{invariant.mesure2}).
}
\end{figure*}

Figure \ref{fig.QSD}\footnote{The simulations of the 1000 runs in large population size was made on the \textit{babycluster} of the \textit{Institut \'Elie Cartan de Lorraine} : \url{http://babycluster.iecl.univ-lorraine.fr/}} compares the estimated quasi-stationary distribution of the Crump-Young model to the invariant mesure of the SDE given in Theorem \ref{th:intro-CY} for the three population sizes of Figure \ref{fig.CV.CY}. 
In small population size, we observe that the two laws are different. The main reason is the large probability of extinction of the Crump-Young model. Indeed, on the 1000 non-extinct populations, many are close to the extinction $(0,\Sin)$, whereas the invariant measure predicts a convergence in a neighbourhood of the non-trivial stationary state $(N^*,S^*)\neq (0,\Sin)$.
However, in medium and large population sizes, the invariant mesure \eqref{invariant.mesure2} is a very good approximation of the quasi-stationary distribution of the Crump-Young model.

\subsubsection{Haldane growth}
We now use the following Haldane growth function:
$$
	\mu(S)=1.35\,\frac{S}{0.004+S+S^2/0.04}\,,
$$
and $k = 0.23$, $m = 7\,. 10^{-13}\, g$, $D=0.5$ h$^{-1}$, 
$\Sin=0.0978$ g.l$^{-1}$.

The behavior of the chemostat, for Haldane growth, depends on the initial condition. Indeed, there is, for the ODE, two basins of attraction which are associated to the two stable equilibria $(0,\Sin)$ and $(N^*,S^*)$ (see Figure \ref{fig.haldane}), contrary to Monod growth for which there is only one stable equilibrium (the washout is an unstable equilibrium).

If the initial condition is close to the boundary of the two basins of attraction, the ODE remains in its initial basin and converges to its attractor whereas, due to the randomness, the Crump-Young model can change basin of attraction. 
The SDE \eqref{SDE} is very depending on the ODE solution and will converge to the invariant mesure of the basin of attraction associated to the initial condition. Therefore, the SDE \eqref{SDE} is not representative of the two possible convergences for one given initial condition.
The SDE \eqref{SDE} is in fact a good approximation of the Crump-Young model when the population size is sufficiently large (which depends on the distance between the initial condition and the boundary of the two basins of attraction) to ensure that the Crump-Young model does not change (with a large probability) basin of attraction (see Figure \ref{fig.haldane.large.pop}).
Even if the approximation only holds for large population, in Figure \ref{fig.haldane.large.pop} (right), both models converge to the population extinction (even if the SDE is not absorbed, it converges to $0$).

However, if the object of interest is the convergence towards $(0,\Sin)$ or $(N^*,S^*)$ for one given initial condition (close to the boundary of basins of attraction) then we must either use the Crump-Young model (if the simulation time is reasonable) or use a model 
which keeps more qualitative properties than the SDE \eqref{SDE}.

\begin{figure*}
\begin{center}
\begin{tabular}{cc}
\includegraphics[width=6cm]{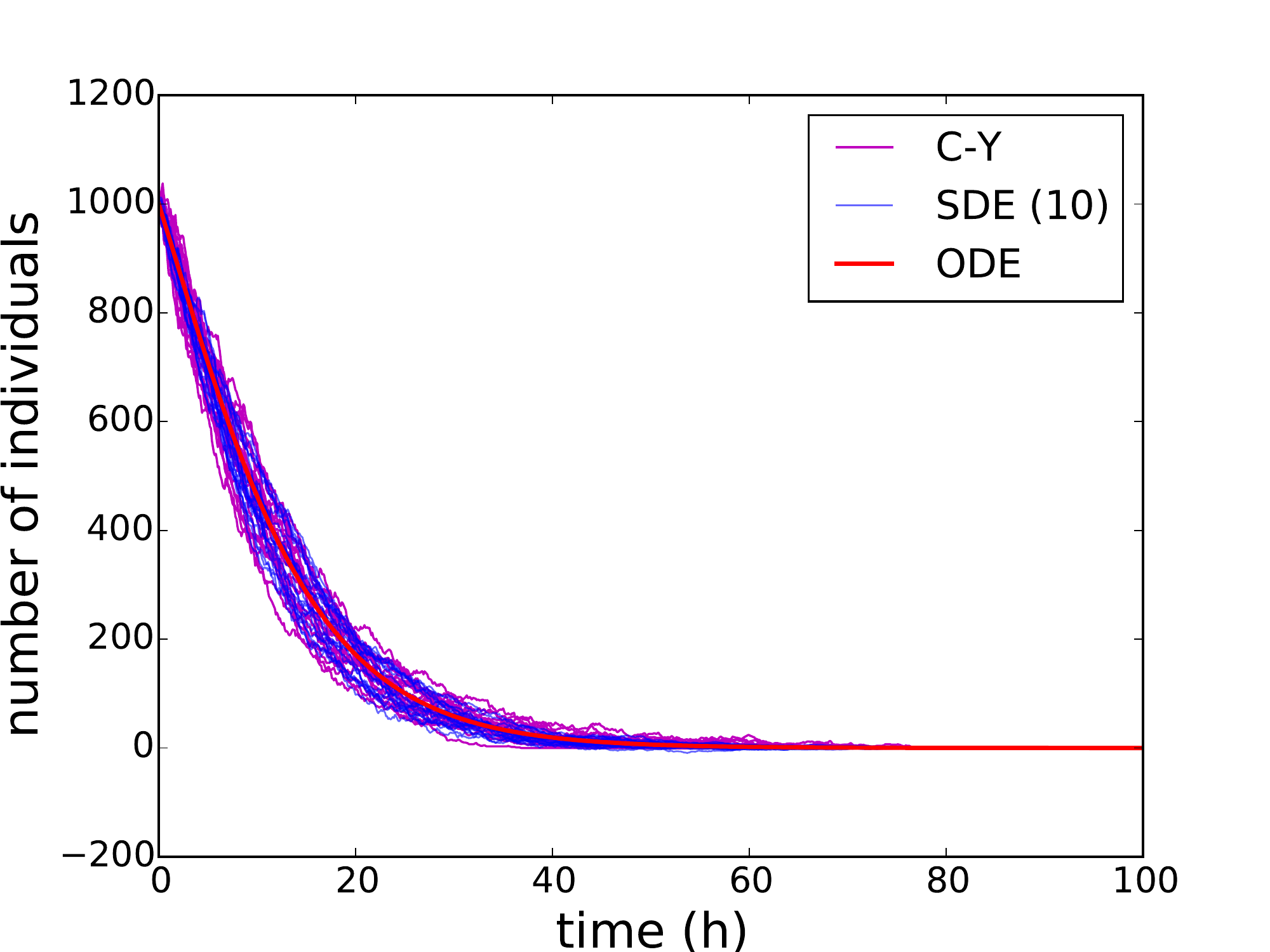}
&
\includegraphics[width=6cm]{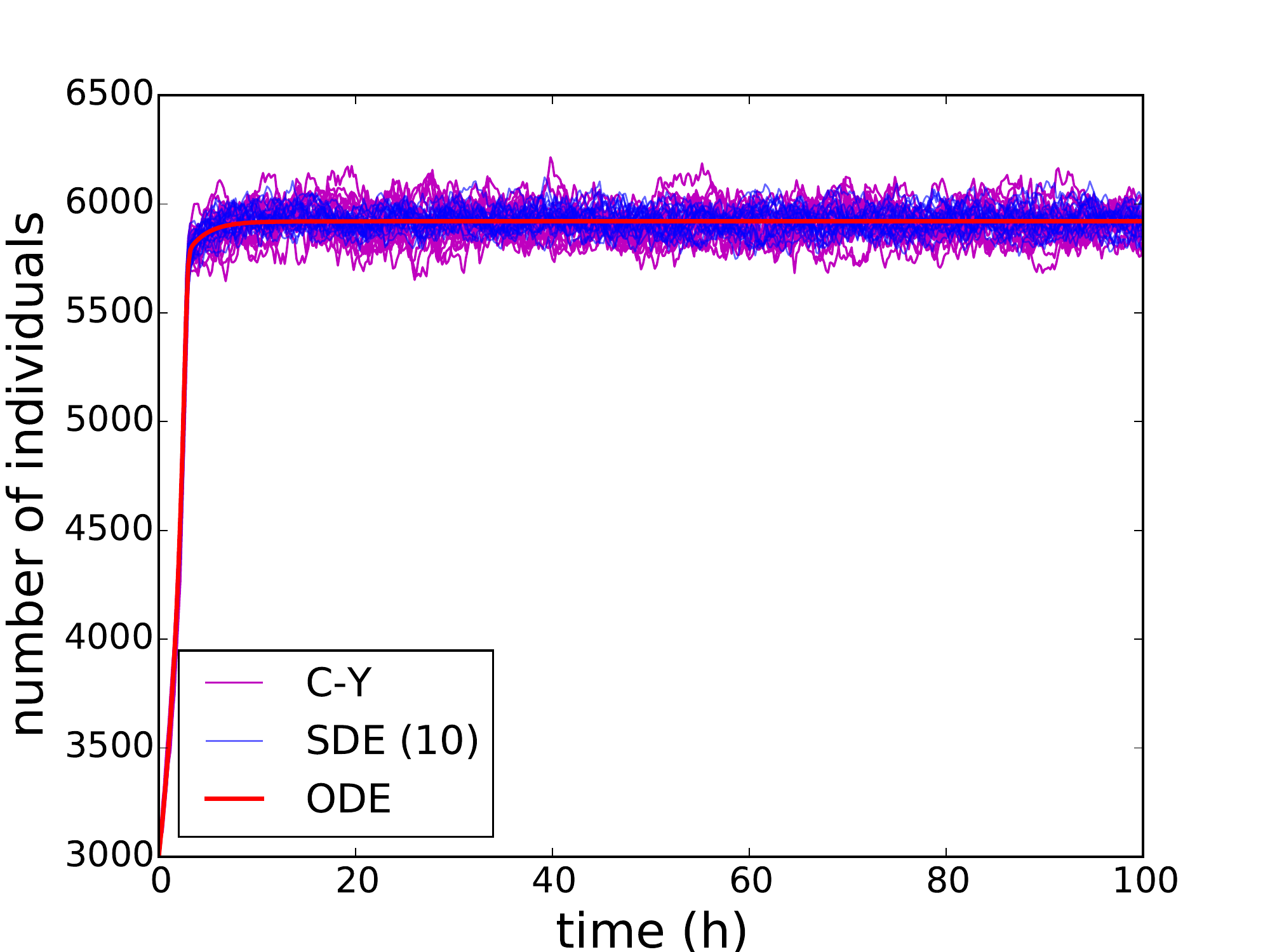}
\\ 
\scriptsize $N_0 = 1000$, $S_0=0.08$ g.l$^{-1}$
&
\scriptsize $N_0 = 3000$, $S_0=0.04$ g.l$^{-1}$
\\ 
\scriptsize $V=10^{-8}$ l
&
\scriptsize $V=10^{-8}$ l
\end{tabular}
\end{center}
\vskip-1em
\caption{Time evolution of the number of individuals for the Haldane growth model for 20 independent runs of the Crump-Young model (magenta lines), 20 independent runs of the SDE \eqref{SDE} (blue lines) and the ODE (red curve) for initial conditions $N_0 = 1000$, $S_0=0.08$ g.l$^{-1}$ (left) and $N_0 = 3000$, $S_0= 0.04$ g.l$^{-1}$ (right).
\label{fig.haldane.large.pop}
}
\end{figure*}

In fact, Theorem \ref{th:tcl-CY} suggests that, for $n$ sufficiently large, the Crump-Young model can be approximated by
$
(N_t^n,\, S^n_t) \approx (\widetilde N_t^n ,\, \widetilde S_t^n)
$ with
\begin{align*}
\widetilde N_t^n := n\, N_t + \sqrt{n}\, \left[Q_t + F^n_t\right], \quad 
\widetilde S_t^n := S_t + \frac{1}{\sqrt{n}}\, \left[R_t + H^n_t\right]\,,
\end{align*}
where $(F^n)_n$ and $(H^n)_n$ are two sequences of processes which converge in distribution towards the process 0 in  $\DD([0,T],\RR)$. The SDE \eqref{SDE} is obtained by letting $F^n=H^n=0$. Let now consider $(F^n)_n$ and $(H^n)_n$ be defined by
\begin{align*}
\dif F_t^n 
	& = 
		\left[\sqrt{n}\,\left(\mu(S_t^n)-\mu(S_t)\right)\,\frac{N_t^n}{n}
						-\mu'(S_t)\,R_t\,N_t \right]\,\dif t
\\
	& \quad
				+ \left[\sqrt{(\mu(S_t^n)+D)\,\frac{N_t^n}{n}}
						-\sqrt{(\mu(S_t)+D)\,N_t}\right]\,\dif B_t
\end{align*}
and
$$
	\dif H_t^n = \frac{k}{V}\,m\,\left[
		\mu(S_t)\,(Q_t-Q_t^n)+R_t\,\mu'(S_t)\,N_t-\sqrt{n}\,(\mu(S_t^n)-\mu(S_t))\frac{N_t^n}{n}
	\right]\,\dif t\,,
$$
with initial condition $F_0^n=H_0^n=0$.
Following, for example, the approach used in the proof of Lemma \ref{prop.identification.Rt}, we can prove that $(F^n)_n$ and $(H^n)_n$ converge towards 0 in distribution. We then (heuristically) obtain the following model of approximation : 
\begin{equation} 
\label{eq:EDS_new}
     \begin{cases}
\dif \widetilde N^n_t &= (\mu(\widetilde S^n_t)-D)\,\widetilde N^n_t\,\dif t 
					+ \sqrt{(\mu(\widetilde S^n_t)+D)\,\widetilde N^n_t}\,\dif B_t\,, \\
\dif \widetilde S^n_t &= \left[D\,(\Sin-\widetilde S^n_t)
					-\frac{k}{V\,n}\,m\,\mu(\widetilde S^n_t)\, \widetilde N^n_t \right] \dif t\,.
     \end{cases}
\end{equation}

This new approximation model can be seen as a particular case of the model of \cite{campillo2011chemostat}; see \cite[Equations (17a) and (17b)]{campillo2011chemostat} with $K_1=K_4=1$ and $K_2=K_3=K_5=+\infty$ (note that $K_2=K_3=K_5=+\infty$ corresponds to a continuous approximation of the substrate equation for a large number of substrate particles which is an approximation that we made for all models in this article).

In contrast with \eqref{SDE}, the variance of the population size $\widetilde{N}^n_t$  depends on itself. Moreover this type of dependence  is classic in population dynamics, see for instance \cite{BM15}.

\begin{figure*}
\begin{center}
\begin{tabular}{c@{\hskip-0.8em}c@{\hskip-0.8em}c}
\includegraphics[width=5.5cm]{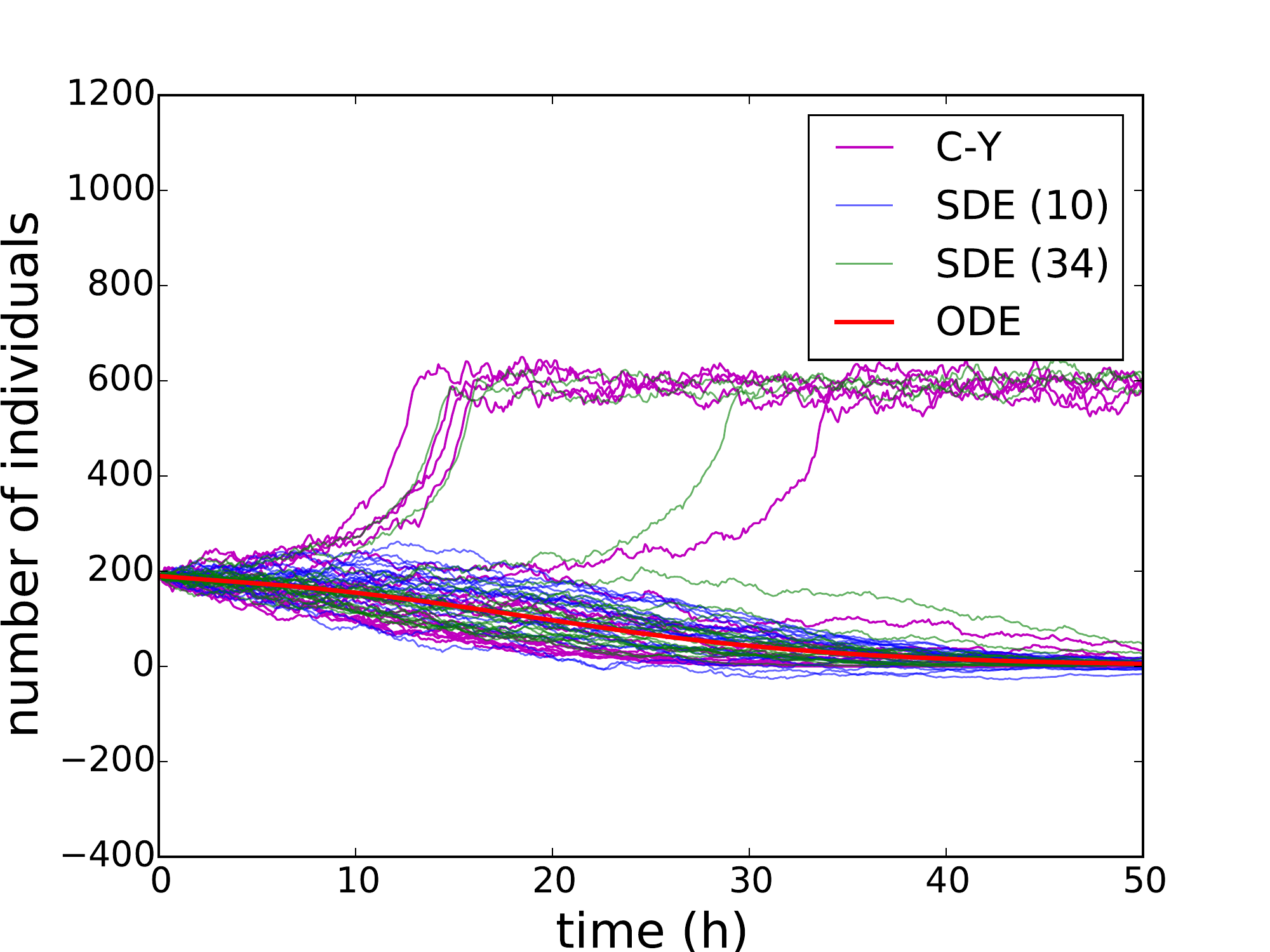}
&
\includegraphics[width=5.5cm]{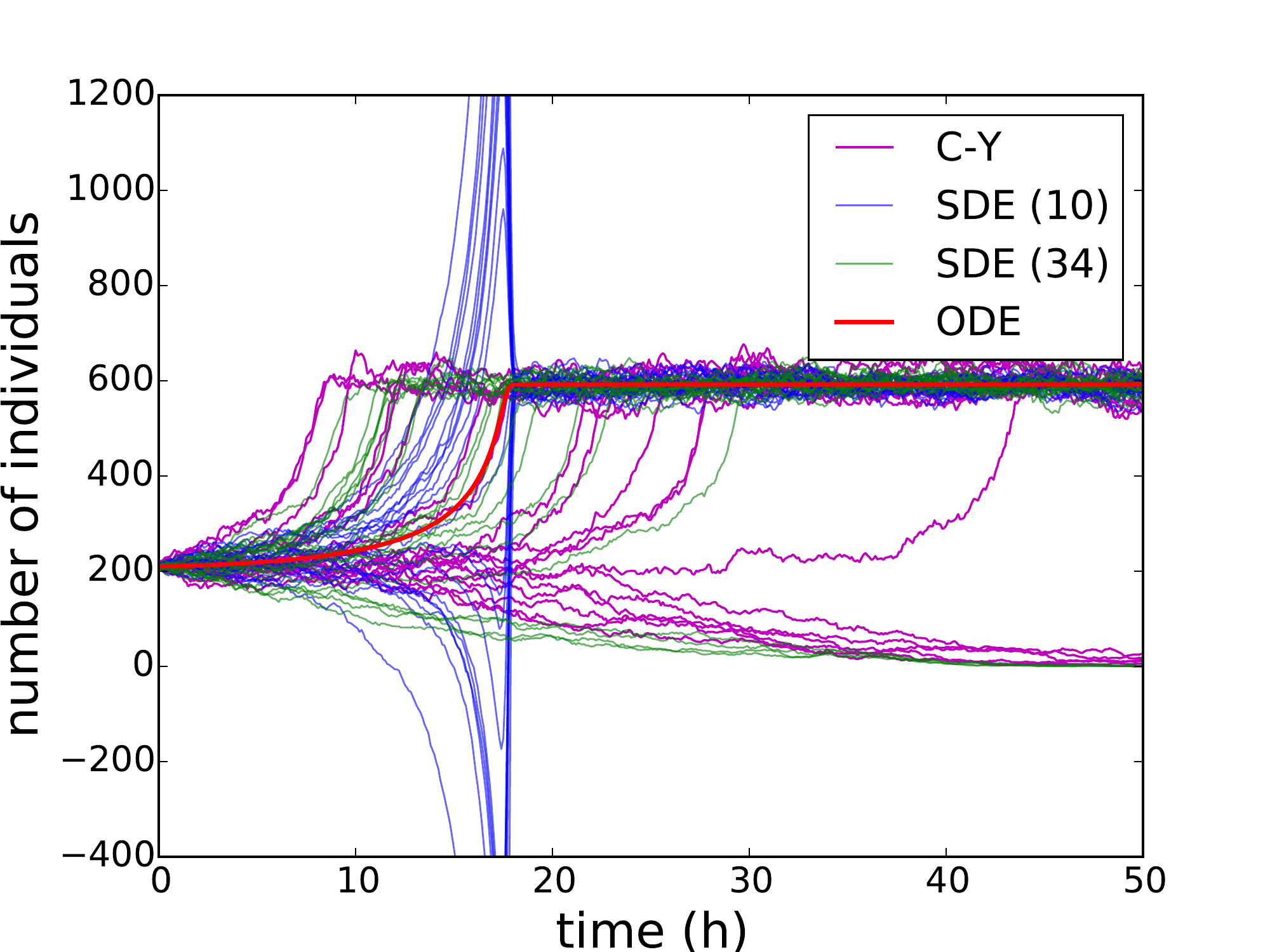}
&
\includegraphics[width=5.5cm]{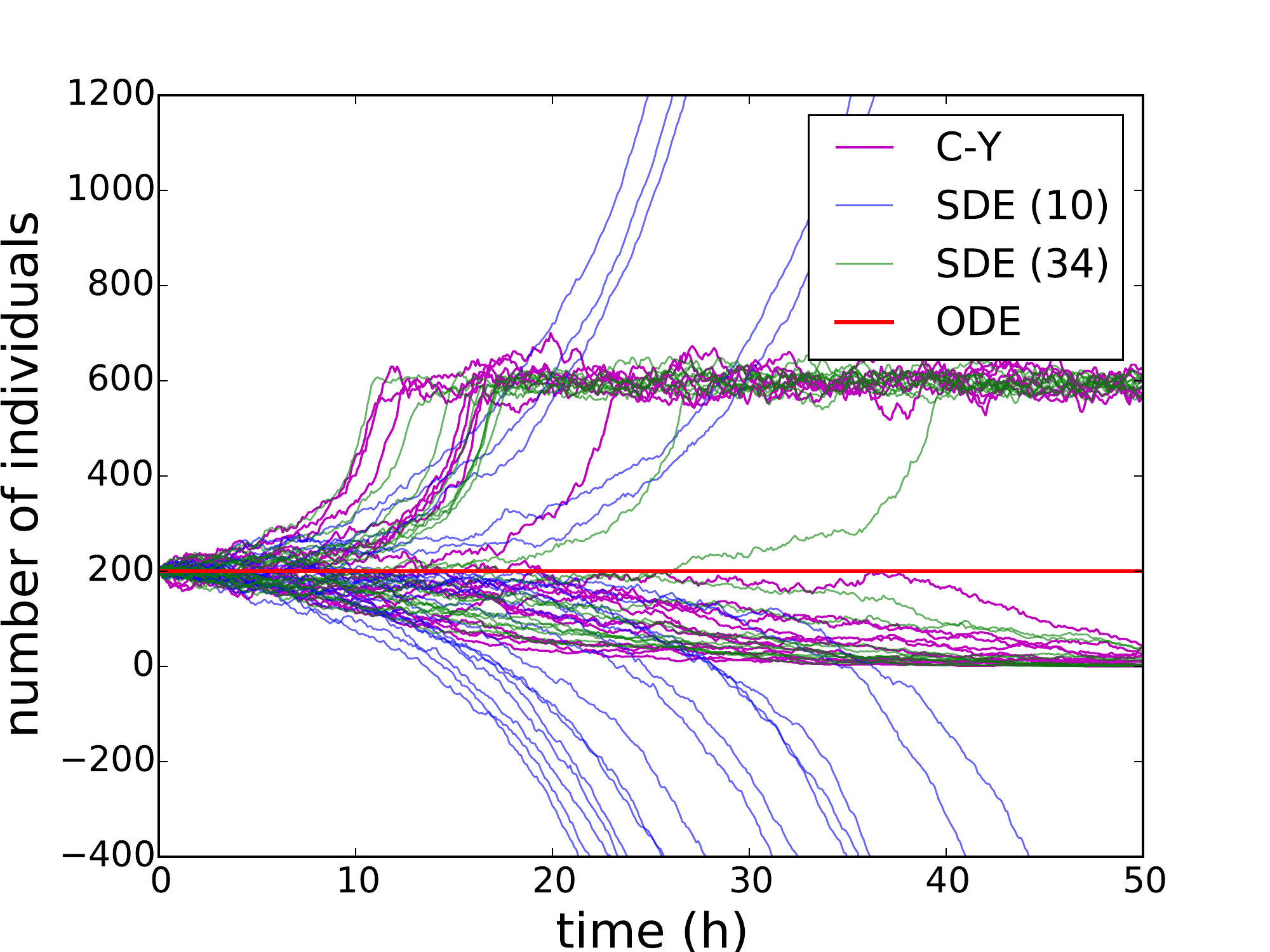}
\\ 
\includegraphics[width=5.5cm]{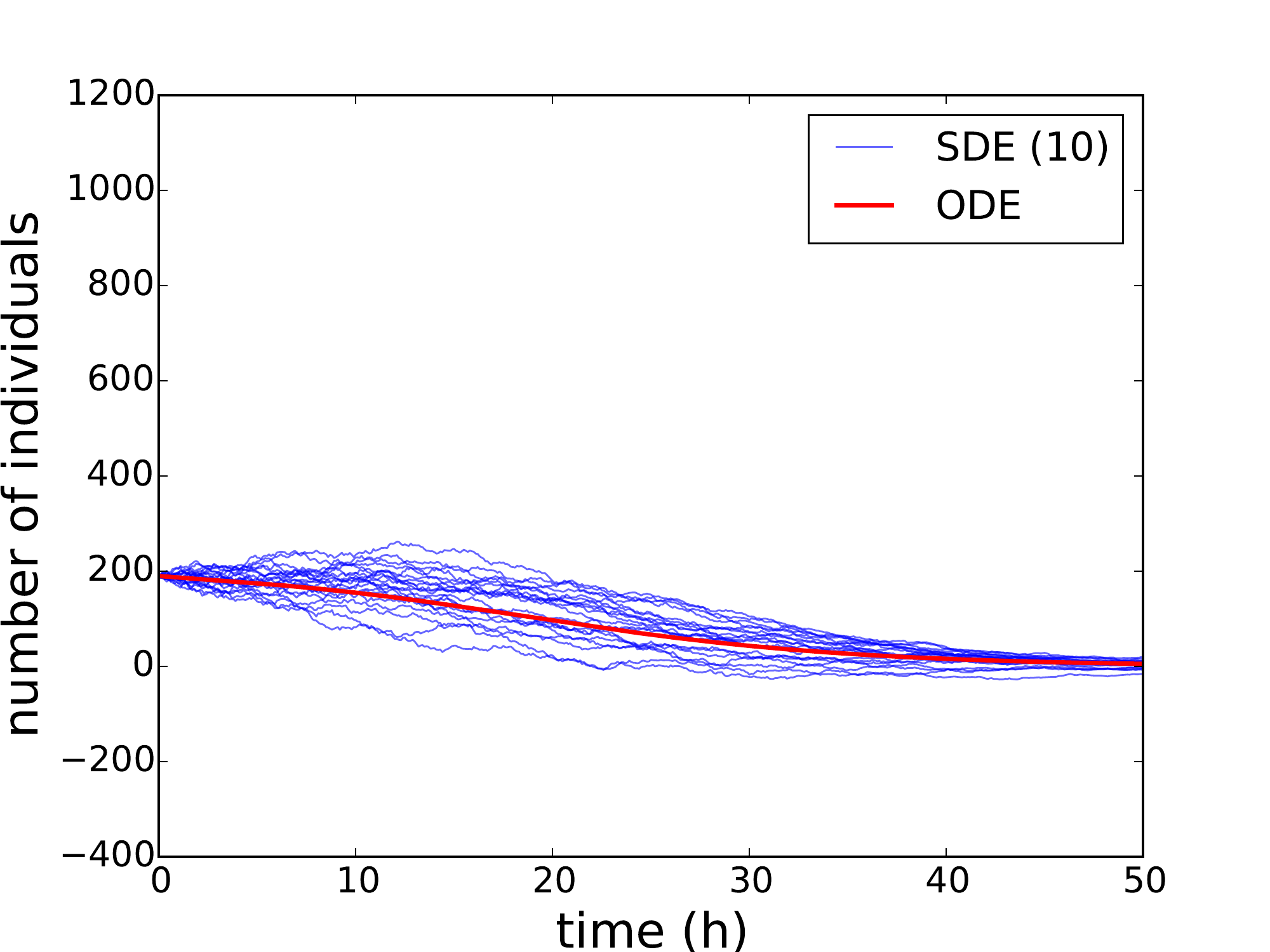}
&
\includegraphics[width=5.5cm]{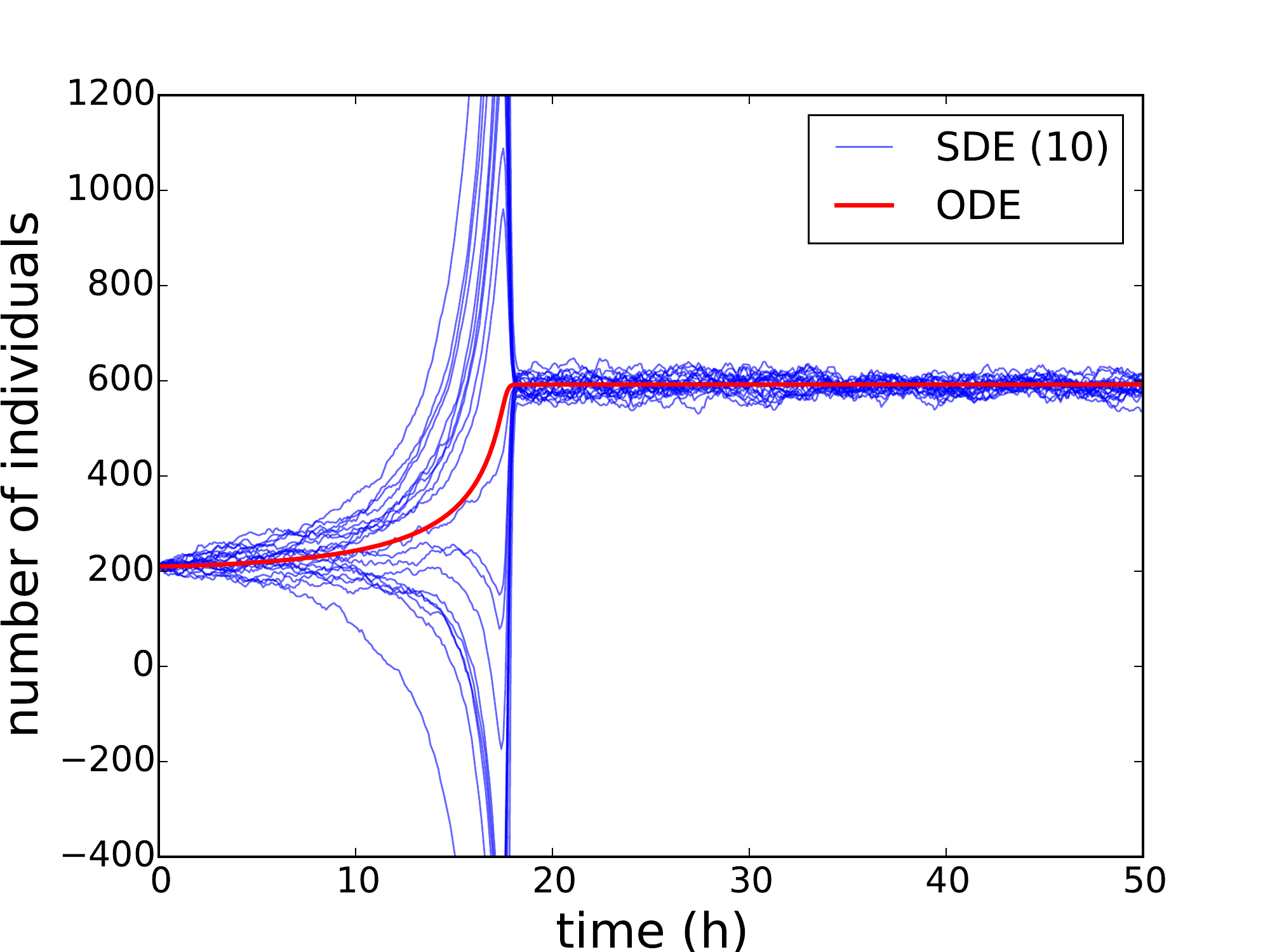}
&
\includegraphics[width=5.5cm]{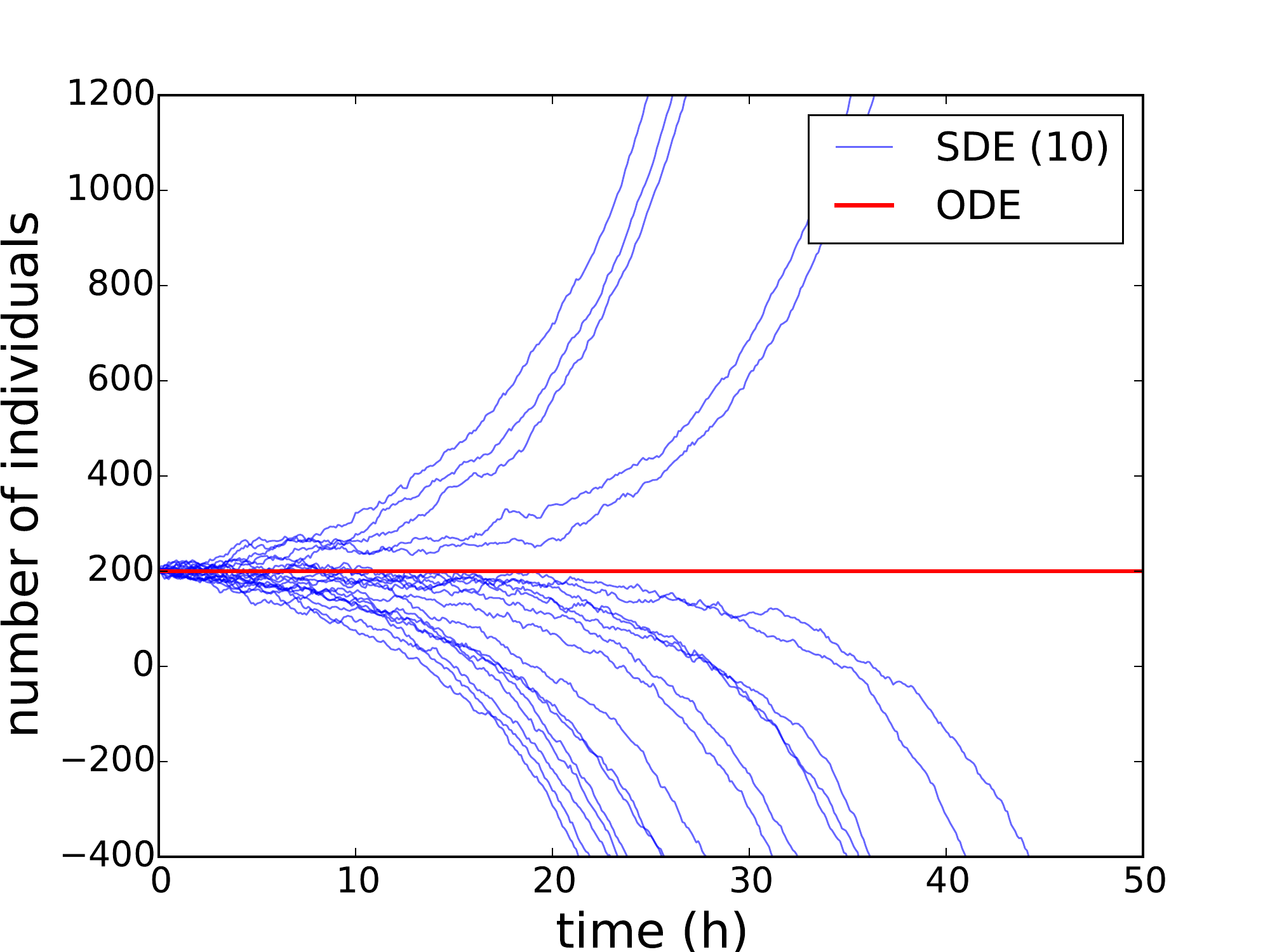}
\\ 
\includegraphics[width=5.5cm]{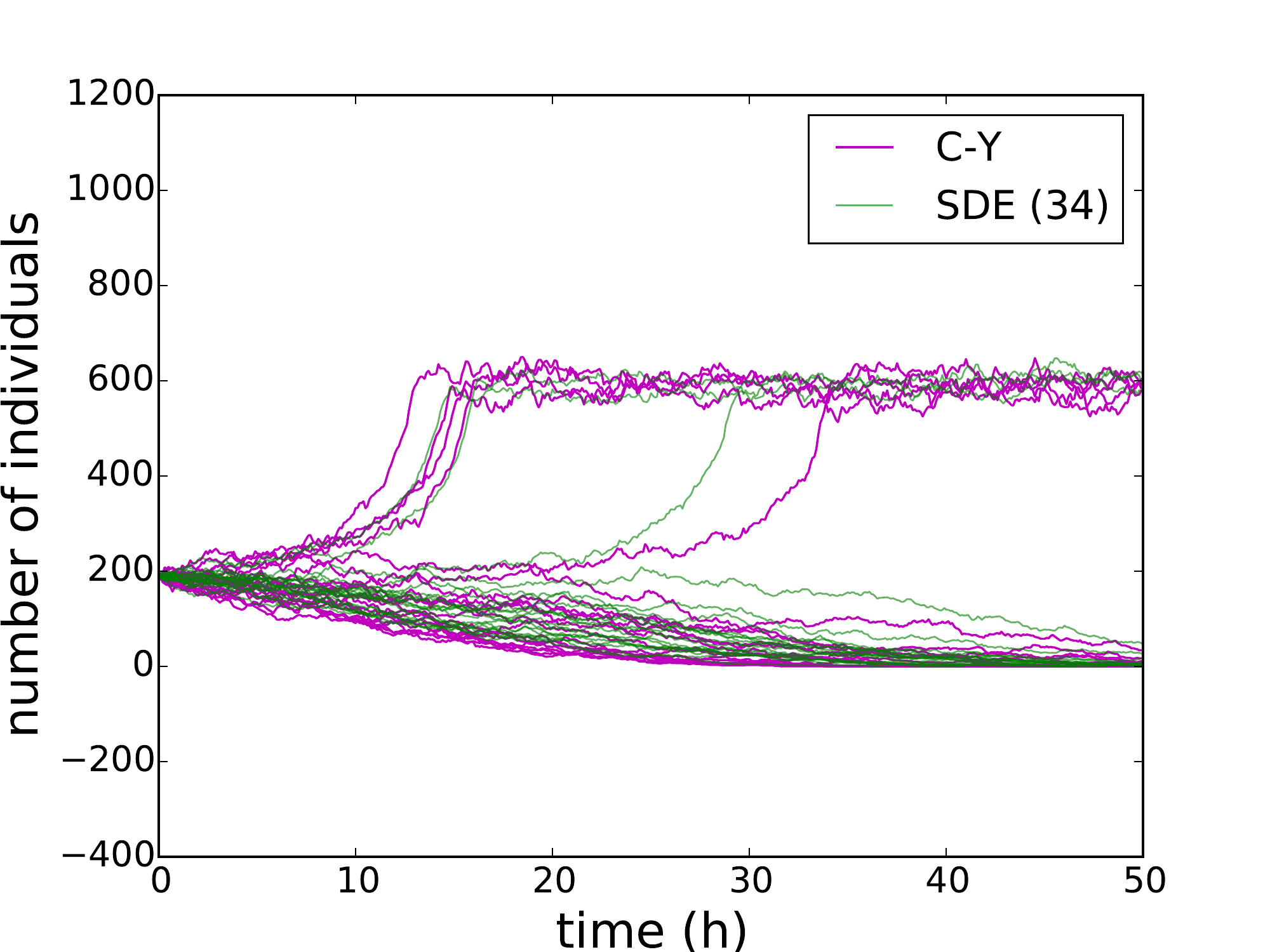}
&
\includegraphics[width=5.5cm]{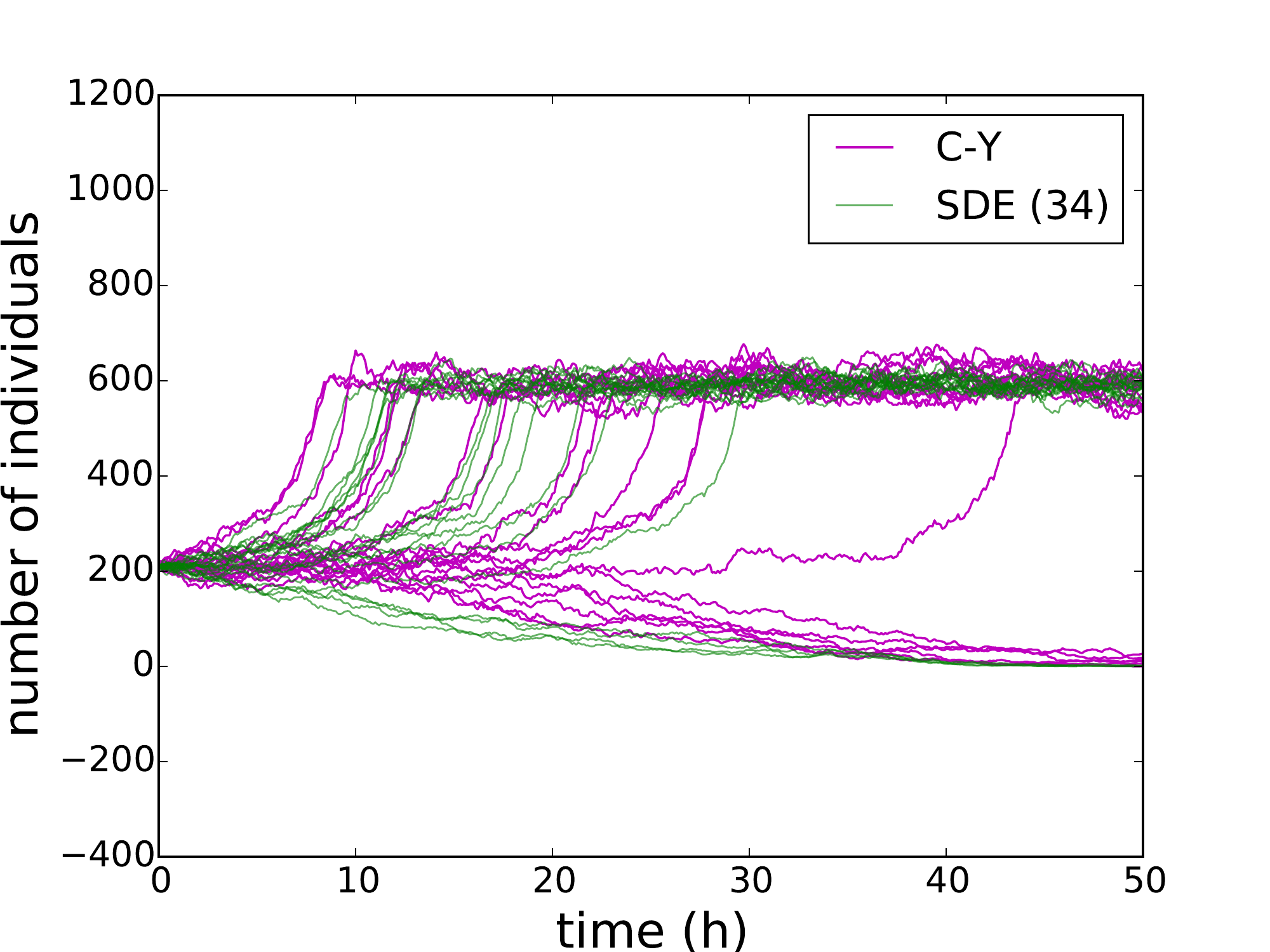}
&
\includegraphics[width=5.5cm]{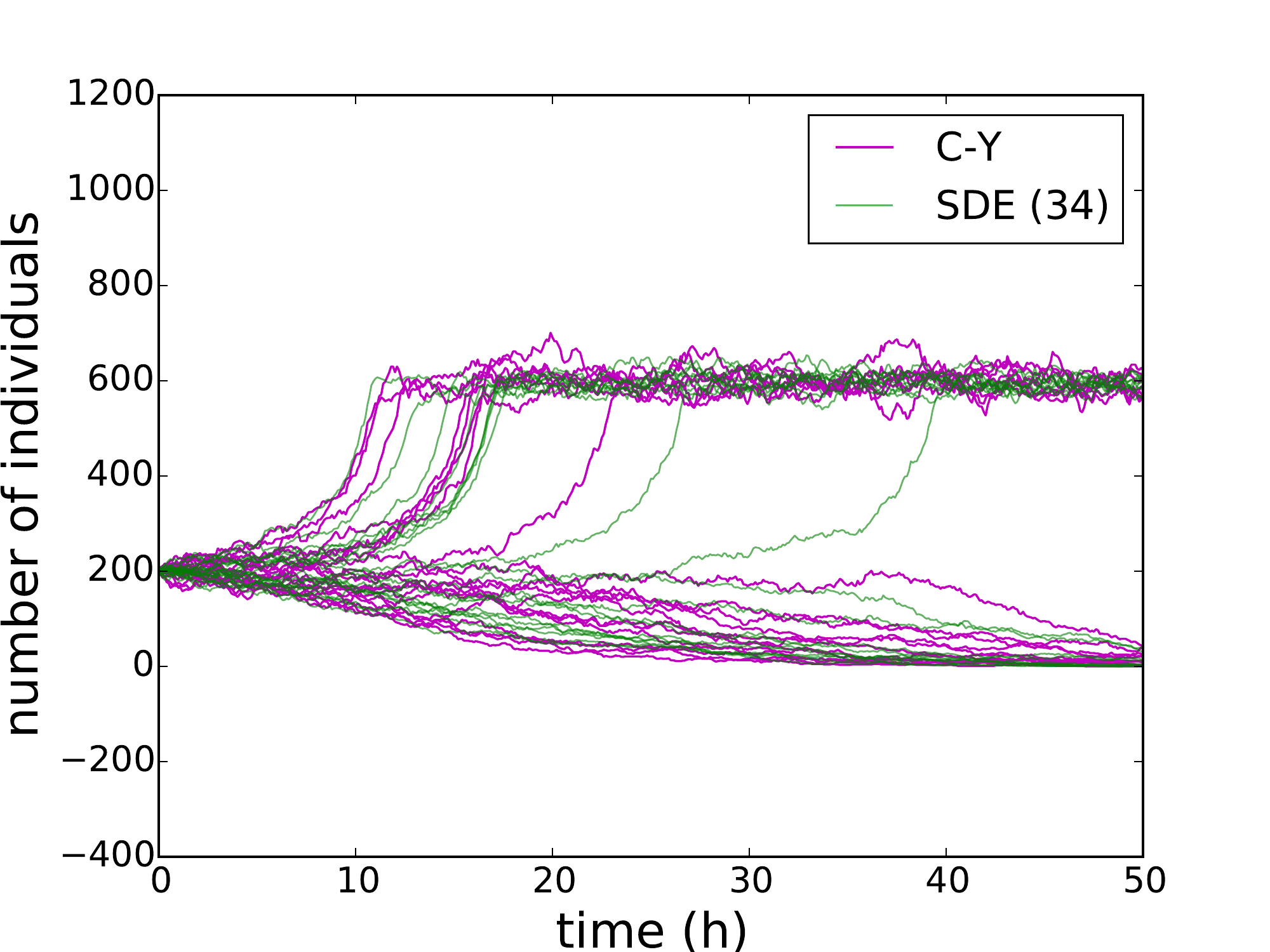}
\\ 
\scriptsize $N_0 = 190$, $S_0=0.07$ g.l$^{-1}$
&
\scriptsize $N_0 = 210$, $S_0=0.065$ g.l$^{-1}$
&
\scriptsize $N_0 = N^{ue} = 200$, $S_0= S^{ue} = 0.0656$ g.l$^{-1}$
\\ 
\scriptsize $V=10^{-9}$ l
&
\scriptsize $V=10^{-9}$ l
&
\scriptsize $V=10^{-9}$ l
\end{tabular}
\end{center}
\vskip-1em
\caption{Time evolution of the number of individuals for the Haldane growth model for 20 independent runs of the Crump-Young model (magenta lines), 20 independent runs of the SDE \eqref{eq:EDS_new} (green lines), 20 independent runs of the SDE \eqref{SDE} (blue lines) and the ODE (red curve) for initial conditions $N_0 = 190$, $S_0=0.07$ g.l$^{-1}$ (left), $N_0 = 210$, $S_0=0.065$ g.l$^{-1}$ (middle) and $N_0 = N^{ue}$, $S_0= S^{ue}$ g.l$^{-1}$ (right).
\label{fig.new.SDE}
}
\end{figure*}

\medskip

Figure \ref{fig.new.SDE} represents the time evolution of the number of individuals for the four models (ODE, Crump-young models, SDE \eqref{SDE} and SDE \eqref{eq:EDS_new}) in three cases. Each column represents the same simulation with all or some represented curves (the first line allows to compare the four models together, however we have split each graph in two graphs for the sake of clarity). The first one (on the left) is for initial condition close to the unstable equilibrium $(N^{ue},S^{ue})$ for which the solution of the ODE converges towards the washout $(0,\Sin)$. Therefore the solutions of the SDE \eqref{SDE} also converge towards the washout. However the Crump-Young model changes basin of attraction with a large probability and converges either to the washout or to a neighbourhood of the stable equilibrium $(N^*,S^*)$. As we can observe, the solutions of the SDE \eqref{eq:EDS_new} mimic the behavior of the Crump-Young model and then SDE \eqref{eq:EDS_new} seems better than the SDE \eqref{SDE} in this context. 

The second case (center) is for initial condition close to the unstable equilibrium $(N^{ue},S^{ue})$ for which the solution of the ODE converges towards the stable equilibrium $(N^*,S^*)$. Once again, the Crump-Young model and the diffusion process \eqref{eq:EDS_new} depict two possible convergences (towards the washout or the quasi-stationary distribution in the area of $(N^*,S^*)$) while the diffusion process \eqref{SDE} follows the solution of the ODE. We see an explosion of the noise for the blue curve. This comes from that for small time $t$, the matrix  $A_t$ has large positive eigenvalue (due to the initial condition, recall that it tends to infinity when the initial condition is the unstable equilibrium) but, as in the Monod case, $\Sigma_t$ converges to a finite matrix.

For the last case (right), the initial condition equals the unstable equilibrium $(N^{ue},S^{ue})$. Therefore, the deterministic approximation stays at this equilibrium whereas the Crump-Young model and the diffusion process \eqref{eq:EDS_new} depict over again the two possible convergences.
We observe that the solutions of \eqref{SDE} diverge. In fact, we can write the SDE as in \eqref{eq.exp.Z}, but, as $\mu'(S^{ue})<0$, the eigenvalue $\lambda_s^1=-\frac{k}{V}\,m\,\mu'(S^{ue})\,N^{ue}$ of $A_s$ is positive which implies the divergence of the SDE \eqref{SDE}.

Even if the solutions of \eqref{eq:EDS_new} seems, for some parameters, to be a more suitable approximation for the Crump-Young model, it is nevertheless more difficult to study it mathematically. Indeed, as for the Crump-Young model, there is always extinction; see Theorem \ref{lem:eds-ext}. Also, even if this process is continuous and solution to a stochastic differential equation, it is not possible to deduce a result of uniqueness (or convergence) for a quasi-stationary distribution because it is not reversible in contrast with the classic logistic diffusion process; see \cite{CMS13}. Also, in contrast with the solutions of \eqref{SDE}, no explicit formula is known for solutions of \eqref{eq:EDS_new}.

\subsection{Extinction time of the diffusion process \eqref{eq:EDS_new}}
\label{subsec.extin.newSDE}

In this section, we will consider a solution $(\widetilde{N}_t,\widetilde{S}_t)_{t \geq 0}$ of \eqref{eq:EDS_new} for one fixed $n$. The notation $\mathbb{P}_{(x,s)}$ refers to the probability given the initial condition is $(\widetilde{N}_0,\widetilde{S}_0)=(x,s)$ and $\mathbb{E}_{(x,s)}$ is the expectation associated to this probability.

\begin{theorem}[Extinction]
\label{lem:eds-ext}
Let $(\widetilde{N}_t,\widetilde{S}_t)_{t \geq 0}$ be a solution of \eqref{eq:EDS_new} and 
$$
T_0= \inf\{t \geq 0 \ | \ \widetilde{N}_t=0 \}.
$$
Then $\mathbb{P}_{(x,s)}(T_0 < + \infty)=1$ for any starting distribution $(x,s)\in \mathbb{R}_+^2$. Moreover, there exist $C,\alpha>0$ such that for all $(x,s)\in \mathbb{R}_+^2$ and $t\geq 0$,
\begin{equation}
\label{eq:proba-ext}
\mathbb{P}_{(x,s)}(T_0>t) \leq C e^{-\alpha t} (x+s+1).
\end{equation}

\end{theorem}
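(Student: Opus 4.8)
The plan is to exhibit a Lyapunov function for the two-dimensional diffusion $(\widetilde N_t,\widetilde S_t)_{t\ge 0}$ generated by \eqref{eq:EDS_new}, from which both the almost-sure finiteness of $T_0$ and the exponential tail bound \eqref{eq:proba-ext} follow by a standard Foster-Lyapunov / Gronwall argument on $\EE_{(x,s)}[V(\widetilde N_{t\wedge T_0},\widetilde S_{t\wedge T_0})]$. First I would record the generator: writing $\mathcal{A}$ for the generator of \eqref{eq:EDS_new}, for smooth $f$,
\begin{align*}
\mathcal{A}f(x,s)
&=(\mu(s)-D)\,x\,\partial_x f(x,s)
+\Big[D(\Sin-s)-\tfrac{k}{Vn}\,m\,\mu(s)\,x\Big]\partial_s f(x,s)
\\
&\quad+\tfrac12\,(\mu(s)+D)\,x\,\partial_{xx}f(x,s).
\end{align*}
Note that $\widetilde S$ stays in the compact interval $[0,\Sin\vee S_0]$ by the same argument as for $S^n$ (the drift of $\widetilde S$ points inward at the endpoints, using $\mu(0)=0$), and $\widetilde N$ stays nonnegative; so $\mu(\widetilde S_t)$ is bounded by $\widebar\mu:=\max_{0\le s\le \Sin\vee S_0}\mu(s)$ and also bounded below appropriately.

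The natural candidate is $V(x,s)=x+s+1$ (or a close variant such as $x+\lambda s+1$ for a suitable constant $\lambda$, chosen to kill the substrate contribution). Applying $\mathcal{A}$ to $V(x,s)=x+s$ gives
\begin{align*}
\mathcal{A}V(x,s)
&=(\mu(s)-D)\,x+D(\Sin-s)-\tfrac{k}{Vn}\,m\,\mu(s)\,x.
\end{align*}
The term $D(\Sin-s)$ is bounded on the relevant strip, and the issue is the sign of $(\mu(s)-D-\tfrac{k}{Vn}m\mu(s))\,x$, which need not be negative when $\mu(s)$ is large. Here the key observation is that extinction of $\widetilde N$ is driven by the \emph{feedback}: when $\widetilde N$ is large, $\widetilde S$ is driven down (the $-\tfrac{k}{Vn}m\mu(s)x\,\partial_s$ term), and then $\mu(\widetilde S)-D$ becomes negative. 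To capture this I would instead look at a weighted combination: set $W(x,s)=x+c\,s$ with $c>0$; then the $x$-coefficient in $\mathcal{A}W$ is $\mu(s)-D-c\,\tfrac{k}{Vn}m\mu(s)=\mu(s)(1-c\tfrac{k}{Vn}m)-D$, which I would like to be $\le -\alpha<0$ uniformly — but this forces a constraint on $c$ relative to $\widebar\mu$ that may fail. The more robust route, and the one I expect to use, is a \emph{two-regime} argument: on the region where $\mu(\widetilde S_t)\le D-\varepsilon$ the drift of $\log\widetilde N_t$ is negative, so $\widetilde N$ decreases geometrically in expectation; and one shows the process cannot remain forever in the complementary region $\{\mu(\widetilde S)>D-\varepsilon\}$ because there $\widetilde N$ must be bounded (otherwise the $\widetilde S$-drift $D(\Sin-\widetilde S)-\tfrac{k}{Vn}m\mu(\widetilde S)\widetilde N$ is strongly negative, pushing $\widetilde S$ — and hence $\mu(\widetilde S)$ — down), and with $\widetilde N$ bounded the noise $\sqrt{(\mu(\widetilde S)+D)\widetilde N}\,\dif B_t$ drives $\widetilde N$ to $0$ in finite time with positive probability on each time-window. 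Iterating this with a renewal/strong-Markov argument yields $\PP_{(x,s)}(T_0=\infty)=0$, and a geometric bound on the excursion lengths yields the exponential rate.

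Concretely, for the quantitative bound \eqref{eq:proba-ext} I would try to find constants $a,b,\alpha>0$ with $\mathcal{A}V\le -\alpha V+b\,\mathbf{1}_{K}$ for $V(x,s)=x+s+1$ and $K$ a compact set, together with a uniform lower bound on the probability of hitting $0$ within a bounded time starting from $K$ (a small-set / non-degeneracy argument: on $K\cap\{x>0\}$ the diffusion coefficient $(\mu(s)+D)x$ is bounded below, and a comparison with a Feller diffusion hitting $0$ in finite time works; the boundary $\{x=0\}$ is absorbing and attainable). Then Dynkin's formula gives $\EE_{(x,s)}[V(\widetilde N_{t\wedge T_0},\widetilde S_{t\wedge T_0})]\le e^{-\alpha t}V(x,s)+C'$, and combining with the hitting-probability bound on $K$ via the standard argument (e.g. \cite[as in the logistic case]{chazottes2015a}) upgrades this to $\PP_{(x,s)}(T_0>t)\le Ce^{-\alpha t}(x+s+1)$.

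\textbf{Main obstacle.} The delicate point is that there is \emph{no} single smooth Lyapunov function with $\mathcal{A}V\le -\alpha V$ everywhere, precisely because when $\mu(\widetilde S)>D$ the population grows; the drift toward extinction is only felt indirectly through the substrate feedback. So the real work is to make rigorous the two-regime/renewal argument — quantifying how fast $\widetilde S$ (equivalently $\mu(\widetilde S)$) is pushed below $D$ when $\widetilde N$ is large, handling the strong Markov property at the (random) times the process enters the compact set $K$, and getting a uniform-in-starting-point lower bound on the one-step extinction probability from $K$, including the behavior near the absorbing axis $\{\widetilde N=0\}$. Everything else (the stochastic invariance of the strip $[0,\Sin\vee S_0]$ for $\widetilde S$, nonnegativity of $\widetilde N$, boundedness of $\mu$, the Dynkin computation) is routine.
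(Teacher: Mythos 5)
Your overall architecture (a Lyapunov/drift estimate, plus a uniform lower bound on the extinction probability from compact sets, combined by a strong-Markov renewal argument) is exactly the paper's, but there is a genuine gap at the core: you abandon the weighted linear Lyapunov function on the basis of a miscalculation, and the ``two-regime'' argument you substitute for it is precisely the part you leave unproved. In your own computation the coefficient of $x$ in $\mathcal{A}W$ for $W(x,s)=x+c\,s$ is $\mu(s)\bigl(1-c\,\tfrac{k}{Vn}m\bigr)-D$; there is no constraint involving $\widebar{\mu}$ here, because the choice $c=\tfrac{Vn}{km}$ kills the $\mu(s)$ term \emph{identically} and leaves the coefficient equal to $-D$. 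With this choice one gets the exact identity $\mathcal{A}V_0=-D\,V_0$ for $V_0(N,S)=\tfrac{km}{Vn}N+S-\Sin$, i.e.\ $\bigl(e^{Dt}V_0(\widetilde N_t,\widetilde S_t)\bigr)_{t\geq0}$ is a martingale; this is the paper's Lyapunov function, and it makes the whole two-regime/feedback discussion unnecessary. Note also that your fallback candidate $V(x,s)=x+s+1$ does \emph{not} satisfy $\mathcal{A}V\leq-\alpha V+b\mathbf{1}_K$ in general: its $x$-coefficient is $\mu(s)\bigl(1-\tfrac{km}{Vn}\bigr)-D$, which can be positive, so the ``concrete'' Dynkin computation you propose would fail as stated. (Since $V_0$ is comparable to $x+s+1$ outside a compact set, the final bound \eqref{eq:proba-ext} in terms of $x+s+1$ still follows; the paper handles the fact that $V_0$ changes sign by bounding $\EE_{(x,s)}[e^{D\tau_K}]\leq C_0^{-1}V_0(x,s)+B$ for the hitting time $\tau_K$ of a compact set on whose complement $V_0\geq C_0>0$, and then iterating with a H\"older/induction argument.)

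The second ingredient, the uniform lower bound $\inf_{(x,s)\in K}\PP_{(x,s)}(T_0<t_0)>0$, is correctly identified in your plan, but your justification is wrong where it matters: the diffusion coefficient $(\mu(s)+D)x$ is \emph{not} bounded below on $K\cap\{x>0\}$, since $K$ contains points with $x$ arbitrarily close to the absorbing axis. The paper instead sets $U_t=2\sqrt{\widetilde N_t}/\sqrt{\mu(\widetilde S_t)+D}$, which by It\^o's formula has unit diffusion coefficient and a drift that can be dominated uniformly in the substrate variable, compares $U$ with the one-dimensional diffusion $\dif\widetilde Z_t=C(\widetilde Z_t^3-\widetilde Z_t^{-1})\,\dif t+\dif B_t$, and applies Feller's test for explosions to show that $\widetilde Z$ (hence $U$, hence $\widetilde N$) reaches $0$ before a fixed time with probability bounded below on compacts. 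Your appeal to ``a comparison with a Feller diffusion'' is in the right spirit, but as written it does not address the degeneracy at $x=0$, which is exactly where the work lies.
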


\begin{proof}

First, we assume that for every compact set $K\subset\mathbb{R}^2_+$, there exist $t_0,\delta>0$ such that
\begin{equation}
\label{eq:small}
\delta=\inf_{(x,s)\in K} \mathbb{P}_{(x,s)}(T_0 < t_0) >0.
\end{equation}
Secondly, considering $V_0: (N,S) \mapsto \frac{km}{V\,n} N + S-\Sin$ and using \eqref{eq:EDS_new}, $(e^{D t} V_0(\widetilde N_t, \widetilde S_t) )_{t\geq 0}$ is a martingale; namely $V_0$ is a Lyapunov-type function.
From \eqref{eq:small} and the Lyapunov property, it is then classic to prove the statement of the Lemma.
Indeed, shortly, the Lyapunov property entails that, whatever the initial position is, the process converges rapidly in a compact set and then, by \eqref{eq:small}, it will be absorbed in finite time.
This standard argument to prove geometric ergodicity of general Markov processes is given, for instance, by \cite{HM11}. Nevertheless we can not directly apply this theorem because even if $\tilde{S}_t \to \Sin$, it does not reach it, therefore we can not obtain the convergence of $\tilde{S}_t$ to $\Sin$ in total variation.

So let us prove that the Lyapunov property and \eqref{eq:small} are sufficient to ensure \eqref{eq:proba-ext}. Let us fix a compact set $K\subset \RR^2_+$ such that, for all $x\notin K$, $V_0(x)\geq C_0$, for some $C_0>0$. Moreover let us fix the associated $t_0$ and $\delta$ as in \eqref{eq:small}.
 
We divide the proof in two steps.
\subsubsection*{Step 1 : Bound on hitting time}
Let $\tau$ be the hitting time of $K$. Using the stopping-time theorem, for any $n\in \mathbb{N}$ and $(x,s) \notin K$, we have
$$
\mathbb{E}_{(x,s)}[e^{D \tau \wedge n}V_0(\widetilde N_{\tau \wedge n}, \widetilde S_{ \tau \wedge n})] = V_0(x,s).
$$
Then
$$
C_0  \mathbb{E}_{(x,s)}[e^{D (\tau \wedge n)}] \leq V_0(x,s).
$$
Indeed, if $(\widetilde N_{0}, \widetilde S_{0}) \notin K$ then $(\widetilde N_{t}, \widetilde S_{t}) \notin K$ for all $t\leq \tau$ (by definition of $\tau$).
Using the monotone convergence theorem, we have for every $(x,s) \notin K$
\begin{equation*}
\mathbb{E}_{(x,s)}[e^{D \tau}] \leq C_0^{-1}\, V_0(x,s)\,.
\end{equation*}
Moreover, if $(x,s) \in K$, then $\tau=0$ hence
$\mathbb{E}_{(x,s)}[e^{D \tau}] \leq 1$ therefore for any $(x,s)\in \RR_+^2$
\begin{equation}
\label{eq:momentexpo}
\mathbb{E}_{(x,s)}[e^{D \tau}] \leq C_0^{-1}\, V_0(x,s)+B
\end{equation}
with $B=C_0^{-1}\Sin+1$ (because $V_0(x,s) + \Sin \geq 0$).
Then the Markov inequality gives
\begin{equation}
\label{eq:chernoff}
\mathbb{P}_{(x,s)}(\tau \geq t) = \mathbb{P}_{(x,s)}(e^{D \tau} \geq e^{D t}) \leq e^{-D t} \mathbb{E}_{(x,s)}[e^{D \tau}]\leq (C_0^{-1}\, V_0(x,s)+B) \, e^{- D t}\, .
\end{equation}

\subsubsection*{Step 2 : Bound on the extinction time}

\newcommand{\indice}{\ell}

Let  $s_0=0$ and for every $\indice\geq 0$,
$$
\tau_{\indice+1} = \inf\{s\geq s_\indice \ | (\widetilde N_s, \widetilde S_s) \in K  \} - s_\indice
$$
$$
s_{\indice+1} =s_\indice + \tau_{\indice+1} + t_0.
$$
Let $\theta \in [0,1]$, by H\"{o}lder inequality, we have
\begin{align*}
\mathbb{P}_{(x,s)}(T_0>t)
&=\mathbb{P}_{(x,s)}(\widetilde N_{t} \neq 0)
=\sum_{\indice \geq 0} \mathbb{P}_{(x,s)}(  \widetilde N_{t} \neq 0, \ t \in [s_\indice,s_{\indice+1}) )\\
&\leq\sum_{\indice \geq 0} \mathbb{P}_{(x,s)}(  \widetilde N_{s_\indice} \neq 0, \ t \in [s_\indice,s_{\indice+1}) )\\
&\leq \sum_{\indice \geq 0} \mathbb{P}_{(x,s)}(  \widetilde N_{s_\indice} \neq 0)^{\theta}  \mathbb{P}_{(x,s)}(t \in [s_\indice,s_{\indice+1}) )^{1-\theta}.
\end{align*}
On the first hand and if $\indice\geq 1$, by the strong Markov property, Equation \eqref{eq:small} and an induction argument, we have
\begin{align*}
\mathbb{P}_{(x,s)}(  \widetilde N_{s_\indice} \neq 0)
&=\mathbb{E}_{(x,s)}\left[ \mathbf{1}_{\widetilde N_{s_{\indice-1} + \tau_\indice} \neq 0} \mathbb{P}_{(\widetilde N_{s_{\indice-1} + \tau_\indice}, \widetilde S_{s_{\indice-1}+\tau_\indice})}(  \widetilde N_{t_0} \neq 0)\right]\\
&\leq (1-\delta) \mathbb{P}_{(x,s)}(\widetilde N_{s_{\indice-1} + \tau_\indice} \neq 0)
\leq (1-\delta) \mathbb{P}_{(x,s)}(\widetilde N_{s_{\indice-1}} \neq 0)\\
&\leq (1-\delta)^\indice.
\end{align*}
On the other hand, by the Markov property, Equation \eqref{eq:chernoff}, the martingale properties (stopping time theorem on a truncated version of $s_\indice$ and Fatou Lemma) and noting that $\tau=\tau_1$, 
\begin{align*}
\mathbb{P}_{(x,s)}(t \in [s_\indice,s_{\indice+1}) )
&\leq \mathbb{E}_{(x,s)}\left[ \mathbf{1}_{t \geq s_\indice} \mathbb{P}_{(\widetilde N_{s_{\indice}}, \widetilde S_{s_{\indice}})}( \tau \geq t-s_{\indice}  -t_0  \, | \, s_\indice )\right]\\
&\leq \mathbb{E}_{(x,s)}\left[ \mathbf{1}_{t \geq s_\indice} (C_0^{-1}\, V_0(\widetilde N_{s_{\indice}}, \widetilde S_{s_{\indice}})+B) e^{-D (t-s_{\indice}  -t_0)}\right]\\
&\leq C_0^{-1}\, e^{D t_0} e^{-D t} \mathbb{E}_{(x,s)}\left[  V_0(\widetilde N_{s_{\indice}}, \widetilde S_{s_{\indice}}) e^{D s_{\indice} }\right] + B  e^{-D t}  e^{D t_0} \mathbb{E}_{(x,s)}\left[   e^{D s_{\indice}}\right]\\
&\leq  C_0^{-1}\,  e^{D t_0} e^{-D t}  V_0(x,s) + B  e^{-D t}  e^{D t_0} \mathbb{E}_{(x,s)}\left[   e^{D s_{\indice}}\right].
\end{align*}

Moreover by \eqref{eq:momentexpo}, the Markov property, the martingale properties and an induction argument
\begin{align*}
\mathbb{E}_{(x,s)}\left[   e^{D s_{\indice}}\right]
&= e^{D t_0} \mathbb{E}_{(x,s)}\left[   e^{D s_{\indice-1}} \mathbb{E}_{(\widetilde N_{s_{\indice-1}}, \widetilde S_{s_{\indice-1}})}\left[ e^{D \tau} \right]\right]\\
&\leq e^{D t_0} \mathbb{E}_{(x,s)}\left[   e^{D s_{\indice-1}} (C_0^{-1}\, V_0(\widetilde N_{s_{\indice-1}}, \widetilde S_{s_{\indice-1}})+B)\right]\\\
&\leq C_0^{-1}\,  e^{D t_0} V_0(x,s) + B e^{D t_0} \mathbb{E}_{(x,s)}\left[   e^{D s_{\indice-1}}\right]\\
&\leq \left( B e^{D t_0} \right)^\indice \left(1 + \frac{ C_0^{-1}\,  e^{D t_0} V_0(x,s)}{ B e^{D t_0} -1} \right).
\end{align*}

Finally, this gives the existence of a constant $C_1>0$ (which depends on $t_0$ but not on $\theta$) such that 
$$
\mathbb{P}_{(x,s)}(T_0>t)
\leq C_1 e^{-D (1-\theta) t} (C_0^{-1}\,  V_0(x,s) + B) \sum_{\indice \geq 0}  \left((1-\delta)^{\theta} (Be^{Dt_0})^{1-\theta} \right)^\indice.
$$
Choosing $\theta$ sufficiently close to $1$ to guarantee that $(1-\delta)^{\theta} (Be^{Dt_0})^{1-\theta} <1$ ends the proof of \eqref{eq:proba-ext} and then of the statement of the lemma.

\medskip

It remains to prove \eqref{eq:small} to end the proof. Let us introduce
$$
\Psi : (x,s) \mapsto  \frac{2 \sqrt{x}}{\sqrt{\mu(s)+D}}.
$$
We now set $U_t= \Psi(\widetilde{N}_t, \widetilde{S}_t)$ for all $t\geq 0$. This new process hits $0$ at the same time $T_0$ as $(\widetilde{N}_t)_{\geq 0}$  and, using Itô formula, it verifies
\begin{align*}
dU_t 
&=\left[ (\mu(\widetilde{S}_t)-D)\widetilde{N}_t \partial_x \Psi(\widetilde{N}_t, \widetilde{S}_t) + \frac{1}{2} (\mu(\widetilde{S}_t)+D) \widetilde{N}_t \partial_{xx} \Psi(\widetilde{N}_t, \widetilde{S}_t)\right.\\ 
&\left. +  \left(D\,(\Sin-\widetilde S_t)
					-\frac{k}{V\,n}\,m\,\mu(\widetilde S_t)\, \widetilde N_t \right) \partial_{s} \Psi(\widetilde{N}_t, \widetilde{S}_t) \right] dt\\
&+ \sqrt{(\mu(\widetilde{S}_t)+D)\widetilde{N}_t} \partial_x \Psi(\widetilde{N}_t, \widetilde{S}_t) d B_t\\
&= \left[ \frac{1}{2}(\mu(\widetilde{S}_t)-D) U_t - \frac{1}{4 U_t} \right.\\ 
&\left. - \frac{U_t}{2(\mu(\widetilde{S}_t)+D)} \mu'(\widetilde{S}_t) D\,(\Sin-\widetilde S_t)
+ \frac{U_t^3}{8} \mu'(\widetilde{S}_t)   \frac{k}{V\,n}\,m\,\mu(\widetilde S_t)\right] dt\\
&+  d B_t.
\end{align*}
One can then bound the drift term with quantities not depending on the substrate rate and use \cite[Theorem 1.1 chapter VI]{ikeda1981a} to see that $U_t\leq \widetilde{Z}_t$ for every $t\geq 0$, where $(\widetilde{Z}_t)_{t\geq 0}$ is the one-dimensional diffusion solution to
$$
d \tilde{Z}_t = C ( \widetilde{Z}^3_t - \frac{1}{\widetilde{Z}_t}) dt + dB_t
$$
for some constant $C>0$. 
By the Feller's test  for explosions (see \cite[Chapter 5]{KS}), and a monotonicity argument, we deduce that, for all $z_0>0$, there exists $t_0>0$ such that
\begin{equation}
\label{eq:extinctionz}
\inf_{z \leq z_0} \mathbb{P}(\widetilde{T}_0 < t_0\ | \ \widetilde{Z}_0=z) >0,
\end{equation}
where $\widetilde{T}_0=\inf\{t\geq 0  \ | \ \widetilde{Z}_t=0 \}$. More precisely, let $S=\inf\{t\geq 0  \ | \ \widetilde{Z}_t\notin (0, + \infty) \}$ be the exit time from $(0, + \infty)$. The scale function $p$ defined in \cite[Equation (5.42)]{KS} is given by
$$
p: x \mapsto c \int_1^x y^b e^{-a y^4} dy, 
$$
for some $a,b,c>0$, and then $\lim_{x \to 0^+} p(x)$ and $\lim_{x \to + \infty} p(x)$ are clearly finite. Moreover, the function $v$ (defined in \cite[Equation (5.65)]{KS}) verifies
$$
v: x \mapsto \int_1^x p'(y) \int_1^y \frac{2 dz}{ p'(z)} dy= 2\, \int_1^x y^b e^{-a y^4} \int_1^y e^{a z^4} z^{-b} dz dy.
$$
Using standard results of asymptotic analysis, we have 
$$
\int_1^y e^{a z^4} z^{-b} dz  \sim_{ + \infty} e^{a y^4} y^{-b} \times \frac{1}{4a y^3}.
$$
Then as $\int_1^x y^{-3} \dif y <+ \infty$, we have that $\lim_{x \to + \infty} v(x)$ is finite.  
Moreover, for $x \in (0,1)$, we have
$$
0< v(x)= \int_x^1 y^b e^{-a y^4} \int_y^1 e^{a z^4} z^{-b} dz dy \leq e^a \int_x^1 y^b \int_y^1 z^{-b} dz dy \leq   \frac{e^a}{|b-1|} 
$$ then $\lim_{x \to 0^+} v(x)$ is also finite (note that even if the case $b=1$ is not treated in the previous line, it works as well).
As a consequence  by \cite[Proposition 5.32 (i)]{KS}, the stopping time $S$ is finite (and even integrable) and by \cite[Proposition 5.22 (d)]{KS} $\mathbb{P}(S= \widetilde{T}_0 \ | \ \widetilde{Z}_0=z_0)>0$, for every $z_0>0$. 
Consequently, for every $z_0>0$, there exits $t_0>0$ such that $\mathbb{P}( \widetilde{T}_0<t_0 \ | \ \widetilde{Z}_0=z_0)>0$ and then using that for all $z\leq z_0$, 
$$
\mathbb{P}( \widetilde{T}_0 <t_0 \ | \ \widetilde{Z}_0=z) \geq \mathbb{P}( \widetilde{T}_0 <t_0 \ | \ \widetilde{Z}_0=z_0),
$$
we have proved \eqref{eq:extinctionz}. Finally \eqref{eq:small} is a direct consequence of \eqref{eq:extinctionz}.
\end{proof}

\begin{remark}[Quasi-stationary distribution]
Equation \eqref{eq:proba-ext} is a necessary (but not sufficient) condition to ensure existence of a quasi-stationary distribution; see for instance \cite{CMS13}.
\end{remark}

\begin{remark}[Extinction of the Crump-Young model]
It is not difficult to see that \eqref{eq:small} and the Lyapunov property also hold for the Crump-Young model and then \eqref{eq:proba-ext} also holds for this process. In particular this gives a new proof of \cite[Theorem 3.1]{collet2013}. Moreover, in contrast to \cite[Theorem 3.1]{collet2013}, we obtain the speed of extinction \eqref{eq:proba-ext}; furthermore we do not assume any monotonicity on $\mu$. 
\end{remark}

\section*{Aknowledgments}
The authors thank Sylvie Méléard about some discussions on tightness on Hilbert spaces.

This work was partially supported by the Chaire ``Mod\'elisation Math\'ematique et Biodiversit\'e'' of VEOLIA Environment, \'Ecole Polytechnique, Mus\'eum National d'Histoire Naturelle and Fondation X and by the project PIECE (Piecewise Deterministic Markov Processes) of ANR (French national research agency).

\bibliographystyle{apalike}


\end{document}